\newcommand{\catD}{{D}}
\newcommand{\dbc}[1]{{D}^b(#1)}
\newcommand{\dpc}[1]{{D}^+(#1)}
\newcommand{\dmc}[1]{{D}^-(#1)}
\newcommand{\cdbc}[1]{{D}^b_c(#1)}
\newcommand{\fmf}[3]{{\Phi^{#1}_{{\scriptscriptstyle #2\!\rightarrow\! #3}}}}
\newcommand{\what}[1]{{\widehat #1}}
\newcommand{\Ext}{{\operatorname{Ext}}}
\newcommand{\SExt}{{\mathcal{E}xt}}
\newcommand{\lotimes}{{\,\stackrel{\mathbf L}{\otimes}\,}}
\DeclareMathOperator{\Pic}{{Pic}}
\DeclareMathOperator{\Spec}{{Spec}}
\newcommand{\bbQ}{{\mathbb Q}}
\newcommand{\bbZ}{{\mathbb Z}}
\newcommand{\bbP}{{\mathbb P}}
\newcommand{\cE}{{\mathcal E}}
\newcommand{\cF}{{\mathcal F}}
\newcommand{\cG}{{\mathcal G}}
\newcommand{\calH}{{\mathcal H}}
\newcommand{\cI}{{\mathcal I}}
\newcommand{\calL}{{\mathcal L}}
\newcommand{\cM}{{\mathcal M}}
\newcommand{\cO}{{\mathcal O}}
\newcommand{\cT}{{\mathcal T}}
\newcommand{\bR}{{\mathbf R}}
\newcommand{\bL}{{\mathbf L}}
\newcommand{\cplx}[1]{{{\mathcal #1}^{\scriptscriptstyle\bullet}}}
\newcommand{\dSHom}[1]{{\mathcal{H}om_{#1}^{\scriptscriptstyle\bullet}}}
\newcommand{\rk}{\operatorname{rk}}
\newcommand{\marginnote}[1]{\ifthenelse{\isodd{\thepage}}{\normalmarginpar}
{\reversemarginpar}\marginpar{\fbox{\parbox{24mm}{\sloppy\footnotesize #1}}}}
\newcommand{\iso}{{\,\stackrel {\textstyle\sim}{\to}\,}}
\DeclareMathSymbol{\functor}{\mathbin}{AMSa}{"20}
\newtheorem{thm}{Theorem}[section]
\newtheorem*{thm*}{Theorem}
\newtheorem{cor}[thm]{Corollary}
\newtheorem{lem}[thm]{Lemma}
\newtheorem{prop}[thm]{Proposition}
\theoremstyle{definition}
\newtheorem{defin}[thm]{Definition}
\theoremstyle{remark}
\newtheorem{rem}[thm]{Remark}
\newtheorem{exe}[thm]{Example}
\numberwithin{equation}{section} 
\begin{document}

\title[Semistable sheaves on singular curves]{Moduli spaces of semistable sheaves on singular genus 1 curves}
\author[Hern\'andez Ruip\'erez]{Daniel Hern\'andez Ruip\'erez}
\author[L\'opez]{Ana Cristina L\'opez Mart\'{\i}n}
\author[S\'anchez]{Dar\'{\i}o S\'anchez G\'omez}
\author[Tejero]{Carlos Tejero Prieto}
\email{ruiperez@usal.es, anacris@usal.es, dario@usal.es, carlost@usal.es}
\address{Departamento de Matem\'aticas and Instituto Universitario de F\'{\i}sica Fundamental y Matem\'aticas
(IUFFyM), Universidad de Salamanca, Plaza de la Merced 1-4, 37008
Salamanca, Spain.}

\date{\today}
\thanks {Work supported by research projects MTM2006-04779 (MEC), SA001A07 (JCYL) and GR46 (JCYL). A.C.~L\'opez is also partially supported by a ``2007-Jos\'e
Castillejo'' grant of the Spanish Ministry for Education and Science.
D. ~S\'anchez acknowledges as well support from the European Union through the FP6
Marie Curie RTN \emph{ENIGMA} (Contract number MRTN-CT-2004-5652)} 
\subjclass[2000]{Primary: 18E30; Secondary:
14F05, 14J27, 14E30, 13D22, 14M05} \keywords{Geometric integral
functors, Fourier-Mukai, Cohen-Macaulay, fully faithful, elliptic
fibration, equivalence of categories, moduli, singular curves}

\begin{abstract}
 We find some equivalences of the derived category of coherent sheaves on a Gorenstein genus one curve that preserve the (semi)-stability of pure dimensional sheaves. Using them we establish new identifications between certain Simpson moduli spaces of semistable sheaves on the curve.  For rank zero, the moduli spaces are symmetric powers of the curve whilst for a fixed positive rank there are only a finite number of non-isomorphic spaces. We prove similar results for the relative semistable moduli spaces on an arbitrary genus one fibration with no conditions either on the base or on the total space. For a cycle $E_N$ of projective lines, we show that the unique degree 0 stable sheaves are the line bundles having degree 0 on every irreducible component and the sheaves $\mathcal{O}(-1)$ supported on one  irreducible component. We also prove that the connected component of the moduli space that contains vector bundles of rank $r$ is isomorphic to the $r$-th symmetric product of the rational curve with one node.
\end{abstract}

\maketitle



\section*{Introduction}
Elliptic fibrations have been used in string theory, notably in
connection with mirror symmetry on Calabi-Yau manifolds and
D-branes. The study of relative moduli spaces of semistable sheaves
on elliptic fibrations, aside from its mathematical importance,
provides a geometric background to string theory. In the case of
integral elliptic fibrations, a complete description is  already
known and among the papers considering the problem we can cite
\cite{BBHM98,Bri98,BrM02,HMP02}.
A study of these relative spaces for a more general class of genus
one fibrations (for instance, with non-irreducible fibers and even
singular total spaces) turns out to be an interesting problem.

On the one hand, for sheaves of rank 1 a fairly complete study of a
class of these moduli spaces (compactified relative Jacobians),
including those associated to relatively minimal elliptic surfaces,
was carried out by one of the authors in \cite{LM05,LM05a} (see also \cite{Ca00, Ca2}). On the
other hand, nowadays it is well understood the efficient key idea of
the ``spectral cover construction'' discovered for the first time by
Friedman-Morgan-Witten in \cite{FMW99} and widely used later by many
authors. The method shows how useful is the theory of integral
functors and Fourier-Mukai transforms in the problem. The study
developed by two of the authors in \cite{HLS07} and \cite{HLS08} on
relative integral functors for singular fibrations gives a new
insight in this direction.

From the results in that paper one gets new information about
moduli spaces of relative semistable sheaves of higher rank for a
{\it genus one fibration} $p\colon S\to B$, that is, a projective
Gorenstein morphism whose fibers are curves of arithmetic genus one
and trivial dualizing sheaf but without further assumptions on $S$
or $B$.
The fiber of the relative moduli
space over a point $b\in B$ is just the absolute moduli space of
semistable sheaves on $S_b$, so that in order to start with the
relative problem one has to know in advance the structure of the
absolute moduli spaces for the possible degenerations of an elliptic
curve. There are some cases where the structure of the singular
fibers is known. For smooth elliptic surfaces over the complex
numbers, the classification was given by Kodaira  \cite{kod} and for
smooth elliptic threefolds over a base field of characteristic
different from 2 and 3, they were classified by Miranda
\cite{Mir83}. In both cases, the possible singular fibers are plane
curves of the same type, the so-called Kodaira fibers. Nevertheless,
in a genus one fibration non-plane curves can appear as
degenerated fibers. So that our genus one fibrations may have
singular fibers other than the Kodaira fibers. The study of the moduli spaces of vector bundles on smooth elliptic curves dates back to Atiyah \cite{At57} and Tu \cite{Tu94a}, who proved that for an elliptic curve $X$ there is an isomorphism $\cM(r,d)\equiv Sym^{m} X$, where $m=\gcd(r,d)$, between the moduli space of semistable sheaves of rank $r$ and degree $d$ and the symmetric product of the curve. A very simple way to prove this isomorphism is by using Fourier-Mukai transforms (cf.~\cite{Pol03,HePl05}). This method has been generalized to irreducible elliptic curves (i.e., rational curves with a simple node or cusp) in \cite[Chapter 6]{BBH08}) obtaining that $\cM(r,d)\equiv Sym^{m} X$, where $m=\gcd(r,d)$ also in this case.
In the case of singular curves, the moduli spaces of semistable torsion free sheaves were first constructed and studied by Seshadri \cite{Ses82}; his construction can now be seen as a particular case of the general construction of the moduli spaces of semistable pure sheaves due to Simpson \cite{Simp96a}. The properties of these moduli spaces and their degeneration properties have been studied by many authors (see, for instance, \cite{Se00,NaSe97,NaSe99,Ca00, Ca2,LM05,LM05a}).

The paper is divided in two parts.  In the first part, we
consider $X$ an arbitrary Gorenstein genus one curve with trivial
dualizing sheaf. The group of all integral functors that are exact
autoequivalences of $D_c^b(X)$ is still unknown and  a criterion
characterizing those Fourier- Mukai transforms that preserve
semistability for a non-irreducible curve of arithmetic genus 1
seems to be a difficult problem.  Here we find some equivalences of
its derived category $D_c^b(X)$ of coherent sheaves that preserve
the (semi)-stability of pure dimensional sheaves. One is  given by
the ideal of the diagonal and the other is provided by twisting by
an ample line bundle (see Theorem \ref{t:preservation}).  Our proof
follows the ideas in \cite{Bri98,Pol03} where the result was proved for a
smooth elliptic curve and in \cite{BBH08} for an irreducible singular curve of arithmetic genus one. The results of this section allow to ensure
that for rank zero, the moduli spaces are the symmetric powers of
the curve whilst for a fixed positive rank there are only a finite number of
non-isomorphic moduli spaces (see Corollary \ref{c:isom}). Unlike
the case of a smooth curve, moduli spaces of semistable
sheaves on a curve with many irreducible components are not
normal (even in the case where the rank is 1). Its structure depends
very strongly on the particular configuration of every single curve.
The difficulty in determining the stability conditions for a sheaf in
this case points out the relevance of the identifications of
Corollary \ref{c:isom}. In fact, for a curve with two irreducible
components endowed with a polarization of minimal degree, they
reduce the study either to the case of rank 0 or degree 0. Coming
back to the relative case, the section finishes with Corollary
\ref{c:isom2} which establishes new identifications between certain
relative Simpson moduli spaces of (semi)stable sheaves for a genus
one fibration.

In the second part, we focus our study in a curve of type $E_N$ and
in the case of degree 0 which is particularly interesting as in this
case  semistability does not depend on the polarization. Proposition
\ref{p:grupoK} computes the Grothendiek group of coherent sheaves
for any reduced connected and projective curve whose  irreducible
components are isomorphic to $\mathbb{P}^1$.
The discrete invariants corresponding to the
Grothendieck group behave well with respect to Fourier-Mukai
transforms and are important tools for the analysis of the moduli spaces.
Although a description of all torsion-free sheaves on a
cycle of projective lines $E_N$ is known, as we mentioned above, it
is by no means a trivial problem to find out which of them are
semistable. For instance, contrary to what happens for an elliptic
curve, semistability is not guaranteed by the simplicity of the
sheaf. For $E_1$, that is, a rational curve with one node, this was
done in \cite{BuKr04} for the degree zero case and in \cite{BuKr05}
otherwise. Using the description of indecomposable torsion-free
sheaves on $E_N$ given in \cite{BBDG07} and the study of semistable
torsion-free sheaves on $E_N$ and on tree-like curves of
\cite{LM05}, Theorem \ref{t:stables} proves that a degree 0
stable sheaf is either a line bundle having degree 0 on
every irreducible component of $X$ or $\cO_{C_i}(-1)$ for some
irreducible component $C_i$. Then Corollary \ref{c:JHfactors} gives
the possible Jordan-Holder factors of any degree 0 semistable sheaf.
In the integral case, if the sheaf is indecomposable all
Jordan-H\"older factors are isomorphic to each other. This is no longer the case for cycles of projective lines. Proposition \ref{p:graded} computes the graded object of any indecomposable semistable sheaf of degree 0.
The structure of the connected component of
the moduli space that contains vector bundles of rank $r$ is given
in Theorem \ref{t:sym}. Namely, it is isomorphic to the $r$-th
symmetric product $Sym^r E_1$ of the rational curve with one node.
Having studied the case of degree zero, the results of the first
part of the paper allow to cover other cases (see Remark
\ref{r:othercases}). In particular, the connected component of
the moduli space that contains vector bundles of rank $r$ and degree $rh$, where $h$ is the degree of the polarization, is also isomorphic to $Sym^r E_1$.

In this paper, scheme means separated  scheme of finite type over an
algebraically closed field $k$ of characteristic zero. By a
Gorenstein or a Cohen-Macaulay morphism, we understand a \emph{flat}
morphism of schemes whose fibers are respectively Gorenstein or
Cohen-Macaulay. For any scheme $X$ we denote by $\catD(X)$ the
derived category of complexes of $\cO_X$-modules with quasi-coherent
cohomology sheaves. This is the essential image of the derived
category of quasi-coherent sheaves in the derived category
$\catD(\mathfrak{Mod}(X))$ of all $\cO_X$-modules
\cite[Corollary~5.5]{BoNee93}. Analogously $\dpc{X}$, $\dmc{X}$ and
$\dbc{X}$ denote the derived categories of complexes which are
respectively bounded below, bounded above and bounded on both sides,
and have quasi-coherent cohomology sheaves. The subscript $c$ will
refer to the corresponding subcategories of complexes with coherent
cohomology sheaves. By a point we always mean a closed point. As it
is usual, if $x\in X$ is a point, $\mathcal{O}_x$ denotes the
skyscraper sheaf of length 1 at $x$, that is, the structure sheaf of
$x$ as a closed subscheme of $X$, while the stalk of $\mathcal{O}_X$
at $x$ is denoted $\mathcal{O}_{X,x}$.

\subsection*{Acknowledgements} We thank I. Burban for pointing out a mistake in the first version of this paper and for showing us the example of a simple not WIT sheaf described in Remark \ref{rem:simplenotwit}. We also thank C.S. Seshadri for drawing to our attention Strickland's result \cite{Str82} which implies Lemma \ref{lem:reduced}, and U.N. Bhosle for pointing out some inaccuracies and mistakes.
The second and the third author
would like to thank respectively the Warwick Mathematics Institute and the
Mathematical Institute of Oxford for hospitality and very stimulating
atmosphere whilst this paper was written.

\section{Fourier-Mukai transforms preserving stability}
\subsection{A non-trivial  Fourier-Mukai transform on genus one curves}

Let $X$ and $Y$ be proper schemes. We denote the two projections of
the direct product $X\times Y$ to $X$ and $Y$ by $\pi_X$ and
$\pi_Y$.

Let $\cplx{K}$ be an object in $\cdbc{X\times Y}$. The integral
functor of kernel $\cplx{K}$ is the functor $\fmf{\cplx K}{X}{Y}
\colon \catD (X) \to \catD (Y)$ defined as
$$\fmf{\cplx K}{X}{Y}(\cplx{F})=\bR \pi_{Y\ast}(
\pi_X^\ast\cplx{F}\lotimes \cplx{K})$$ and it maps $\dmc{X}$ to
$\dmc{Y}$.

In order to determine whether an integral functor maps bounded
complexes to bounded complexes, the following notion was introduced
in \cite{HLS07}.

\begin{defin} Let $f\colon Z \to T$ be a morphism of
schemes. An object $\cplx E$ in $\cdbc{Z}$ is said to be of
\emph{finite homological dimension over $T$}  if $\cplx E\lotimes
\bL f^\ast \cplx G$ is bounded for any $\cplx{G}$ in $\cdbc{T}$.
\end{defin}

The proof of the following lemma can also be found in
\cite[Proposition~2.7]{HLS08}.

\begin{lem} \label{p:kernelfhd}
Assume that $X$ is a projective scheme and let $\cplx{K}$ be an
object in $\cdbc{X\times Y}$. The functor $\fmf{\cplx{K}}{X}{Y}$
maps $\cdbc{X}$ to $\cdbc{Y}$ if and only if $\cplx{K}$ has finite
homological dimension over $X$. \end{lem}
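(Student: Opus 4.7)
The plan is to handle the two implications separately. The reverse direction is essentially formal: if $\cplx{K}$ has finite homological dimension over $X$, then for any $\cplx{F}\in\cdbc{X}$ the complex $\pi_X^\ast\cplx{F}\lotimes\cplx{K}$ is bounded and coherent on $X\times Y$. Since $X$ is projective, $\pi_Y$ is proper with fibres all isomorphic to $X$, hence of uniformly bounded cohomological dimension; therefore $\bR\pi_{Y\ast}$ sends $\cdbc{X\times Y}$ into $\cdbc{Y}$, and $\fmf{\cplx{K}}{X}{Y}(\cplx{F})$ lies in $\cdbc{Y}$.

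For the direct implication, I would test the functor on the structure sheaves of closed points $x\in X$. Writing $j_x\colon\{x\}\times Y\hookrightarrow X\times Y$ for the closed immersion and using flatness of $\pi_X$, one has $\bL\pi_X^\ast\cO_x=\pi_X^\ast\cO_x\simeq j_{x\ast}\cO_{\{x\}\times Y}$, and the projection formula for $j_x$ yields
\[
\pi_X^\ast\cO_x\lotimes\cplx{K}\simeq j_{x\ast}\bL j_x^\ast\cplx{K}.
\]
Composing with $\bR\pi_{Y\ast}$ and using that $\pi_Y\circ j_x$ is an isomorphism $\{x\}\times Y\iso Y$, one identifies $\fmf{\cplx{K}}{X}{Y}(\cO_x)$ with the derived restriction $\bL j_x^\ast\cplx{K}$. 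The hypothesis on $\fmf{\cplx{K}}{X}{Y}$ then forces $\bL j_x^\ast\cplx{K}\in\cdbc{Y}$, so the derived fibre of $\cplx{K}$ over every closed point of $X$ is bounded.

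The remaining, and main, step is to promote this pointwise statement to the uniform Tor-amplitude condition appearing in the definition of finite homological dimension over $X$. For this I would invoke the standard local criterion in commutative algebra: a bounded coherent complex on a Noetherian scheme has finite Tor amplitude relative to $\pi_X$ as soon as each derived fibre $\cplx{K}\lotimes^{\bL}_{\pi_X^{-1}\cO_X}k(x)$ is bounded; a uniform bound on the amplitude then follows from upper semicontinuity of Tor amplitude on the Noetherian scheme $X$ together with its quasi-compactness. Once such a uniform bound is in place, $\cplx{K}\lotimes\bL\pi_X^\ast\cplx{G}$ is bounded for every $\cplx{G}\in\cdbc{X}$, as required.

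The main obstacle I expect is precisely this final upgrade from pointwise to uniform Tor amplitude; the projection-formula computation and the reverse direction are essentially formal. An alternative is to work locally with a Cartan--Eilenberg or a locally free $\pi_X^{-1}\cO_X$-resolution of $\cplx{K}$ and argue directly on affines, but the semicontinuity route seems the cleanest.
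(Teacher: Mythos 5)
The paper does not actually prove this lemma; it defers entirely to \cite[Proposition~2.7]{HLS08}. Your argument is nonetheless the standard one and, as far as the visible structure goes, the same route as the cited source: the ``if'' direction is formal from properness of $\pi_Y$ (which holds because $X$ is proper), and for the ``only if'' direction one tests on skyscrapers and identifies $\fmf{\cplx{K}}{X}{Y}(\cO_x)$ with the derived fibre $\bL j_x^\ast\cplx{K}$ via flatness of $\pi_X$ and the projection formula, exactly as you do. The only place that deserves more care is the step you yourself flag as the crux: passing from boundedness of every derived fibre to finite homological dimension over $X$. Your phrase ``the standard local criterion'' hides the fact that the cohomology sheaves of $\cplx{K}$ are coherent on $X\times Y$ but \emph{not} finite over $\cO_X$, so one cannot quote the finite-module statement directly. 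The correct unpacking is: (i) since $\pi_X$ is flat and of finite type, the local criterion of flatness applied to the syzygies of a local resolution by finite free $\cO_{X\times Y}$-modules (which are $\cO_X$-flat) shows that vanishing of $\Tor_i^{\cO_{X,x}}(\cplx{K},k(x))$ for $i\gg0$ forces the relevant syzygy to be $\cO_{X,x}$-flat at every point of the fibre over $x$; and (ii) the uniform bound over $x$ follows either from openness of the flat locus in $X\times Y$ together with properness of $\pi_X$ (push the closed non-flat locus down to a closed subset of $X$ missing $x$ --- this is the precise form of the ``semicontinuity on $X$'' you invoke), or more cheaply from the fact that any module of finite flat dimension over a noetherian local ring has flat dimension at most the dimension of that ring, which is uniformly bounded by $\dim X$. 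With either of these supplied, your proof is complete and correct.
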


Let us suppose that $X$ is a projective Gorenstein curve with
arithmetic genus $\dim H^1(X, \cO_{X})=1$ such that its dualizing
sheaf is trivial. This includes all the so-called Kodaira fibers, that
is, all singular fibers of a smooth elliptic surface over the
complex numbers (classified by Kodaira in \cite{kod}) and of a
smooth elliptic threefold over a base field of characteristic
different from 2 and 3 (classified by Miranda in \cite{Mir83}). In
these two cases, all fibers are plane curves.
Here, we do not
need to assume that our curve $X$ is a plane curve.
Notice also that an irreducible curve of arithmetic genus one has always trivial dualizing sheaf, but this is no longer true for reducible curves. Therefore in \cite{HLS07} a genus one fibration is defined as a projective Gorenstein morphism $p\colon S\to B$ whose fibers have arithmetic genus one and trivial dualizing sheaf.

Using the theory of spherical objects by Seidel and Thomas
\cite{SeTh01}, we have the following

\begin{prop} Let $X$ be a projective Gorenstein curve with
arithmetic genus 1 and trivial dualizing sheaf. Let $\cI_\Delta$ be
the ideal sheaf of the diagonal immersion $\delta\colon
X\hookrightarrow X\times X$. One has:
\begin{enumerate}
\item The ideal sheaf $\cI_\Delta$ is an object in $\cdbc{X\times
X}$ of finite homological dimension over both factors.
\item The functor $\Phi=\fmf{\cI_{\Delta}}{X}{X}\colon \cdbc{X}\to
\cdbc{X}$ is an equivalence of categories.
\item The integral functor
$\widehat{\Phi}=\fmf{\cI_{\Delta}^\ast}{X}{X}\colon \cdbc{X}\to
\cdbc{X}$, where $\cI_{\Delta}^\ast$ is the dual sheaf, is a shift of
the quasi-inverse of $\Phi$ with $\widehat{\Phi}\circ \Phi\simeq
[-1]$ and $\Phi\circ \widehat\Phi\simeq [-1]$.
\end{enumerate}
\end{prop}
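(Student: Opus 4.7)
My plan is to derive all three items from the Seidel--Thomas spherical twist machinery applied to the structure sheaf $\cE=\cO_X$. First, I would verify that $\cO_X$ is a $1$-spherical object: $\bR\Hom_X(\cO_X,\cO_X)=\bR\Gamma(X,\cO_X)$ has one-dimensional $H^0$ and one-dimensional $H^1$ by the genus $1$ hypothesis, while the Calabi--Yau condition $\cO_X\otimes\omega_X\simeq \cO_X$ is exactly the assumed triviality of the dualizing sheaf. Second, the defining short exact sequence
$$0\to \cI_\Delta\to \cO_{X\times X}\to \cO_\Delta\to 0$$
yields the exact triangle identifying $\cI_\Delta[1]$ with the cone of the natural evaluation map $\cO_X^\ast\boxtimes \cO_X=\cO_{X\times X}\to \cO_\Delta$. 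Since the kernel of the spherical twist $T_{\cO_X}$ is by construction precisely this cone, one has $\Phi=\fmf{\cI_\Delta}{X}{X}\simeq T_{\cO_X}[-1]$, and item (2) follows as soon as the Seidel--Thomas theorem is available in the Gorenstein setting.

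For (1) I would exploit the same short exact sequence. Both projections $\pi_i\colon X\times X\to X$ are flat, so $\cO_{X\times X}$ trivially has finite homological dimension over $X$. For $\cO_\Delta$ the identity $\pi_i\circ \delta=\Id_X$ combined with the projection formula gives
$$\cO_\Delta\lotimes \bL\pi_i^\ast\cplx G \simeq \delta_\ast\cplx G$$
for every $\cplx G\in\cdbc X$, which is visibly bounded. The octahedral axiom then transfers finite homological dimension to $\cI_\Delta$, and Lemma \ref{p:kernelfhd} ensures that $\Phi$ actually maps $\cdbc X$ into itself.

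For (3) I would invoke the adjunction formula for integral functors on a projective Gorenstein scheme: in our situation (relative dimension $1$, trivial dualizing sheaf), the right adjoint of $\fmf{\cplx K}{X}{X}$ has kernel $\cplx K^\ast\otimes \pi_X^\ast\omega_X[1]=\cplx K^\ast[1]$. Applied to $\cplx K=\cI_\Delta$ this says the right adjoint, which by (2) coincides with the quasi-inverse, is $\fmf{\cI_\Delta^\ast}{X}{X}[1]=\widehat\Phi[1]$. Equivalently $\widehat\Phi\simeq \Phi^{-1}[-1]$, whence the two composition formulas $\widehat\Phi\circ\Phi\simeq [-1]$ and $\Phi\circ\widehat\Phi\simeq [-1]$ follow at once.

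The main obstacle I anticipate is that the original Seidel--Thomas theorem and the Fourier--Mukai adjunction formula are stated for smooth ambient varieties, whereas $X$ here is only projective Gorenstein. I would therefore rely on their singular-setting extensions developed by the authors in \cite{HLS07,HLS08}: the Gorenstein hypothesis together with triviality of $\omega_X$ is precisely what is needed to ensure that $\bR\SHom(\cI_\Delta,\cO_{X\times X})$ is concentrated in degree $0$, so that $\cI_\Delta^\ast$ is a genuine coherent sheaf, and that Grothendieck--Serre duality on $X$ takes its familiar smooth form. Once these ingredients are in place the three statements follow by the formal arguments above.
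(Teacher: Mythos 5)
Your proposal is correct and follows essentially the same route as the paper: the short exact sequence $0\to \cI_\Delta\to\cO_{X\times X}\to\delta_\ast\cO_X\to 0$ plus the projection formula for (1), the identification $\Phi\simeq T_{\cO_X}[-1]$ with the Seidel--Thomas twist along the spherical object $\cO_X$ for (2), and the adjoint-kernel formula of \cite{HLS08} for (3). The one step you assert rather than verify is that $\bR\SHom_{\cO_{X\times X}}(\cI_\Delta,\cO_{X\times X})$ is concentrated in degree $0$; the paper proves this by taking $\SHom(-,\cO_{X\times X})$ of the short exact sequence and using that $\SExt^i_{\cO_{X\times X}}(\delta_\ast\cO_X,\cO_{X\times X})$ vanishes for $i>1$ because $X$ is Gorenstein (note this uses only the Gorenstein hypothesis, not the triviality of $\omega_X$, which enters solely in removing the twist by $\pi_X^\ast\omega_X$ from the adjoint kernel).
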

\begin{proof}
(1) Denote by $\pi_i\colon X\times X\to X$ with $i=1,2$ the two
projections. By the symmetry, to see that $\cI_\Delta$ is of finite
homological dimension over both factors it is enough to prove it
over the first one. Using the exact sequence $$0\to \cI_\Delta\to
\cO_{X\times X}\to \delta_\ast \cO_X\to 0\, ,$$ it suffices to see
that $\delta_\ast \cO_X$ has finite homological dimension over the
first factor. We have then to prove that for any bounded complex
$\cplx{F}$  on $X$, the complex $\delta_\ast \cO_X\otimes \pi_1^\ast
\cplx{F}$ is also a bounded complex and this follows from the
projection formula for $\delta$.

(2) Since $X$ is a projective Gorenstein curve of genus one and
trivial dualizing sheaf, $\cO_X$ is a spherical object of
$\cdbc{X}$. By \cite{SeTh01} the twisted functor $T_{\cO_{X}},$
along the object $\cO_X$, is an equivalence of categories. Since
$\Phi\simeq T_{\cO_{X}}[-1]$, the statement follows.

(3) By \cite[Proposition~2.9]{HLS08}, the functor
$\Phi^{\cI_{\Delta}^\vee[1]}_{X\to X}$ is the right adjoint to
$\Phi$ where $\cI_{\Delta}^\vee=\bR \dSHom{\cO_{X\times
X}}(\cI_{\Delta},\cO_{X\times X})$ is the dual in the derived
category, then it is enough to prove that $\cI_{\Delta}^\vee$ is
isomorphic to $\cI_{\Delta}^\ast$, the ordinary dual. Indeed, one
has to check that $\SExt^i_{\cO_{X\times X}}(\cI_{\Delta},
\cO_{X\times X})=0$ for $i\ge 1$. Let us consider the exact sequence
$$
0\to \cI_{\Delta} \to \cO_{X\times X} \to \delta_\ast\cO_{X}\to 0 \,
.$$ Taking local homomorphisms in $\cO_{X\times X}$, we get an exact
sequence
\begin{equation*}
0\to\cO_{X\times X} \to \cI_{\Delta}^\ast \to \SExt^1_{\cO_{X\times
X}}(\delta_\ast\cO_{X}, \cO_{X\times X}) \to 0
\end{equation*}
and isomorphisms
$$\SExt^{i-1}_{\cO_{X\times X}}(\cI_{\Delta}, \cO_{X\times X}) \simeq  \SExt^{i}_{\cO_{X\times X}}(\delta_\ast\cO_{X}, \cO_{X\times X})
\qquad \text{for all $i>1$,}
$$
which proves our claim because $X$ is Gorenstein.
\end{proof}

We shall use the following notation: for any integral functor
$ \mathbf{S}\colon \cdbc{X}\to \cdbc{X}$, $ \mathbf{S}^j$ denotes the $j$-th
cohomology sheaf of $ \mathbf{S}$, unless confusion can arise. Remember
that a sheaf $\cE$ on $X$ is said to be WIT$_i$-$ \mathbf{S}$ if
$ \mathbf{S}(\cE)\simeq \mathbf{S}^i(\cE)[-i]$. In this case, we denote the unique
non-zero cohomology sheaf $ \mathbf{S}^i(\cE)$ by $\widehat{\cE}$.

Note that in our particular situation $ \mathbf{S}=\Phi$, since $\cI_{\Delta}$ is flat
over the first factor and the fibers of $\pi_2$ are of dimension
one, for any sheaf $\cE$ on $X$ one has $\Phi^j(\cE)=0$ unless
$0\leq j\leq 1$.

We now adapt to our case some well-known properties about
WIT sheaves.

\begin{prop}\label{prop:properties RFM}
The following results hold:
\begin{enumerate}
 \item There exists a Mukai spectral
sequence
\begin{equation*}
E_2^{p,q}=\widehat\Phi^p(\Phi^q(\cE)) \implies
\begin{cases}
\cE & \text{if $p+q=1$} \\
0 & \text{otherwise\,.}
\end{cases}
\end{equation*}
\item Let $\cE$ be a
WIT$_i$-$\Phi$ sheaf on $X$. Then $\widehat{\cE}$ is a
WIT$_{1-i}$-$\widehat{\Phi}$ sheaf on $X$ and
$\widehat{\what{\cE}\,}=\cE.$
\item For every sheaf $\cE$ on $X$, the sheaf $\Phi^0(\cE)$ is
WIT$_1$-$\widehat\Phi,$ while the sheaf $\Phi^1(\cE)$ is
WIT$_0$-$\widehat\Phi$.

\item There exists a short exact sequence
$$0\longrightarrow\what\Phi^1(\Phi^0(\cE))\longrightarrow\cE\longrightarrow\widehat\Phi^0(\Phi^1(\cE))\longrightarrow 0\,.$$
\end{enumerate}
\end{prop}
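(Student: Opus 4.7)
The plan is to derive all four statements from the relation $\widehat\Phi\circ\Phi\simeq[-1]$ (and its symmetric version) established in the preceding proposition, together with the cohomological-amplitude bound $\Phi^j(\cE)=0$ for $j\notin\{0,1\}$ (and the same for $\widehat\Phi$) noted in the excerpt.

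First I would handle (1) by invoking the Grothendieck spectral sequence for the composition of functors. Since $\Phi$ and $\widehat\Phi$ are derived functors and $\Phi(\cE)$ is a bounded complex of coherent sheaves, one has a convergent spectral sequence
\begin{equation*}
E_2^{p,q}=\widehat\Phi^p(\Phi^q(\cE))\Longrightarrow \calH^{p+q}(\widehat\Phi\circ\Phi(\cE)).
\end{equation*}
The abutment is computed by the isomorphism $\widehat\Phi\circ\Phi\simeq[-1]$: the complex $\cE[-1]$ has its only nonzero cohomology sheaf, equal to $\cE$, in degree $1$. This is exactly the stated abutment.

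Next I would derive (3) from the vanishing of the abutment in degrees $\neq 1$, combined with the amplitude bounds. In total degree $0$, the only $E_2$-term is $E_2^{0,0}=\widehat\Phi^0(\Phi^0(\cE))$; since the differentials $d_r$ land in $E_2^{r,1-r}$, all of which vanish because of the amplitude restrictions $p,q\in\{0,1\}$, one has $E_\infty^{0,0}=E_2^{0,0}$, which must be $0$. This means $\Phi^0(\cE)$ is WIT$_1$-$\widehat\Phi$. Symmetrically, in total degree $2$ the only possibly nonzero $E_2$-term is $E_2^{1,1}=\widehat\Phi^1(\Phi^1(\cE))$, all differentials in and out vanish for the same amplitude reason, and the abutment is $0$, so $\Phi^1(\cE)$ is WIT$_0$-$\widehat\Phi$. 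Once (3) is in hand, (4) is immediate: in total degree $1$ the only surviving terms of the spectral sequence are $E_2^{1,0}=\widehat\Phi^1(\Phi^0(\cE))$ and $E_2^{0,1}=\widehat\Phi^0(\Phi^1(\cE))$, the differentials involving them target or originate from the vanishing groups above, so $E_\infty=E_2$ on the diagonal $p+q=1$, and the corresponding two-step filtration on the abutment $\cE$ reads
\begin{equation*}
0\longrightarrow \widehat\Phi^1(\Phi^0(\cE))\longrightarrow \cE\longrightarrow \widehat\Phi^0(\Phi^1(\cE))\longrightarrow 0.
\end{equation*}

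Finally, for (2) I would argue directly. If $\cE$ is WIT$_i$-$\Phi$, then $\Phi(\cE)\simeq\widehat{\cE}[-i]$ in $\cdbc{X}$. Applying $\widehat\Phi$ and using $\widehat\Phi\circ\Phi\simeq[-1]$ yields
\begin{equation*}
\cE[-1]\simeq \widehat\Phi(\Phi(\cE))\simeq \widehat\Phi(\widehat{\cE})[-i],
\end{equation*}
so $\widehat\Phi(\widehat{\cE})\simeq \cE[i-1]=\cE[-(1-i)]$, which both expresses that $\widehat{\cE}$ is WIT$_{1-i}$-$\widehat\Phi$ and identifies $\widehat{\widehat{\cE}}$ with $\cE$. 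The mildly tricky point — and really the only one to be careful about — is verifying that the spectral sequence in (1) is genuinely bounded and convergent in our singular, non-smooth setting; this is why item (3) is phrased purely in terms of sheaf cohomology and why the amplitude facts $\Phi^j=\widehat\Phi^j=0$ for $j\notin\{0,1\}$ are crucial, since they collapse all higher differentials automatically and guarantee that every arrow $d_r$ with $r\ge 2$ originating or landing on a term in the range $p+q\le 2$ hits a zero group.
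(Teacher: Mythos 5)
Your proof is correct and takes essentially the same route as the paper: the authors simply cite \cite{BBH08} for (1) and (2) and then read off (3) and (4) from the Mukai spectral sequence exactly as you do, so you are in effect supplying the details that the paper outsources to that reference. One small terminological quibble: the spectral sequence in (1) is not really a Grothendieck composite-functor spectral sequence but the hyper-derived (truncation-filtration) spectral sequence of the exact functor $\widehat\Phi$ applied to the bounded complex $\Phi(\cE)$; since both functors have cohomological amplitude contained in $[0,1]$, existence and convergence are automatic, as your amplitude argument already makes clear.
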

\begin{proof}
(1) and (2) follow from \cite[Eq.~2.35 and Prop.~2.34]{BBH08}. (3) is a direct consequence of (1) and (4) is the exact sequence of lower terms of the Mukai spectral sequence.
\end{proof}
\subsection{Preservation of the absolute stability for some equivalences}

\subsubsection{Pure sheaves and Simpson stability}
A notion of stability and semistability for pure sheaves on a
projective scheme with respect to an ample divisor was given by Simpson in \cite{Simp96a}. He also proved the existence of the corresponding coarse moduli spaces.

Let $X$ be a projective scheme of dimension $n$ over an
algebraically closed field $k$ of characteristic zero and fix $H$ a
polarization, that is, an ample divisor on $X$. For any coherent
sheaf $\cE$ on $X$, denote $\cE(sH)=\cE\otimes \cO_X(sH)$.

The {\it Hilbert polynomial} of $\cE$ with respect to $H$ is defined
to be the unique polynomial $P_{\cE}(s)\in \mathbb{Q}[s]$ given by
$$P_{\cE}(s)=h^0(X, \cE(sH)) \text{ for all } s\gg 0\, .$$ This
polynomial has the form
$$P_{\cE}(s)=\frac{r(\cE)}{m!}s^m+\frac{d(\cE)}{(m-1)!}s^{m-1}+\dots$$
where $r(\cE)\geq 0$ and $d(\cE)$ are integer numbers and its degree
$m\leq n$ is equal to the dimension of the support of $\cE$.

\begin{defin}
A coherent sheaf $\cE$ is \emph{pure} of dimension $m$ if the
support of $\cE$ has dimension $m$ and the support of any nonzero
subsheaf $0\to\cF\to\cE$ has dimension $m$ as well.
\end{defin}

When $X$ is integral, pure sheaves of dimension $n$ are precisely
torsion-free sheaves. We can then adopt the following definition.
\begin{defin}
A coherent sheaf $\cE$ on $X$ is \emph{torsion-free} if it is pure
of dimension $n=\dim X,$ and it is a \emph{torsion} sheaf if the
dimension of its support is $m<n.$
\end{defin}

When $X$ is a projective curve with a fixed polarization $H$, the
Hilbert polynomial of a coherent sheaf $\cE$ on $X$ is then
$$P_{\cE}(s)=r(\cE)s+d(\cE)\in \mathbb{Z}[s],$$ a polynomial with integer
coefficients and at most of degree one. It is constant precisely for
torsion sheaves.

The (Simpson) {\it slope} of $\cE$ is defined as
$$\mu_S(\cE)=\frac{d(\cE)}{r(\cE)}\, .$$ This is a rational number if $r(\cE)\neq 0$ and
it is equal to infinity for torsion sheaves. It allows us to define
(Simpson) $\mu$-stability and $\mu$-semistability for pure sheaves
as usual.
\begin{defin}
A sheaf $\cE$ on $X$ is (Simpson) $\mu_S$-stable (resp.~$\mu_S$-semistable) with respect to $H$, if it is pure and for every
proper subsheaf $\cF\hookrightarrow\cE$ one has
$\mu_S(\cF)<\mu_S(\cE)$ (resp.~$\mu_S(\cF)\leq\mu_S(\cE)$).
\end{defin}

With these definitions any torsion sheaf on $X$ is
$\mu_S$-semistable and it is $\mu_S$-stable if and only if it has no
proper subsheaves, that is, it is isomorphic to $\cO_x$, the
structure sheaf of a point $x\in X$. As a particular case of
Simpson's work \cite{Simp96a}, we have the following existence
result. Fixing a polynomial $P(s)=rs+d\in \mathbb{Z}[s]$, and a
polarization $H$ on $X$, if the class of $\mu_S$-semistable sheaves
on $X$, with respect to $H$, with Hilbert polynomial equal to $P$ is
non-empty, then it has a coarse moduli space $\cM_X(r,d)$ which is
a projective scheme over $k$. Rational points of $\cM_X(r,d)$
correspond to $S$-equivalence classes of $\mu_S$-semistable sheaves
with Hilbert polynomial $P(s)=rs+d$.

\begin{rem}\label{r:RR} When $X$ is an integral curve and $\cE$ is a coherent sheaf
on it, one has classical notions of rank of $\cE$, as the rank at
the generic point of $X$, and degree of $\cE$, as
$\chi(\cE)-\rk(\cE)\chi(\cO_X)$. The Riemann-Roch theorem gives us
what is the relation between the coefficients of the Hilbert
polynomial and the usual rank and degree of $\cE$, namely
\begin{align*}
r(\cE)&=\deg(X)\cdot\rk(\cE)\\
d(\cE)&=\deg(\cE)+\rk(\cE)\cdot\chi(\cO_X)
\end{align*}
where $\deg(X)$ is the degree of $X$ defined in terms of the
polarization $H$. In this case, the Simpson notions of
$\mu_S$-stability and $\mu_S$-semistability are equivalent to the
usual ones for torsion-free sheaves. Thus, for integral curves
$\mu_S$-semistability does not depend on the polarization. This is
no longer true for non-integral curves (see \cite{LM05} for more
details).
\end{rem}
\subsubsection{Invariants of the transforms and the WIT condition}

In the rest of this section we assume that $X$ is a projective
Gorenstein curve of arithmetic genus one with trivial dualizing sheaf
and that $H$ is a fixed polarization of degree $h$ on $X$.

Since the curve $X$ may be a singular curve, we will work with the
Hilbert polynomial of a sheaf instead of its Chern characters that
might not be defined. The following proposition computes the Hilbert
polynomial of the transform of $\cE$ by the equivalences $\Phi$ and
$\widehat{\Phi}$ of the previous subsection and by
$\Psi=\fmf{\delta_\ast \cO_X(H)}{X}{X}$ and
$\what{\Psi}=\fmf{\delta_\ast \cO_X(-H)}{X}{X}$. Remember that for a
bounded complex $\cplx{F}$, the Euler characteristic is defined to
be the alternate sum
$$\chi(\cplx{F})=\sum_i(-1)^i\chi({\calH}^i(\cplx{F}))\, .$$ and the Hilbert
polynomial is by definition $P_{\cplx{F}}(s)=\chi(\cplx{F}(sH))$.

\begin{prop}\label{prop:RR}
Let $\cE$ be a sheaf on $X$ with Hilbert polynomial
$P_{\cE}(s)=rs+d$. Then
\begin{enumerate}
\item The Hilbert
polynomial of the complex $\Phi({\cE})$ (resp.~$\widehat{\Phi}({\cE})$) is equal to $(dh-r)s-d$ (resp.~$(dh+r)s+d$).
\item The Hilbert polynomial of the sheaf $\Psi(\cE)$ (resp.~$\what{\Psi}(\cE)$) is equal to $rs+d+r$ (resp.~$rs+d-r$).
\end{enumerate}
\end{prop}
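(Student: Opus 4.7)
The plan is to reduce everything to two basic calculations: computing the Hilbert polynomial $P_{\cO_X}(s) = hs$ (which follows from Riemann-Roch on $X$, using $\chi(\cO_X)=0$ since $X$ has arithmetic genus one) and the effect of the functor on the defining short exact sequences of the kernels.

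\textbf{Part (2).} First I would show that $\Psi$ is just the twist by $\cO_X(H)$. Using flat base change for $\delta$ and the projection formula one computes
\[
\Psi(\cE)=\bR\pi_{2\ast}\bigl(\pi_1^\ast\cE\lotimes\delta_\ast\cO_X(H)\bigr)
\simeq \bR\pi_{2\ast}\delta_\ast\bigl(\cE\otimes\cO_X(H)\bigr)\simeq\cE(H),
\]
and analogously $\what{\Psi}(\cE)\simeq\cE(-H)$. Therefore $P_{\Psi(\cE)}(s)=P_{\cE}(s+1)=rs+d+r$ and $P_{\what{\Psi}(\cE)}(s)=P_{\cE}(s-1)=rs+d-r$.

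\textbf{Part (1), case of $\Phi$.} Applying $\fmf{(-)}{X}{X}(\cE)$ to the defining triangle $\cI_\Delta\to\cO_{X\times X}\to\delta_\ast\cO_X$ gives a distinguished triangle
\[
\Phi(\cE)\longrightarrow \fmf{\cO_{X\times X}}{X}{X}(\cE)\longrightarrow \fmf{\delta_\ast\cO_X}{X}{X}(\cE)\longrightarrow \Phi(\cE)[1].
\]
The middle term is $\bR\Gamma(X,\cE)\otimes_k\cO_X$ by flat base change along the Cartesian square $\pi_2\circ\pi_1$, while the third term is $\cE$ by the projection formula (as in part (2) with $H=0$). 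Hilbert polynomials are additive on distinguished triangles, and $\bR\Gamma(X,\cE)\otimes_k\cO_X$ has Euler characteristic $\chi(\cE)=d$ in derived degree, so its Hilbert polynomial equals $d\cdot P_{\cO_X}(s)=dhs$. Hence
\[
P_{\Phi(\cE)}(s)=dhs-(rs+d)=(dh-r)s-d.
\]

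\textbf{Part (1), case of $\widehat\Phi$.} Rather than analyze the dual exact sequence, I would exploit part (3) of the preceding proposition, which gives $\Phi\circ\widehat\Phi\simeq[-1]$. Setting $\cF=\widehat\Phi(\cE)$, additivity of Hilbert polynomials forces $P_{\Phi(\cF)}(s)=-P_{\cE}(s)=-rs-d$. Writing $P_\cF(s)=r'\!s+d'$ and applying the formula already proved for $\Phi$, we get $(d'h-r')s-d'=-rs-d$, so $d'=d$ and $r'=dh+r$, yielding $P_{\widehat\Phi(\cE)}(s)=(dh+r)s+d$.

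The only place where something could go wrong is the base-change step identifying $\fmf{\cO_{X\times X}}{X}{X}(\cE)$ with $\bR\Gamma(X,\cE)\otimes_k\cO_X$, which is standard but relies on boundedness of $\cE$ and properness of $X$; both hold. Everything else is bookkeeping, making this lemma essentially a direct computation once $P_{\cO_X}(s)=hs$ is noted.
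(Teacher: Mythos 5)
Your proposal is correct and follows essentially the same route as the paper: the same defining triangle $\Phi(\cE)\to\fmf{\cO_{X\times X}}{X}{X}(\cE)\to\cE$, the same base-change identification of the middle term with $\bR\Gamma(X,\cE)\otimes_k\cO_X$ (whose Hilbert polynomial is $dhs$ since $\chi(\cO_X)=0$), the same use of $\Phi\circ\widehat\Phi\simeq[-1]$ to deduce the formula for $\widehat\Phi$, and the same observation that part (2) is immediate because $\Psi$ and $\widehat\Psi$ are twists by $\cO_X(\pm H)$. The only difference is cosmetic: you solve a small linear system for the $\widehat\Phi$ case where the paper leaves the computation implicit.
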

\begin{proof} (1) Denote $\cO=\cO_{X\times X}$ and consider the exact sequence $$0\to
\cI_\Delta\to \cO\to \delta_\ast \cO_X\to 0\, .$$ In the derived
category $\cdbc{X}$, this induces an exact triangle
\begin{equation}\label{e:triangle}
 \Phi(\cE)\to \fmf{\cO}{X}{X}(\cE)\to \cE\to \Phi(\cE)[1]
\end{equation}
 for any sheaf $\cE$ on $X$. Since the Euler characteristic is
additive for exact triangles in the derived category, the Hilbert
polynomial of the complex $\Phi({\cE})$ is equal to
$$\chi(\fmf{\cO}{X}{X}(\cE)(sH))-(rs+d)\, .$$

If  $p\colon X\to \Spec k$ is the projection of $X$ onto a point and
$\pi_i\colon X\times X\to X$  are the natural projections, the
base-change formula for the diagram
$$\xymatrix{X\times X\ar[r]^{\pi_1}\ar[d]_{\pi_2}& X\ar[d]^p\\X\ar[r]_p & \Spec k}$$ shows
that
\begin{equation}\label{e:dersect}
\fmf{\cO}{X}{X}(\cE)=\bR {\pi_2}_\ast(
\pi_1^\ast\cE)\simeq p^\ast\bR p_\ast(\cE)\simeq p^\ast \bR\Gamma(X,
\cE)=\bR \Gamma(X, \cE)\otimes_k\cO_X\, .
\end{equation}
 Then
$\chi(\fmf{\cO}{X}{X}(\cE)(sH))=\chi(\bR\Gamma(X,\cE)\otimes_k\cO_X(sH))=\chi(\cE)\chi(\cO_X(sH))=d(sh)$,
and the result follows. The Hilbert polynomial of $\what{\Phi(\cE)}$ is computed using that $\Phi\circ \widehat{\Phi}\simeq
[-1]$.

(2) Since the equivalence $\Psi$ (resp.~$\what{\Psi})$ is given  by
twisting by the line bundle $\cO_X(H)$ (resp.~$\cO_X(-H)$), this
part is immediate.
\end{proof}

\begin{rem}\label{r:linearity} Notice that if $\calL$ is an arbitrary line bundle on $X$, the second coefficient of the Hilbert polynomial of the transform $\fmf{\delta_\ast \calL}{X}{X}(\cE)$ is not  in general a linear function of the coefficients $r$ and $d$ (see Example \ref{e:nonlinear}).
\end{rem}

Any sheaf $\cE$ on $X$ is  WIT$_0$-$\Psi$ and WIT$_0$-$\what{\Psi}$.
In order to prove the preservation of stability under the
equivalence of $\cdbc{X}$ defined by the ideal of the diagonal
$\cI_{\Delta}$, we shall need a description of semistable sheaves
WIT$_i$ with respect to the Fourier-Mukai transforms $\Phi$ and
$\what{\Phi}$.

\begin{cor}\label{c:Wit}
Let $\cE$ be a non-zero sheaf on $X$.
\begin{enumerate}
\item If $\cE$ is WIT$_0$-$\Phi$, then $\mu_S(\cE)>1/h$.
\item If $\cE$ is WIT$_1$-$\Phi$, then  $\mu_S(\cE)\leq 1/h$.
\item If $\cE$ is WIT$_0$-$\what\Phi$, then  $\mu_S(\cE)>-1/h$.
\item If $\cE$ is WIT$_1$-$\what\Phi$, then  $\mu_S(\cE)\leq -1/h$
\end{enumerate}
\end{cor}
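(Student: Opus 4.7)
The plan is to combine Proposition \ref{prop:RR}, which computes the Hilbert polynomial of each transform, with the elementary fact that the leading coefficient of the Hilbert polynomial of a non-zero coherent sheaf is non-negative, and vanishes exactly when the sheaf is torsion, in which case the constant term equals its length and is strictly positive.

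Write $P_\cE(s) = rs + d$ and assume $\cE \neq 0$. In case (1), the WIT$_0$-$\Phi$ hypothesis means $\Phi(\cE) = \what\cE$ is a sheaf with Hilbert polynomial $(dh - r)s - d$ by Proposition \ref{prop:RR}, so non-negativity of the leading coefficient gives $dh \geq r$. If equality held, then $\what\cE$ would be torsion with $\length(\what\cE) = -d > 0$ (since $\Phi$ is an equivalence, $\cE \neq 0$ forces $\what\cE \neq 0$), hence $d < 0$; but $r = dh$ combined with $r \geq 0$ and $h > 0$ forces $d \geq 0$, a contradiction. Therefore $dh > r$, which reads as $\mu_S(\cE) > 1/h$, with the convention $\mu_S(\cE) = \infty$ when $r = 0$. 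Case (2) is analogous: WIT$_1$-$\Phi$ gives $\Phi(\cE) \simeq \what\cE[-1]$, so $P_{\what\cE}(s) = (r - dh)s + d$ and the non-negativity of the leading coefficient yields $dh \leq r$; the subcase $r = 0$ would make $\cE$ torsion with $d \geq 0$ while $dh \leq 0$ forces $d \leq 0$, hence $\cE = 0$, contradicting the hypothesis, so $r > 0$ and $\mu_S(\cE) \leq 1/h$.

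Cases (3) and (4) follow the same pattern using the Hilbert polynomial $P_{\what\Phi(\cE)}(s) = (dh + r)s + d$ of Proposition \ref{prop:RR}. In (3), the equality $dh + r = 0$ would force $\what\cE$ to be torsion with $\length(\what\cE) = d > 0$, while $r \geq 0$, $h > 0$ and $dh = -r \leq 0$ give $d \leq 0$, a contradiction, yielding the strict inequality $\mu_S(\cE) > -1/h$. In (4), the Hilbert polynomial of $\what\cE$ is $-(dh+r)s - d$, so $dh + r \leq 0$, and the subcase $r = 0$ is excluded exactly as in (2), giving $\mu_S(\cE) \leq -1/h$. The only delicate point throughout is the boundary analysis when the transform has rank zero, where one must examine the constant term to decide strict versus non-strict inequality and to rule out the trivial sheaf; no further obstacle is anticipated.
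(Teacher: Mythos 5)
Your argument is correct and is essentially the paper's own proof: both apply Proposition \ref{prop:RR} to read off the Hilbert polynomial of the transform, use non-negativity of its leading coefficient, and rule out the boundary case by observing that a non-zero torsion sheaf must have positive length while the sign of the constant term forbids this. The only cosmetic difference is that you spell out the exclusion of $r=0$ in cases (2) and (4), which the paper delegates to Remark \ref{r:propert}.
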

\begin{proof}
Let $P_{\cE}(s)=rs+d$ the Hilbert polynomial of $\cE$. By
Proposition \ref{prop:RR}, the Hilbert polynomial of $\Phi(\cE)$ is
$P_{\Phi(\cE)}(s)=(dh-r)s-d$. Suppose that $\cE$ is WIT$_0$-$\Phi$.
Then $\what{\cE}=\Phi(\cE)$ and $dh\geq r$. If $dh=r$, the transform
$\what\cE$ is a torsion sheaf of length $-d=-r/h\leq 0$ and thus it
is equal to zero.  But this is absurd because $\cE$ is non-zero. The
second statement follows straightforwardly and the proof for
$\what\Phi$ is similar.
\end{proof}

\begin{rem} \label{r:propert} The following easy properties will be used in the rest of
the section:
\begin{enumerate}
\item Torsion sheaves on $X$ are WIT$_0$ with respect to both
equivalences $\Phi$ and $\what\Phi.$ \item If a sheaf $\cE$ is
WIT$_1$ with Hilbert polynomial $P_{\cE}(s)=rs+d$, then $r\neq 0$.
\end{enumerate}
\end{rem}

\begin{prop}\label{prop:WIT classification}
If $\cE$ is a $\mu_S$-semistable  sheaf on $X$, then
\begin{enumerate}
\item $\cE$ is  WIT$_0$-$\Phi$  if and only if $\mu_S(\cE)>1/h$.
\item $\cE$ is  WIT$_1$-$\Phi$  if and only if $\mu_S(\cE)\leq
1/h$.
\end{enumerate}
\end{prop}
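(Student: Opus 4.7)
The reverse implications in both (1) and (2) are exactly Corollary \ref{c:Wit}(1)--(2), so only the forward ones require work. It suffices to establish the following dichotomy:

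\emph{Key claim.} Every non-zero semistable sheaf $\cE$ on $X$ is either WIT$_0$-$\Phi$ or WIT$_1$-$\Phi$.

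Granting the claim, we finish immediately: if $\mu_S(\cE)>1/h$ then $\cE$ cannot be WIT$_1$ (otherwise $\mu_S(\cE)\leq 1/h$ by Corollary \ref{c:Wit}(2)), so it must be WIT$_0$; and symmetrically for $\mu_S(\cE)\leq 1/h$ using Corollary \ref{c:Wit}(1).

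The plan for the key claim is as follows. First I would dispatch the torsion case: if $\cE$ has rank coefficient $r=0$ in its Hilbert polynomial, then $\mu_S(\cE)=\infty>1/h$ and $\cE$ is WIT$_0$-$\Phi$ by Remark \ref{r:propert}(1). Henceforth assume $\cE$ is pure of dimension one. Arguing by contradiction, suppose both $\Phi^0(\cE)$ and $\Phi^1(\cE)$ are non-zero. By Proposition \ref{prop:properties RFM}(3), $\Phi^0(\cE)$ is WIT$_1$-$\widehat{\Phi}$ and $\Phi^1(\cE)$ is WIT$_0$-$\widehat{\Phi}$; since $\widehat{\Phi}$ is (up to shift) an equivalence, the two transforms $\widehat{\Phi}^1(\Phi^0(\cE))$ and $\widehat{\Phi}^0(\Phi^1(\cE))$ are non-zero. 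Proposition \ref{prop:properties RFM}(4) then supplies a short exact sequence
\begin{equation*}
0\longrightarrow \widehat{\Phi}^1(\Phi^0(\cE))\longrightarrow \cE \longrightarrow \widehat{\Phi}^0(\Phi^1(\cE))\longrightarrow 0
\end{equation*}
with both outer terms non-zero.

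The heart of the argument is a slope computation using Proposition \ref{prop:RR}(1) and Corollary \ref{c:Wit}(3)--(4). Writing $P_{\Phi^0(\cE)}(s)=r_0s+d_0$, Proposition \ref{prop:RR}(1) (applied to $\widehat{\Phi}$) and the WIT$_1$-$\widehat{\Phi}$ property force $P_{\widehat{\Phi}^1(\Phi^0(\cE))}(s)=-(d_0h+r_0)s-d_0$; Corollary \ref{c:Wit}(4) gives $\mu_S(\Phi^0(\cE))\leq -1/h$, and purity of $\cE$ (hence of the subsheaf) forces $r_0>0$ and $d_0h+r_0<0$ strictly. A direct sign check then yields
\begin{equation*}
\mu_S\bigl(\widehat{\Phi}^1(\Phi^0(\cE))\bigr)=\frac{-d_0}{-(d_0h+r_0)}>\frac{1}{h}.
\end{equation*}
Analogously, writing $P_{\Phi^1(\cE)}(s)=r_1s+d_1$ and using that $\Phi^1(\cE)$ is WIT$_0$-$\widehat{\Phi}$ with $\mu_S(\Phi^1(\cE))>-1/h$ (Corollary \ref{c:Wit}(3)), one gets $d_1h+r_1>0$ and
\begin{equation*}
\mu_S\bigl(\widehat{\Phi}^0(\Phi^1(\cE))\bigr)=\frac{d_1}{d_1h+r_1}\leq \frac{1}{h}.
\end{equation*}
Thus in the sequence above the subsheaf has slope strictly greater than the quotient, which contradicts the semistability of $\cE$.

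The main obstacle is really this last slope chase: one must correctly track signs when $\widehat\Phi^1(\Phi^0(\cE))$ is interpreted as a genuine sheaf (its Hilbert polynomial comes with a minus sign because the transform lives in degree one), and one must verify that the subsheaf and quotient both have positive rank coefficient so that their slopes are finite. The rest of the argument is then a clean application of the exact sequence in Proposition \ref{prop:properties RFM}(4) and the WIT bounds of Corollary \ref{c:Wit}.
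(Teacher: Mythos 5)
Your proposal is correct and follows essentially the same route as the paper: both rest on the short exact sequence $0\to\what\Phi^1(\Phi^0(\cE))\to\cE\to\what\Phi^0(\Phi^1(\cE))\to 0$ from Proposition \ref{prop:properties RFM}(4) together with the slope bounds of Corollary \ref{c:Wit}, and derive a contradiction with semistability. The only (cosmetic) differences are that you package the argument as a single dichotomy claim rather than treating (1) and (2) separately, and that you recover the slope bounds on the outer terms by an explicit Hilbert-polynomial computation via Proposition \ref{prop:RR}, where the paper simply applies Corollary \ref{c:Wit}(1)--(2) directly to those terms, which are WIT$_0$-$\Phi$ and WIT$_1$-$\Phi$ respectively.
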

\begin{proof} If $\cE$ is a torsion sheaf, the result follows from
Remark \ref{r:propert}. Suppose then that $\cE$ is torsion-free and
consider the short exact sequence
$$0\longrightarrow\what\Phi^1(\Phi^0(\cE))\longrightarrow\cE\longrightarrow\widehat\Phi^0(\Phi^1(\cE))\longrightarrow 0\, .$$

(1) The direct implication is given by Corollary \ref{c:Wit}. Let us
prove the converse. If $\mu_S(\cE)>1/h$ and $\cE$ is not
WIT$_0$-$\Phi$, by the above exact sequence
$\what\Phi^0(\Phi^1(\cE))$ is a non-zero quotient of $\cE$ and
WIT$_1$-$\Phi$. By Corollary \ref{c:Wit}, its slope is
$\mu_S(\what\Phi^0(\Phi^1(\cE)))\leq 1/h$  and consequently,
$\mu_S(\what\Phi^1(\Phi^0(\cE))) > \mu_S(\cE )$. This contradicts
the semistability of $\cE$. Thus, $\cE$ is WIT$_0$-$\Phi$.

\medskip
(2) The direct implication follows again from Corollary \ref{c:Wit}.
For the converse we proceed as before. If $\mu_S(\cE)\leq 1/h$ and
$\cE$ is not WIT$_1$-$\Phi$, by the exact sequence
$\what\Phi^1(\Phi^0(\cE))$ is a non-zero subsheaf of $\cE$ and
WIT$_0$-$\Phi$. By Corollary \ref{c:Wit}, its slope is
$\mu_S(\what\Phi^1(\Phi^0(\cE)))> 1/h$ and this contradicts the
semistability of $\cE$. Thus, $\cE$ is WIT$_1$-$\Phi$.

\end{proof}
There exists a similar result for $\what\Phi$ whose proof is
analogous.
\begin{prop}\label{prop:WIT classification2}
If $\cE$ is a $\mu_S$-semistable  sheaf on $X$, then
\begin{enumerate}
\item $\cE$ is  WIT$_0$-$\what\Phi$  if and only if
$\mu_S(\cE)>-1/h$. \item $\cE$ is  WIT$_1$-$\what\Phi$  if and only
if $\mu_S(\cE)\leq -1/h$.
\end{enumerate}
\end{prop}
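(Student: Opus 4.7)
The plan is to mirror the proof of Proposition \ref{prop:WIT classification}, with the roles of $\Phi$ and $\what\Phi$ interchanged. Since $\what\Phi$ is a shift of the quasi-inverse of $\Phi$ (so that $\Phi\circ\what\Phi\simeq[-1]$), the entire package of Proposition \ref{prop:properties RFM} admits a symmetric version. In particular, the Grothendieck spectral sequence for the composition $\Phi\circ\what\Phi$ yields
$$E_2^{p,q}=\Phi^p(\what\Phi^q(\cE))\Longrightarrow \begin{cases}\cE & \text{if }p+q=1,\\ 0 & \text{otherwise,}\end{cases}$$
whose five-term exact sequence is
$$0\to\Phi^1(\what\Phi^0(\cE))\to\cE\to\Phi^0(\what\Phi^1(\cE))\to 0,$$
and moreover $\what\Phi^0(\cE)$ is WIT$_1$-$\Phi$ while $\what\Phi^1(\cE)$ is WIT$_0$-$\Phi$ (same proof as (3) in Proposition \ref{prop:properties RFM}).

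If $\cE$ is torsion, then $\cE$ is WIT$_0$-$\what\Phi$ by Remark \ref{r:propert}(1) and $\mu_S(\cE)=\infty>-1/h$, so (1) holds and (2) is vacuous. From now on assume $\cE$ torsion-free and $\mu_S$-semistable. The direct implications in (1) and (2) follow immediately from Corollary \ref{c:Wit}(3) and (4) respectively.

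For the converse of (1), suppose $\mu_S(\cE)>-1/h$ but $\what\Phi^1(\cE)\neq 0$. Then $\Phi^0(\what\Phi^1(\cE))$ is a nonzero quotient of $\cE$ by the displayed sequence. Since $\what\Phi^1(\cE)$ is WIT$_0$-$\Phi$, applying $\what\Phi$ and using $\what\Phi\circ\Phi\simeq[-1]$ gives $\what\Phi(\Phi^0(\what\Phi^1(\cE)))=\what\Phi^1(\cE)[-1]$, so $\Phi^0(\what\Phi^1(\cE))$ is WIT$_1$-$\what\Phi$ and Corollary \ref{c:Wit}(4) yields $\mu_S(\Phi^0(\what\Phi^1(\cE)))\leq -1/h<\mu_S(\cE)$, contradicting semistability. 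For the converse of (2), suppose $\mu_S(\cE)\leq -1/h$ but $\what\Phi^0(\cE)\neq 0$. Then $\Phi^1(\what\Phi^0(\cE))$ is a nonzero subsheaf of $\cE$, and since $\what\Phi^0(\cE)$ is WIT$_1$-$\Phi$, the analogous computation with $\Phi\circ\what\Phi\simeq[-1]$ shows it is WIT$_0$-$\what\Phi$; Corollary \ref{c:Wit}(3) then yields slope $>-1/h\geq\mu_S(\cE)$, again contradicting semistability.

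The main obstacle is only verifying the symmetric form of Proposition \ref{prop:properties RFM}, which amounts to the observation that $\Phi$ and $\what\Phi[1]$ are quasi-inverse equivalences, so every categorical statement (spectral sequence, WIT-transfer, five-term sequence) applies with their roles swapped. Once that is in place, identifying the WIT-$\what\Phi$ class of $\Phi^{1-i}(\what\Phi^i(\cE))$ is a straightforward manipulation of $\Phi\circ\what\Phi\simeq[-1]$, and the rest of the argument is a verbatim transcription of the proof of Proposition \ref{prop:WIT classification}.
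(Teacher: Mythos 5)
Your proposal is correct and is precisely the argument the paper intends: the paper gives no separate proof of this proposition, stating only that it is ``analogous'' to Proposition \ref{prop:WIT classification}, and your write-up supplies exactly that analogue, correctly justifying the symmetric Mukai spectral sequence and WIT-transfer statements via $\Phi\circ\what\Phi\simeq[-1]$ before running the same quotient/subsheaf slope contradictions. No gaps.
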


The following simple but useful result is known as the
Parseval theorem (see for instance \cite{BBH08} for a proof).
\begin{prop}\label{prop:Parseval}
Let $\Phi\colon D(X)\to D(Y)$ be an exact fully faithful functor,
$\cF$ a WIT$_i$-$\Phi$ sheaf and $\cG$ a WIT$_j$-$\Phi$ sheaf on
$X.$ Then for all $k,$ one has
$$\Ext_X^{k}(\cF,\cG)\simeq\Ext_Y^{k+i-j}(\what\cF,\what\cG)$$
In particular if $\cF$ is a simple WIT-$\Phi$ sheaf, then the
transform $\what\cF$ is also simple. \end{prop}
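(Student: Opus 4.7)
The strategy is to pass through derived Homs and use fully faithfulness, since the WIT condition precisely says that the derived transform of each sheaf concentrates in a single degree (up to a shift).

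First, I would unwind the notation: the WIT hypothesis on $\cF$ means $\Phi(\cF) \simeq \what\cF[-i]$ in $\catD(Y)$, and similarly $\Phi(\cG) \simeq \what\cG[-j]$. Next, I would identify $\Ext_X^k(\cF,\cG)$ with the derived category Hom:
\begin{equation*}
\Ext_X^k(\cF,\cG) \simeq \Hom_{\catD(X)}(\cF, \cG[k]).
\end{equation*}
Full faithfulness of $\Phi$ then gives an isomorphism
\begin{equation*}
\Hom_{\catD(X)}(\cF, \cG[k]) \simeq \Hom_{\catD(Y)}(\Phi(\cF), \Phi(\cG[k])) \simeq \Hom_{\catD(Y)}(\what\cF[-i], \what\cG[k-j]),
\end{equation*}
using that $\Phi$ commutes with the shift functor. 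Finally, using the standard fact that shifts move through $\Hom$ in a triangulated category as $\Hom(A[n], B[m]) \simeq \Hom(A, B[m-n])$, this last group is
\begin{equation*}
\Hom_{\catD(Y)}(\what\cF, \what\cG[k+i-j]) \simeq \Ext_Y^{k+i-j}(\what\cF, \what\cG),
\end{equation*}
which is the desired isomorphism.

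For the last assertion, I would specialize to $\cF = \cG$ (so $i=j$) and $k=0$. Then the isomorphism reads $\End_X(\cF) \simeq \End_Y(\what\cF)$. Simplicity of $\cF$ means $\End_X(\cF) \simeq k$, so $\End_Y(\what\cF) \simeq k$ as well, which is precisely the simplicity of $\what\cF$.

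There is no real obstacle here; the only subtle point is bookkeeping of the shifts $[-i]$ and $[-j]$ and ensuring that the shift $k$ on the right becomes $k+i-j$ after moving $[-i]$ across the $\Hom$. No hypothesis on $\Phi$ beyond exactness and full faithfulness is used, so this is really a formal consequence of Yoneda-type reasoning in triangulated categories applied to objects concentrated in a single degree.
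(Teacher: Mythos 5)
Your proof is correct and is exactly the standard argument: the paper itself gives no proof, deferring to \cite{BBH08}, and the proof there is precisely this identification of $\Ext^k$ with $\Hom_{\catD}(\cF,\cG[k])$ followed by full faithfulness and shift bookkeeping. The shift computation $k \mapsto k+i-j$ and the specialization to $\cF=\cG$, $k=0$ for simplicity are both handled correctly.
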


\begin{prop}\label{p:simple}
Let $\cE$ be a simple (resp.~indecomposable) semistable sheaf on $X.$ Then
 the transform $\what\cE$ with respect to both  $\Phi$ and
$\what\Phi$ is also a
simple (resp.~indecomposable) sheaf.
\end{prop}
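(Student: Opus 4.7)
The plan is to combine the WIT classification of semistable sheaves with the fact that $\Phi$ and $\what\Phi$ are equivalences of derived categories. First, I would use that semistability of $\cE$ forces, by Propositions \ref{prop:WIT classification} and \ref{prop:WIT classification2}, that $\cE$ is WIT$_i$-$\Phi$ for a unique $i\in\{0,1\}$ and WIT$_j$-$\what\Phi$ for a unique $j\in\{0,1\}$, with the index determined by the sign of $\mu_S(\cE)\mp 1/h$. In either case $\what\cE$ is a genuine sheaf and $\Phi(\cE)\simeq\what\cE[-i]$ in $\cdbc{X}$ (and similarly for $\what\Phi$).

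For the simple case I would apply the Parseval theorem (Proposition \ref{prop:Parseval}) with $\cF=\cG=\cE$, so that the indices coincide. This yields
$$\Hom_X(\cE,\cE)\simeq\Hom_X(\what\cE,\what\cE),$$
so that $\End(\what\cE)=k$ whenever $\End(\cE)=k$. The same argument applied to $\what\Phi$ gives simplicity of the $\what\Phi$-transform.

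For the indecomposable case, I would argue by contradiction. Suppose $\what\cE\simeq\cF_1\oplus\cF_2$ as sheaves with both summands non-zero. By Proposition \ref{prop:properties RFM}(2) the sheaf $\what\cE$ is WIT$_{1-i}$-$\what\Phi$, and since cohomology sheaves commute with direct sums, each $\cF_\ell$ is also WIT$_{1-i}$-$\what\Phi$; write $\what\Phi(\cF_\ell)\simeq\what{\cF_\ell}[i-1]$ for sheaves $\what{\cF_\ell}$. Using $\what\Phi\circ\Phi\simeq[-1]$ I would compute
$$\cE[-1]\simeq\what\Phi(\Phi(\cE))\simeq\what\Phi(\cF_1[-i])\oplus\what\Phi(\cF_2[-i])\simeq\what{\cF_1}[-1]\oplus\what{\cF_2}[-1],$$
and shifting would produce a non-trivial decomposition $\cE\simeq\what{\cF_1}\oplus\what{\cF_2}$ of sheaves, contradicting indecomposability of $\cE$. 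The case of $\what\Phi$ is symmetric, using $\Phi\circ\what\Phi\simeq[-1]$.

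The argument is essentially formal once one knows $\cE$ is WIT; the only point requiring care is the bookkeeping of shifts and WIT indices, so no substantial obstacle is expected.
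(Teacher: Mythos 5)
Your proof is correct and follows essentially the same route as the paper: semistability forces $\cE$ to be WIT with respect to both functors via Propositions \ref{prop:WIT classification} and \ref{prop:WIT classification2}, simplicity is transferred by the Parseval theorem applied with $\cF=\cG=\cE$, and indecomposability is transferred because the quasi-inverse carries a direct-sum decomposition of $\what\cE$ back to one of $\cE$. The only difference is that you spell out the shift bookkeeping for the indecomposable case, which the paper's proof leaves implicit in a single sentence.
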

\begin{proof}
By Propositions \ref{prop:WIT classification} and \ref{prop:WIT classification2}, $\cE$ is WIT with respect to both $\Phi$ and $\what\Phi$. Then $\what\cE$ is indecomposable when $\cE$ is so. Moreover,  if $\cE$ is simple, $\what\cE$ is simple by the Parseval  formula (Proposition \ref{prop:Parseval}).
\end{proof}

\subsubsection{Preservation of (semi)stability} If $X$ is an irreducible curve of arithmetic genus
one, the group of exact auto-equivalences of its derived category
$\cdbc{X}$  is described in \cite{BuKr05}. As it happens for the
smooth case, this group is generated by the trivial equivalences
(twists by line bundles on $X$, automorphisms of $X$ and the shift
functor [1]) together with the Fourier-Mukai transform $\Phi$ whose
kernel is the ideal of the diagonal. Then, taking into account that
on integral curves tensoring by line bundles preserves trivially the
(semi)stability of sheaves, the fact that the non-trivial
Fourier-Mukai functor $\Phi$ transforms (semi)stable sheaves into
(semi)stable sheaves (up to shift) and stable sheaves into stable ones (also up to shift)
(cf.~\cite{BBH08}) ensures that any auto-equivalence of  the derived
category $\cdbc{X}$ preserves stability.

However, this is no longer true for non-irreducible curves.
Actually, if $X$ is a non-irreducible curve, there are examples of
equivalences of $\cdbc{X}$ that do not preserve semistability, and
we can find examples of such equivalences among those of the most
simple type, namely, among the equivalences $\fmf{\delta_\ast
\calL}{X}{X}$ consisting of twisting by a line bundle $\calL$.

Consider, for instance, a curve $X$  of type $E_2$, that is, two
rational curves meeting transversally at two points (cf.~Figure
\ref{fig:e2}).

\begin{figure}[htbp]
\centering
\includegraphics[width=.3in]{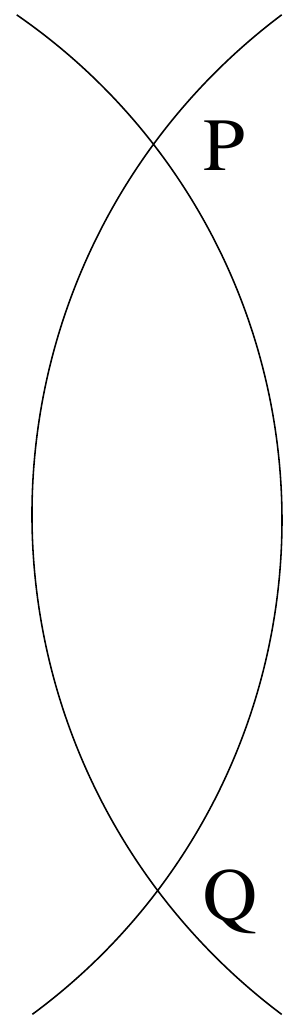}
\caption{The curve $E_2$} \label{fig:e2}
\end{figure}

Take  a line bundle $\calL$ on $X$ which has degree $2$ in one
irreducible component and degree $-2$ in the other one. From Propositions
6.2 and 6.3 in \cite{LM05},  $\cO_X$ is a stable sheaf but $\calL$
is not even semistable. Then twisting by $\calL$ is a Fourier-Mukai
transform  which does not preserve semistability.

\begin{rem}\label{rem:simplenotwit}
A straightforward computation shows that both $H^0(X, \calL)$ and $H^1(X, \calL)$ are one dimensional vector spaces. Using Equations \eqref{e:triangle} and \eqref{e:dersect} we deduce that $\Phi(\cE)$ is a complex with two nonzero cohomology sheaves. This proves that simple (unstable) sheaves on $X$ may fail to be WIT.
\end{rem}

Thus it is important  to characterize the auto-equivalences of
$\cdbc{X}$ which preserve stability on a non-irreducible curve $X$
of arithmetic genus 1. This seems to be a difficult task, and here
we just provide non-trivial instances of such equivalences.

To begin with, note that twisting by the ample sheaf $\cO_X(H)$
trivially preserves stability, that is,  the transform of a
$\mu_S$-(semi)stable sheaf by the equivalences $\Psi$ or
$\what{\Psi}$ is again $\mu_S$-(semi)stable (cf.~Proposition
\ref{prop:RR}).

In this section we prove that the non-trivial Fourier-Mukai functors
$\Phi$ and $\what{\Phi}$ preserve semistability as well.

\begin{lem}\label{prop:WIT1 is semistable}
Let $\cE$ be a sheaf on $X$ with Hilbert polynomial
$P_{\cE}(s)=dhs+d$ and $d>0.$ Then $\cE$ is WIT$_1$-$\Phi$ if and
only if $\cE$ is a torsion free $\mu_S$-semistable sheaf.
Analogously if $P_{\cE}(s)=dhs-d$ with $d>0,$ then $\cE$ is
WIT$_1$-$\what\Phi$ if and only if $\cE$ is a torsion free
$\mu_S$-semistable sheaf.
\end{lem}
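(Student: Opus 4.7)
The plan is to observe first that the Hilbert polynomial forces $\mu_S(\cE)=d/(dh)=1/h$, so that the implication ``torsion free $\mu_S$-semistable $\Rightarrow$ WIT$_1$-$\Phi$'' is an immediate consequence of Proposition \ref{prop:WIT classification}(2). The substantive direction is the converse.

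For the converse, I would first isolate the following structural observation: a WIT$_1$-$\Phi$ sheaf admits no non-zero WIT$_0$-$\Phi$ subsheaf. To see it, given a short exact sequence $0\to\cF\to\cE\to\cG\to 0$ with $\cF$ WIT$_0$-$\Phi$ and $\cE$ WIT$_1$-$\Phi$, apply $\Phi$ and read off the long exact sequence of cohomology sheaves. Since $\Phi^1(\cF)=0$ and $\Phi^0(\cE)=0$, the sequence immediately forces $\Phi^0(\cF)=0$, whence $\Phi(\cF)=0$, and $\cF=0$ because $\Phi$ is an equivalence.

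Armed with this, the converse unfolds in two steps. First, torsion-freeness: any torsion subsheaf of $\cE$ is WIT$_0$-$\Phi$ by Remark \ref{r:propert}(1), hence vanishes, so $\cE$ is pure of dimension one. Second, $\mu_S$-semistability: if $\cE$ were not semistable, then (being pure) it would admit a maximal destabilizing subsheaf $\cF$, which is itself $\mu_S$-semistable and satisfies $\mu_S(\cF)>\mu_S(\cE)=1/h$. By Proposition \ref{prop:WIT classification}(1), this $\cF$ is WIT$_0$-$\Phi$, contradicting the structural observation.

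The statement concerning $\what\Phi$ is proved by the mirror argument, with the Hilbert polynomial now forcing $\mu_S(\cE)=-1/h$, and Proposition \ref{prop:WIT classification2} replacing Proposition \ref{prop:WIT classification} throughout. The main (and essentially only) obstacle is identifying the WIT$_0$-in-WIT$_1$ vanishing observation; once that is in hand, the conclusion is a clean Harder--Narasimhan-style contradiction combined with the slope criteria already established.
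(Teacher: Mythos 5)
Your proof is correct and follows essentially the same route as the paper's: both hinge on the observation, read off from the long exact sequence of $\Phi$-cohomology sheaves, that a WIT$_1$-$\Phi$ sheaf admits no non-zero WIT$_0$-$\Phi$ subsheaf (the paper phrases this as ``every subsheaf of a WIT$_1$ sheaf is again WIT$_1$''), combined with Remark \ref{r:propert} for torsion-freeness and the slope bounds for the converse direction. The only cosmetic difference is the final semistability step: the paper applies Corollary \ref{c:Wit}(2) directly to an arbitrary subsheaf, which is WIT$_1$ and hence has slope at most $1/h=\mu_S(\cE)$, whereas you argue by contradiction through the maximal destabilizing subsheaf and Proposition \ref{prop:WIT classification}(1); both are valid.
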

\begin{proof}
If $\cE$ is WIT$_1$-$\Phi$, then any subsheaf is  WIT$_1$-$\Phi$ as
well. Since by Remark \ref{r:propert} torsion sheaves are
WIT$_0$-$\Phi$, this proves that $\cE$ is torsion-free. Moreover, if
$\cF\hookrightarrow\cE$ is a subsheaf, by Corollary \ref{c:Wit},
$\mu_S(\cF)\leq 1/h=\mu_S(\cE),$ so that $\cE$ is
$\mu_S$-semistable. The converse is part of Proposition
\ref{prop:WIT classification}. The proof for $\what\Phi$ is similar.
\end{proof}

\begin{prop} \label{p:transform}
Let $\cT$ a non-zero torsion sheaf on $X$. Then the transform
$\what\cT$ with respect to both equivalences $\Phi$ and $\what\Phi$
is a torsion-free $\mu_S$-semistable sheaf.
\end{prop}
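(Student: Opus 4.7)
The plan is to reduce the proposition directly to Lemma \ref{prop:WIT1 is semistable} by combining it with the computation of Hilbert polynomials in Proposition \ref{prop:RR} and the WIT behavior recorded in Proposition \ref{prop:properties RFM}. Let $\cT$ be a non-zero torsion sheaf, so $P_\cT(s) = d$ is a constant polynomial with $d = \operatorname{length}(\cT) > 0$.

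First I would note that by Remark \ref{r:propert}(1) the sheaf $\cT$ is WIT$_0$ with respect to both $\Phi$ and $\what\Phi$, so the symbol $\what\cT$ genuinely denotes a sheaf rather than a complex, and $\what\cT = \Phi(\cT) = \Phi^0(\cT)$ (respectively $\what\cT = \what\Phi(\cT) = \what\Phi^0(\cT)$). Applying Proposition \ref{prop:RR} with $r=0$ gives
\[
P_{\Phi(\cT)}(s) = dh\,s - d, \qquad P_{\what\Phi(\cT)}(s) = dh\,s + d,
\]
both with $d>0$.

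Next I would feed these into Proposition \ref{prop:properties RFM}(3), which tells us that for every sheaf $\cE$ the sheaf $\Phi^0(\cE)$ is WIT$_1$-$\what\Phi$; by the symmetric statement (which follows from the fact that $\Phi$ and $\what\Phi$ are mutually quasi-inverse up to a shift) the sheaf $\what\Phi^0(\cE)$ is WIT$_1$-$\Phi$. In particular $\Phi(\cT)$ is WIT$_1$-$\what\Phi$ and $\what\Phi(\cT)$ is WIT$_1$-$\Phi$.

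Finally, I would apply Lemma \ref{prop:WIT1 is semistable}: the sheaf $\Phi(\cT)$ has Hilbert polynomial $dh\,s - d$ with $d>0$ and is WIT$_1$-$\what\Phi$, so by the lemma it is torsion-free and $\mu_S$-semistable; the sheaf $\what\Phi(\cT)$ has Hilbert polynomial $dh\,s + d$ with $d>0$ and is WIT$_1$-$\Phi$, so by the lemma it is likewise torsion-free and $\mu_S$-semistable. This proves both cases simultaneously. No step here is really an obstacle since everything has been set up in the previous lemmas; the only thing to verify with care is the invocation of the symmetric form of Proposition \ref{prop:properties RFM}(3) for $\what\Phi$, which one sees from the Mukai spectral sequence by interchanging the roles of $\Phi$ and $\what\Phi$.
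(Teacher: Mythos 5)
Your proof is correct and follows essentially the same route as the paper: observe that $\cT$ is WIT$_0$, compute the Hilbert polynomial of the transform via Proposition \ref{prop:RR}, deduce that the transform is WIT$_1$ for the quasi-inverse functor, and conclude by Lemma \ref{prop:WIT1 is semistable}. Your explicit attention to the symmetric form of Proposition \ref{prop:properties RFM}(3) for $\what\Phi$ is a minor point the paper leaves implicit ("the proof for $\what\Phi$ is the same"), but the argument is identical in substance.
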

\begin{proof}
Since $\cT$ is a torsion sheaf, $\cT$ is WIT$_0$-$\Phi$ and its
Hilbert polynomial is $P_{\cT}(s)=d$ with $d>0$. The transform
$\what\cT$ is WIT$_1$-$\what\Phi$ and, by Proposition \ref{prop:RR},
its Hilbert polynomial is $P_{\what\cT}(s)=dhs-d$. We conclude by
Lemma \ref{prop:WIT1 is semistable}. The proof for $\what\Phi$ is
the same.
\end{proof}

We state now the result that ensures the preservation of
semistability under the Fourier-Mukai transforms $\Phi$ and
$\what\Phi$.

\begin{thm} \label{t:preservation}Let $X$ be a projective Gorenstein curve of arithmetic
genus one and trivial dualizing sheaf. Fix a polarization $H$ on
$X$. Let $\cE$ be a pure dimensional sheaf on $X$. If $\cE$ is
$\mu_S$-semistable with respect to $H$, then its transform
$\what\cE$ with respect to both equivalences $\Phi$ and $\what\Phi$
is also  $\mu_S$-semistable with respect to $H$.
\end{thm}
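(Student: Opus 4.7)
The plan is to follow the standard Fourier--Mukai approach used in the smooth elliptic and irreducible cases, with Simpson-slope inequalities replacing Chern-character inequalities. The torsion case is immediate from Proposition~\ref{p:transform}, so reduce to $\cE$ torsion-free with $P_{\cE}(s)=rs+d$, $r>0$. By the WIT classifications (Propositions~\ref{prop:WIT classification} and \ref{prop:WIT classification2}) such an $\cE$ is either WIT$_0$-$\Phi$ or WIT$_1$-$\Phi$ according to whether its slope exceeds $1/h$ or not, and similarly for $\widehat{\Phi}$ with threshold $-1/h$. Since $\Phi$ and $\widehat{\Phi}$ play perfectly symmetric roles, it suffices to treat, say, the WIT$_1$-$\Phi$ case; the WIT$_0$-$\Phi$ statement follows by running the parallel argument with $\widehat{\Phi}$ in place of $\Phi$ and swapping $1/h\leftrightarrow -1/h$.

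Assume then $\mu_S(\cE)\leq 1/h$ and set $\widehat{\cE}=\Phi^1(\cE)$. A formal consequence of $\widehat{\Phi}\circ\Phi\simeq [-1]$ is that $\widehat{\cE}$ is WIT$_0$-$\widehat{\Phi}$ with $\widehat{\Phi}^0(\widehat{\cE})=\cE$. Suppose by contradiction that $\widehat{\cE}$ is not $\mu_S$-semistable, and let $\cF\hookrightarrow\widehat{\cE}$ be a maximal destabilizing subsheaf; then $\cF$ is itself $\mu_S$-semistable and $\mu_S(\cF)>\mu_S(\widehat{\cE})$. Corollary~\ref{c:Wit} gives $\mu_S(\widehat{\cE})>-1/h$, hence also $\mu_S(\cF)>-1/h$, and Proposition~\ref{prop:WIT classification2} forces $\cF$ to be WIT$_0$-$\widehat{\Phi}$. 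The long exact sequence obtained by applying $\widehat{\Phi}$ to $0\to\cF\to\widehat{\cE}\to\cQ\to 0$ then collapses to an injection $\widehat{\Phi}^0(\cF)\hookrightarrow\cE$, and its source is non-zero because $\cF$ is non-zero and WIT$_0$-$\widehat{\Phi}$.

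The contradiction is now a direct Hilbert-polynomial computation. Writing $P_{\cF}(s)=r_1 s+d_1$, Proposition~\ref{prop:RR} gives $P_{\widehat{\Phi}^0(\cF)}(s)=(d_1 h+r_1)s+d_1$ and $P_{\widehat{\cE}}(s)=(r-dh)s+d$. Since $\mu_S(\cF)>-1/h$ one has $r_1+d_1 h>0$, and WIT$_1$-$\Phi$ gives $r-dh\geq 0$; a routine cross-multiplication therefore shows that
\begin{equation*}
\mu_S(\cF)=\frac{d_1}{r_1}>\frac{d}{r-dh}=\mu_S(\widehat{\cE})\quad\Longleftrightarrow\quad\mu_S(\widehat{\Phi}^0(\cF))=\frac{d_1}{d_1 h+r_1}>\frac{d}{r}=\mu_S(\cE),
\end{equation*}
which contradicts the $\mu_S$-semistability of $\cE$.

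The main obstacle I expect is purely administrative bookkeeping for the degenerate configurations: the edge case $r-dh=0$ (where $\widehat{\cE}$ is torsion and thus automatically $\mu_S$-semistable, so no destabilizing $\cF$ exists), and the boundary values $\mu_S(\cF)=-1/h$ or $\mu_S(\cE)=1/h$ (where one must verify that $\widehat{\Phi}^0(\cF)$ does not degenerate to a torsion sheaf that evades the slope comparison). In each such case either the existence of $\cF$ is ruled out directly, or the strict inequality in the reformulation persists to yield the same contradiction. The underlying mechanism is the M\"obius-type transformation $\mu\mapsto \mu/(1-h\mu)$ that $\Phi$ induces on Simpson slopes, following the pattern of \cite{Bri98,Pol03,BBH08}.
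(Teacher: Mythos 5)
Your treatment of the case $\mu_S(\cE)\le 1/h$ (so $\cE$ is WIT$_1$-$\Phi$ and $\what\cE$ is WIT$_0$-$\what\Phi$) is essentially the paper's argument for that case: the maximal destabilizing subsheaf $\cF\hookrightarrow\what\cE$, the slope bound $\mu_S(\cF)>\mu_S(\what\cE)>-1/h$ forcing $\cF$ to be WIT$_0$-$\what\Phi$, the injection $\what\Phi^0(\cF)\hookrightarrow\what\Phi^0(\what\cE)=\cE$, and the M\"obius comparison of slopes; the edge cases you flag ($dh=r$, and $\cF$ torsion) are disposed of exactly as you predict.

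The gap is in the reduction of the remaining case. When $\mu_S(\cE)>1/h$ the sheaf $\cE$ is WIT$_0$-$\Phi$ and $\what\cE=\Phi^0(\cE)$ is WIT$_1$-$\what\Phi$; this is \emph{not} obtained from the case you proved by swapping $\Phi\leftrightarrow\what\Phi$ and $1/h\leftrightarrow -1/h$. That swap produces the statement for $\what\Phi$ applied to WIT$_1$-$\what\Phi$ sheaves, i.e.\ another instance of the ``low slope'' case, and says nothing about $\Phi$ applied to WIT$_0$-$\Phi$ sheaves. Worse, the subsheaf mechanism you rely on breaks down there: for a destabilizing subsheaf $\cF$ of the WIT$_1$-$\what\Phi$ sheaf $\what\cE$ one has $\what\Phi^0(\what\cE)=0$, so $\cF$ is automatically WIT$_1$-$\what\Phi$, but the induced map $\what\Phi^1(\cF)\to\what\Phi^1(\what\cE)=\cE$ has kernel $\what\Phi^0(\what\cE/\cF)$ and need not be injective, so you do not obtain a destabilizing subsheaf of $\cE$. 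The paper's case (2) instead takes the \emph{minimal destabilizing quotient} $\cG$ of $\what\cE$, which is semistable by the Harder--Narasimhan filtration and satisfies $\mu_S(\cG)<\mu_S(\what\cE)<-1/h$, hence is WIT$_1$-$\what\Phi$ by Proposition \ref{prop:WIT classification2}; only then do all three terms of $0\to\cF\to\what\cE\to\cG\to 0$ transform into a short exact sequence $0\to\what\cF\to\cE\to\what\cG\to 0$, and the contradiction comes from $1/h<\mu_S(\what\cF)\le\mu_S(\cE)$ together with the M\"obius formula. You need to supply this dual, quotient-based argument (or an equivalent device) to cover that half of the theorem.
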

\begin{proof} Let $P_{\cE}(s)=rs+d$ be the Hilbert polynomial of
$\cE$ with respect to the fixed polarization $H$ and, as before,
denote by $h$ the degree of $H$. If $r=0$, the result is proved in
Proposition \ref{p:transform}, so that we can assume that $\cE$ is a
torsion-free sheaf. Let us distinguish the following cases:

\medskip
(1)  If $dh=r$, $\what\cE$ is a torsion sheaf by Proposition \ref{prop:RR} and thus semistable.

\medskip
(2) Suppose now that $dh>r$. By Proposition \ref{prop:WIT
classification}, $\cE$ is WIT$_0$-$\Phi$, so that $\what\cE$ is
WIT$_1$-$\what\Phi$. The same argument as in Lemma \ref{prop:WIT1
is semistable} shows that $\what{\cE}$ is torsion-free. If
$\what{\cE}$ is not $\mu_S$-semistable, there is an exact sequence
$$0\longrightarrow\cF\longrightarrow\what\cE\longrightarrow\cG\longrightarrow
0\,,$$ with $\mu_S(\cG)<\mu_S(\what\cE)<\mu_S(\cF)$. Moreover, the
existence of Harder-Narasimhan filtrations allows us to assume that
$\cG$ is a torsion-free $\mu_S$-semistable sheaf. Since $\what{\cE}$
is torsion-free and WIT$_1$-$\what\Phi$, by Proposition
\ref{prop:WIT classification} $\mu_S(\what\cE)<-1/h$ where the last
inequality is strict because $\cE$ is torsion-free. Then $\cG$ is
a $\mu_S$-semistable sheaf with $\mu_S(\cG)<-1/h$, so that $\cG$ is
WIT$_1$-$\what\Phi$ by Proposition \ref{prop:WIT
classification2}. By applying the inverse Fourier-Mukai transform
one obtains that $\cF$ is WIT$_1$-$\what\Phi$ as well and that there
is an exact sequence
$$0\longrightarrow\what\cF\longrightarrow\cE\longrightarrow\what\cG\longrightarrow 0\,.$$
Since $\cE$ is torsion-free, one has $r(\what\cF)\neq 0$ and
$1/h<\mu_S(\what\cF)\leq\mu_S(\cE)$ where the first inequality is
due to the fact that $\what\cF$ is WIT$_0$-$\Phi$ and the second is
by the $\mu_S$-semistability of $\cE.$ Since
$$\mu_S(\what\cE)=\frac{\mu_S(\cE)}{1-h\mu_S(\cE)}\quad\text{ and }\quad \mu_S(\cF)=\frac{\mu_S(\what\cF)}{1-h\mu_S(\what\cF)}\,,$$
one obtains $\mu_S(\cF)\leq\mu_S(\what\cE)$; this contradicts
$\mu_S(\cF)>\mu_S(\what\cE).$

\medskip
(3)  Suppose finally that $dh<r$. Let us prove that $\what\cE$ is a
torsion-free sheaf. Indeed, a torsion subsheaf $\cT\neq 0$ of
$\what\cE$ should necessary be WIT$_0$-$\what\Phi$ and
$\mu_S(\what\cT)=1/h.$ By applying $\what\Phi$ we obtain
$\what\cT\hookrightarrow\cE$ which contradicts the
$\mu_S$-semistability of $\cE$. If $\what\cE$ is not
$\mu_S$-semistable, as in the previous case we can assume the
existence of an exact  sequence
$$0\longrightarrow\cF\longrightarrow\what\cE\longrightarrow\cG\longrightarrow 0$$
with $\cF$ torsion-free, $\cG$ WIT$_0$-$\what\Phi$ and
$\mu_S(\cF)>\mu_S(\what\cE)>\mu_S(\cG).$ Moreover $\what\cE$ is
WIT$_0$-$\what\Phi$, so that $\mu_S(\cF)>\mu_S(\what\cE)>-1/h$. Then
$\cF$ is WIT$_0$-$\what\Phi$, by Proposition \ref{prop:WIT
classification2}. Applying the inverse Fourier-Mukai transform we get the
exact sequence
$$0\longrightarrow\what\cF\longrightarrow\cE\longrightarrow\what\cG\longrightarrow 0\, .$$
Since $\mu_S(\cF)>\mu_S(\what\cE)>-1/h$ we obtain
$$\mu_S(\what\cF)=\frac{\mu_S(\cF)}{1+h\mu_S(\cF)}> \frac{\mu_S(\what\cE)}{1+h\mu_S(\what\cE)}=\mu_S(\cE)\,,$$ but this contradicts the fact that $\cE$ is $\mu_S$-semistable.

An analogous discussion proves the result for the Fourier-Mukai
transform $\what{\Phi}$.
\end{proof}
The proof of the theorem implies the following consequences

\begin{cor} Let $\what{\cE}$ be  the transform of a $\mu_S$-semistable sheaf $\cE$ with respect to $\Phi$ (resp.~$\what{\Phi}$). The following holds \begin{enumerate}\item If $\cE$ is a torsion-free sheaf and $\mu_S(\cE)\neq 1/h$ (resp.~$-1/h$), then $\what{\cE}$ is also torsion-free.
\item If $\mu_S(\cE)\neq 1/h$ (resp.~$-1/h$) and
$\cE$ is $\mu_S$-stable, then $\what\cE$ is $\mu_S$-stable as well.
\item If $\mu_S(\cE)= 1/h$ (resp.~$-1/h$), then $\what{\cE}$ is $\mu_S$-stable if and only if $d=1$.
\end{enumerate}
\end{cor}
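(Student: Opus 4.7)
The plan is to prove the three parts by reading off the proof of Theorem \ref{t:preservation} and making the inequalities more careful; everything is stated for $\Phi$ and the case of $\what\Phi$ is analogous by symmetry.

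For (1), one inspects cases (2) and (3) of the proof of Theorem \ref{t:preservation}. In case (2), where $\mu_S(\cE)>1/h$, torsion-freeness of $\what\cE$ is established via Lemma \ref{prop:WIT1 is semistable}, using that $\what\cE$ is WIT$_1$-$\what\Phi$ and every subsheaf of a WIT$_1$ sheaf is WIT$_1$, while torsion sheaves are WIT$_0$. In case (3), where $\mu_S(\cE)<1/h$, a torsion subsheaf $\cT\hookrightarrow\what\cE$ would be WIT$_0$-$\what\Phi$ of slope $1/h$; applying $\what\Phi$ would give $\what\cT\hookrightarrow\cE$ with $\mu_S(\what\cT)=1/h>\mu_S(\cE)$, which contradicts semistability. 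Thus the excluded value $\mu_S(\cE)=1/h$ is precisely the one where $\what\cE$ becomes torsion.

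For (2), I would repeat the destabilizing-sequence argument of Theorem \ref{t:preservation} under the strict hypothesis of stability. Suppose $\what\cE$ is not $\mu_S$-stable; then there is a proper subsheaf $\cF\hookrightarrow\what\cE$ with $\mu_S(\cF)\ge\mu_S(\what\cE)$. Using Harder--Narasimhan and Jordan--Hölder filtrations, one can reduce to the case where $\cF$ and $\cG=\what\cE/\cF$ are both $\mu_S$-semistable. As in the proof of the theorem, the slope inequalities together with Propositions \ref{prop:WIT classification} and \ref{prop:WIT classification2} force $\cF$ and $\cG$ to share the same WIT index with respect to $\what\Phi$. Applying the inverse Fourier--Mukai transform then produces a short exact sequence $0\to\what\cF\to\cE\to\what\cG\to 0$, and the slope transformation formulas
\[\mu_S(\what\cE)=\frac{\mu_S(\cE)}{1-h\mu_S(\cE)},\qquad \mu_S(\cF)=\frac{\mu_S(\what\cF)}{1-h\mu_S(\what\cF)}\]
(together with their $\what\Phi$-analogues with $+h$) produce $\mu_S(\what\cF)\ge\mu_S(\cE)$, contradicting the stability of $\cE$. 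The only subtlety, and the main obstacle, is to track that the inequalities $\mu_S(\cF)\ge\mu_S(\what\cE)$ are preserved as strict under the Möbius-like slope transformation, which follows because the denominators have a definite sign in each regime.

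For (3), the hypothesis $\mu_S(\cE)=1/h$ means $r=dh$, and Proposition \ref{prop:RR} together with Proposition \ref{prop:WIT classification}(2) give that $\cE$ is WIT$_1$-$\Phi$ with $P_{\what\cE}(s)=d$; that is, $\what\cE$ is a torsion sheaf of length $d$. As remarked in Section~1, on a curve a nonzero torsion sheaf is $\mu_S$-stable if and only if it has no proper subsheaves, which forces it to be the skyscraper $\cO_x$ at a single point, i.e.\ $d=1$. Conversely, if $d=1$ then $\what\cE$ is a length one torsion sheaf, hence isomorphic to some $\cO_x$ and therefore $\mu_S$-stable. The argument for $\what\Phi$ is identical, replacing $1/h$ by $-1/h$.
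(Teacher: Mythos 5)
Your proposal is correct and follows essentially the same route as the paper, which states this corollary as a direct consequence of the proof of Theorem \ref{t:preservation} without further elaboration: part (1) is read off from cases (2) and (3) of that proof, part (2) re-runs the destabilizing-subobject argument with strict inequalities (the Möbius map $\mu\mapsto\mu/(1\mp h\mu)$ being monotonic on each branch), and part (3) identifies $\what\cE$ as a torsion sheaf of length $d$, stable precisely when $d=1$.
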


\begin{rem} \label{r:equiv} Assume that $\cE$ is semistable with
$\mu_S(\cE)= 1/h$ and $d>1$.  If $\cE$ is indecomposable (for instance, if it is stable), the transform $\what{\cE}$ is a  torsion sheaf and it is indecomposable by Proposition \ref{p:simple}; thus it is supported at a single
point $x\in X$.  When $x$ is a smooth point, then $\what{\cE}\simeq
\cO_{X,x}/\mathfrak{m}_x^d$. When $x$ is singular, the structure of torsion sheaves
supported at $x$ is much more complicated (see
\cite{BuKr04} for more details); one can see, however, by induction
on the length $d$, that $\what{\cE}$  is always $S$-equivalent to
$\oplus_{d}\cO_x$.
\end{rem}

Since the transform $\what{\cO_x}=\Phi(\cO_x)$ is the ideal sheaf
$\mathfrak m_x$ of the point $x$, and $\what\Phi(\cO_x)=\mathfrak
m_x^\ast$, we deduce the following stability result.

\begin{cor} If $X$ is a polarized Gorenstein curve of arithmetic genus one and trivial dualizing sheaf, then the maximal ideal $\mathfrak{m}_x$ for any point $x\in X$ and its dual $\mathfrak{m}_x^\ast$ are stable sheaves.
\end{cor}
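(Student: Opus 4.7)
The plan is to reduce the stability of $\mathfrak m_x$ and $\mathfrak m_x^\ast$ to the fact that the skyscraper $\cO_x$ admits no proper nonzero subsheaf, transported through the equivalences $\Phi$ and $\what\Phi$. I will concentrate on $\mathfrak m_x=\Phi(\cO_x)$; the case of $\mathfrak m_x^\ast=\what\Phi(\cO_x)$ follows by the symmetric argument, with the roles of $\Phi$ and $\what\Phi$ interchanged and Proposition \ref{prop:WIT classification} replacing Proposition \ref{prop:WIT classification2}.

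First I record the slope and semistability of $\mathfrak m_x$: the torsion sheaf $\cO_x$ has Hilbert polynomial $P_{\cO_x}(s)=1$, so by Proposition \ref{prop:RR} one has $P_{\mathfrak m_x}(s)=hs-1$ and hence $\mu_S(\mathfrak m_x)=-1/h$. Since $\cO_x$ is trivially $\mu_S$-semistable, Theorem \ref{t:preservation} yields that $\mathfrak m_x$ is $\mu_S$-semistable. Arguing by contradiction, assume $\mathfrak m_x$ is not stable and choose a proper exact sequence
\[
0\to\cF\to\mathfrak m_x\to\cG\to 0
\]
with $\cF$ a $\mu_S$-semistable subsheaf satisfying $\mu_S(\cF)=-1/h$. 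Because $\mathfrak m_x\subset\cO_X$ and $\cO_X$ is pure on the Gorenstein (hence Cohen--Macaulay) curve $X$, the sheaf $\mathfrak m_x$ is torsion-free; if $\cG$ carried a nonzero torsion subsheaf its preimage in $\mathfrak m_x$ would strictly destabilize $\mathfrak m_x$, contradicting semistability. Hence $\cG$ is torsion-free and $\mu_S$-semistable of slope $-1/h$ as well.

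The core step is the transport via $\what\Phi$. By Proposition \ref{prop:WIT classification2} both $\cF$ and $\cG$ are WIT$_1$-$\what\Phi$, and $\mathfrak m_x$ itself is WIT$_1$-$\what\Phi$ with transform $\cO_x$, since $\what\Phi\circ\Phi\simeq[-1]$. Applying $\what\Phi$ to the displayed sequence, the resulting distinguished triangle collapses (after a shift by one) to a short exact sequence of sheaves
\[
0\to\what\cF\to\cO_x\to\what\cG\to 0.
\]
Since $\cO_x$ has length one and therefore no proper nonzero subsheaves, either $\what\cF=0$ or $\what\cG=0$; inverting the transform then forces $\cF=0$ or $\cG=0$, each contradicting the original destabilizing sequence. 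Thus $\mathfrak m_x$ is $\mu_S$-stable.

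I expect the main obstacle to be the torsion-freeness of $\cG$: without it Proposition \ref{prop:WIT classification2} cannot be invoked for $\cG$ and the WIT collapse to the three-term sequence above breaks down. The purity of $\cO_X$ on the Gorenstein curve $X$, inherited by $\mathfrak m_x$, is exactly what removes this obstacle; everything else is a formal consequence of the equivalence $\Phi$ combined with the simplicity of $\cO_x$ viewed as an object of the abelian category of coherent sheaves.
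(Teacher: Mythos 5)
Your proof is correct and follows essentially the same route as the paper: the paper obtains the statement as an immediate application of the preceding corollary (the transform of a $\mu_S$-stable sheaf of slope $\neq 1/h$, resp.\ $\neq -1/h$, is $\mu_S$-stable) to the stable torsion sheaf $\cO_x$, whose transforms under $\Phi$ and $\what\Phi$ are $\mathfrak m_x$ and $\mathfrak m_x^\ast$. What you have done is unwind that corollary's proof in this special case — transporting a would-be destabilizing sequence back through the equivalence via the WIT classification of Propositions \ref{prop:WIT classification} and \ref{prop:WIT classification2} and concluding from the simplicity of $\cO_x$ — which is exactly the mechanism underlying the paper's argument.
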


\begin{rem} When $X$ is irreducible, this is a trivial fact. For Gorenstein reducible curves of arithmetic genus bigger or equal than 2, the semistability of $\mathfrak m_x$ for an arbitrary point $x$ of $X$ has been recently proved  in \cite{CCE07} using different techniques.
\end{rem}

The equivalences $\Phi$ and $\Psi$ define scheme isomorphisms
between the corresponding moduli spaces:
\begin{cor}\label{c:isom} Let $(r,d)$ be a pair of integers with $r\geq 0$.
\begin{enumerate}\item The Fourier-Mukai functors $\Phi$ and $\Psi$ induce scheme isomorphisms of moduli spaces
\begin{align*}
\cM_X(r,d)& \simeq \cM_X(dh-r,-d)\quad \text{ for $d/r >1/h$,}
\\
\cM_X(r,d)& \simeq \cM_X(-dh+r,d)\quad \text{for $d/r \leq 1/h$, and}
\\
\cM_X(r,d)& \simeq \cM_X(r, d+r)\, .
\end{align*}
\item The moduli space $\cM_X(r,d)$ is isomorphic either to $\cM_X(0,d_0)\simeq Sym^{d_0}(X)$ with $d_0>0$,
or to  $\cM_X(r_0,0)$ with $r_0>0$  or to $\cM_X(r_0,d_0)$ with
$2r_0/h\leq d_0< r_0$.
\end{enumerate}
\end{cor}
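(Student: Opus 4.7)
The plan is to combine Theorem \ref{t:preservation}, Proposition \ref{prop:RR}, and Proposition \ref{prop:WIT classification}. For part (1), under the hypothesis $d/r>1/h$ every $\mu_S$-semistable sheaf $\cE$ with Hilbert polynomial $rs+d$ is WIT$_0$-$\Phi$ by Proposition \ref{prop:WIT classification}, so $\Phi(\cE)=\what\cE$ is an honest sheaf with Hilbert polynomial $(dh-r)s-d$ by Proposition \ref{prop:RR}, and Theorem \ref{t:preservation} guarantees that it is again $\mu_S$-semistable; this yields the first isomorphism at the level of $k$-points. Under $d/r\leq 1/h$ the sheaf $\cE$ is instead WIT$_1$-$\Phi$, so $\Phi(\cE)\simeq\what\cE[-1]$; the Euler characteristic is negated under the shift, producing Hilbert polynomial $(r-dh)s+d$ for $\what\cE$, which gives the second. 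The third isomorphism is trivial because $\Psi$ is tensoring with the line bundle $\cO_X(H)$.

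To upgrade these bijections of $k$-points into scheme isomorphisms I would use the relative versions of the Fourier-Mukai transforms. For a $T$-flat family $\cE_T$ of $\mu_S$-semistable sheaves with fixed Hilbert polynomial $rs+d$, the WIT index is constant along the fibres (it depends only on the sign of $d/r-1/h$), so cohomology and base change imply that the relative transform is concentrated in a single cohomological degree and is itself $T$-flat with semistable fibres. This produces a morphism between the corresponding moduli presheaves of families, which descends through the universal property of Simpson's coarse moduli spaces to a scheme morphism $\cM_X(r,d)\to\cM_X(r',d')$; the existence of a quasi-inverse Fourier-Mukai transform makes it an isomorphism of schemes, and the $\Psi$ case is obvious.

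Part (2) is proved by induction on $r$. For the base case $r=0$, any $\mu_S$-semistable sheaf with constant Hilbert polynomial $d_0$ is a torsion sheaf of length $d_0$, $S$-equivalent to $\bigoplus_{i=1}^{d_0}\cO_{x_i}$, so $\cM_X(0,d_0)\simeq Sym^{d_0}X$. Assume $r>0$. Using $\Psi^{\pm 1}$ repeatedly one brings $d$ into the range $0\leq d<r$. If $d=0$ we are in the second case of the statement, and if $2r/h\leq d<r$ we are in the third. Otherwise $0<d<2r/h$, and the appropriate variant of part (1) sends $(r,d)$ to either $(dh-r,-d)$ when $r/h<d<2r/h$ or to $(r-dh,d)$ when $0<d\leq r/h$; in both cases the new rank is strictly smaller than $r$, or equals zero (in which case we are back in the base case). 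By induction the reduction terminates in one of the three listed forms.

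The principal obstacle is the scheme-theoretic upgrade in the second paragraph, since one must check that the relative Fourier-Mukai transforms preserve $T$-flatness for families of semistable sheaves with fixed Hilbert polynomial. Once the uniform WIT property along fibres has been established from Proposition \ref{prop:WIT classification}, this becomes a standard application of cohomology and base change, and the conclusion follows formally from the universal property of Simpson's coarse moduli schemes.
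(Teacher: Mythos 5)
Your proof is correct and takes essentially the same route as the paper: part (1) is the standard family-level Fourier--Mukai argument (uniform WIT index, base change, universal property of the coarse moduli space) that the paper simply outsources to \cite[Corollary~2.65]{BBH08}, and your induction on the rank in part (2) is the same reduction the paper phrases as choosing a pair of minimal rank, and then minimal degree in $[0,r_0)$, among all pairs reachable via the isomorphisms of (1). The only step where you are lighter than needed is $\cM_X(0,d_0)\simeq Sym^{d_0}(X)$: identifying the $S$-equivalence classes of torsion sheaves gives a bijection on closed points, but the scheme isomorphism requires an actual classifying morphism, for which the paper appeals to Remark~\ref{r:equiv} and the argument of \cite[Corollary~3.33]{BBH08}.
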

\begin{proof} For the first part see for instance \cite[Corollary~2.65]{BBH08}. Let us prove the second one.
By Remark \ref{r:equiv} and arguing as in the proof of \cite[Corollary~3.33]{BBH08}, one proves that $\cM_X(0,d)\simeq Sym^d(X)$. Consider the family $B$ of all pairs of integers $(r', d')$ with
$r' \geq 0$ that are related with $(r,d)$ by the isomorphisms in (1).
Take $(r_0,d_0)$ in $B$ such that $r_0 \ge 0$ is the minimum $r'$ among the pairs in $B$. If $r_0=0$, then $d_0>0$ and $\cM_X(r,d)\simeq \cM_X(0,d_0)$.
Assume then that $r_0>0$. Since by
applying the equivalence $\Psi$ it is possible to increase $d$ and
by applying $\what{\Psi}$ to decrease it, if one considers now  $C$
as the family of all pairs in $B$ with $r'=r_0$, one can choose
$d_0$ as the minimum $d'$ among pairs in $C$ such that $0\leq d_0<r_0$. If $d_0=0$, then $\cM_X(r,d)\equiv \cM_X(r_0,0)$. If $d_0>0$, we claim that all the sheaves in $\cM_X(r_0,d_0)$ are
WIT$_0$-$\Phi$. Indeed, otherwise  $r_0\leq r_0-d_0h$ by the choice
of $r_0$ and then $d_0<0$ which contradicts our choice of $d_0$.
Then, it has to be $r_0\leq d_0h-r_0$ so that $2r_0/h\leq d_0<r_0$
and the proof is complete.
\end{proof}

\begin{rem}\label{r:2com} Notice that if $X$ has only two irreducible components, as it happens for the Kodaira
fibers $E_2$ (cf.~Figure \ref{fig:e2}) and $III$, and the
polarization $H$ has degree $h=2$, Corollary \ref{c:isom} reduces the
study of the moduli spaces $\cM_X(r,d)$ just to the case $d=0$. Some
results in this case can be found in the next section.
\end{rem}

\begin{rem} If $X$ is irreducible, we can take $h=1$, so that $r$ and $d$ are the usual rank and degree. In this situation, the last case in (2) of Corollary \ref{c:isom} does not occur; moreover, there is an isomorphism $\cM(r_0,0)\simeq \cM_X(0,r_0)$. We get then that $\cM(r,d)\simeq Sym^{r_0}(X)$. Using the transforms $\Phi$ and $\Psi$ and the Euclid algorithm, one can see that $r_0=\gcd(r,d)$, as proven in \cite[Chapter 6]{BBH08} by generalizing an argument described for smooth elliptic curves by Bridgeland \cite{Bri98} and Polishchuk \cite{Pol03}. A consequence is that there are no stable sheaves on $X$ if $\gcd(r,d)>1$. As already mentioned in the introduction, a complete description of these moduli spaces can be found in \cite[Chapter 6]{BBH08}.
\end{rem}

\subsection{Relative moduli spaces} Let $p\colon S\to B$ a genus one fibration, that is, a projective Gorenstein morphism whose fibers are curves of arithmetic genus one and trivial dualizing sheaf but without further assumptions on $S$, $B$ or the fibers; in particular, non-reduced and non-irreducible fibers are allowed. Consider the relative integral functor
$$
\Phi=\fmf{\cI_\Delta}{S}{S}\colon \cdbc{S}\to  \cdbc{S}\,,
$$
with kernel the ideal sheaf $ \cI_\Delta$ of the relative diagonal
immersion $\delta\colon S \hookrightarrow S\times_B S$. By \cite[Proposition~2.16]{HLS08}, it is an equivalence of
categories.

Fix a relative polarization $\mathcal{H}$ on the fibers of $p$ and
denote $\hat{p}\colon \cM_{S/B}(r,d)\to B$ the relative coarse
moduli space of $\mu_S$-semistable sheaves on the fibers (with
respect to the induced polarization) that have Hilbert polynomial
$P(s)=rs+d$. Closed points of the fiber
$\hat{p}^{-1}(b)=\cM_{S_b}(r,d)$ represent $S$-equivalence classes
of $\mu_S$-semistable sheaves on the fiber $S_b$ with Hilbert
polynomial $P(s)=rs+d$. Denote also $\Psi$ the relative
auto-equivalence of $\cdbc{S}$ given by twisting by the line bundle
$\cO_{S/B}(\mathcal{H})$.

Taking into account \cite[Corollary~6.3]{BBH08} and Corollary \ref{c:isom}, we get:
\begin{cor}\label{c:isom2}  Let $(r,d)$ be a pair of integers with $r\geq 0$.
\begin{enumerate}\item The Fourier-Mukai functors $\Phi$ and $\Psi$ induce scheme isomorphisms of moduli spaces
\begin{align*}
\cM_{S/B}(r,d)& \simeq \cM_{S/B}(dh-r,-d) \quad \text{for $d/r > 1/h$, }
\\
\cM_{S/B}(r,d)& \simeq \cM_{S/B}(-dh+r,d) \quad \text{for $d/r \leq 1/h$, and}\\
\cM_{S/B}(r,d)&\simeq \cM_{S/B}(r, d+r)\, .
\end{align*}
\item The moduli space $\cM_{S/B}(r,d)$ is isomorphic either to $\cM_{S/B}(0,d_0)\simeq Sym^{d_0}(S/B)$ with $d_0>0$, or to  $\cM_{S/B}(r_0,0)$ with $r_0>0$  or to $\cM_{S/B}(r_0,d_0)$ with $2r_0/h\leq d_0< r_0$.
\end{enumerate}

\end{cor}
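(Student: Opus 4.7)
The plan is to deduce the relative statement from its absolute counterpart (Corollary \ref{c:isom}) by a fiberwise argument, using \cite[Corollary~6.3]{BBH08} as the technical bridge between the absolute and the relative setting.

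First, I would invoke base-change to observe that the relative Fourier-Mukai transforms $\Phi=\fmf{\cI_\Delta}{S}{S}$ and $\Psi$ restrict, on each fiber $S_b$, to the absolute functors with kernels $\cI_{\Delta_b}$ and $\delta_{b\ast}\cO_{S_b}(\mathcal{H}_b)$ respectively, and that they are relative equivalences by \cite[Proposition~2.16]{HLS08}. By Theorem \ref{t:preservation} applied on every fiber, $\Phi$ sends $\mu_S$-semistable sheaves to $\mu_S$-semistable sheaves (up to a shift controlled by the WIT index, which is constant in families of semistable sheaves by Propositions \ref{prop:WIT classification} and \ref{prop:WIT classification2}); twisting by $\cO_{S_b}(\mathcal{H}_b)$ preserves semistability trivially. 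The shift in Hilbert polynomials is computed fiberwise from Proposition \ref{prop:RR}, giving precisely the three formulas in (1).

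Second, I would lift these fiberwise statements to morphisms of the relative coarse moduli $\cM_{S/B}(r,d)$. The cited \cite[Corollary~6.3]{BBH08} is set up exactly for this situation: given a relative integral functor that is a fiberwise equivalence, that preserves semistability fiberwise, and such that every semistable object is WIT with the same index on every fiber, the transform of a flat family of semistable sheaves (with fixed Hilbert polynomial $P$) is again a flat family of semistable sheaves (with fixed transformed polynomial $P'$), and the resulting transformation of families factors through the moduli functors. Applying this to $\Phi$ and $\Psi$ yields morphisms between the three pairs of moduli spaces in (1); applying it to the quasi-inverses $\widehat{\Phi}$ and $\widehat{\Psi}$ yields inverse morphisms, hence scheme isomorphisms.

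For part (2), the reduction proceeds verbatim as in the proof of Corollary \ref{c:isom}: starting from an arbitrary pair $(r,d)$ with $r \ge 0$, consider the orbit $B$ of pairs obtained by iterating the three isomorphisms of (1), pick $(r_0,d_0)\in B$ with $r_0$ minimal and then, among pairs in $B$ with first coordinate $r_0$, pick the one with $0\le d_0 < r_0$ minimal. If $r_0=0$ we are in the first case; if $r_0>0$ and $d_0=0$ in the second; and otherwise the minimality forces $r_0\le d_0h-r_0$, i.e.\ $2r_0/h \le d_0 < r_0$. The relative identification $\cM_{S/B}(0,d_0)\simeq Sym^{d_0}(S/B)$ is the relative Hilbert--Chow morphism: a flat family of pure dimension zero sheaves of length $d_0$ on the fibers of $p$ is, fiberwise, $S$-equivalent to a sum of skyscrapers, and the assignment to the corresponding effective relative divisor defines an isomorphism between the two $B$-schemes (this is the relative version of the absolute statement $\cM_X(0,d_0)\simeq Sym^{d_0}(X)$ established in the proof of Corollary \ref{c:isom}).

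The main obstacle is the family-theoretic lifting in the second step: one must know that semistability, the WIT index, and the transformed Hilbert polynomial are all constant in $b\in B$ for a flat family of semistable sheaves, so that the transformed family is again flat with the prescribed numerical invariants. This is the content of \cite[Corollary~6.3]{BBH08}, which we are entitled to invoke because its hypotheses, namely that $\Phi$ is a relative equivalence preserving fiberwise semistability together with the WIT dichotomy of Proposition \ref{prop:WIT classification}, have just been verified. Once this lifting is in place, the rest of the argument is formal.
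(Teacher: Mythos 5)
Your proposal is correct and follows essentially the same route as the paper, whose entire proof is the single sentence ``Taking into account \cite[Corollary~6.3]{BBH08} and Corollary \ref{c:isom}, we get:'' --- you have simply spelled out the fiberwise verification of the hypotheses of that citation (relative equivalence, fiberwise preservation of semistability via Theorem \ref{t:preservation}, constancy of the WIT index from Propositions \ref{prop:WIT classification} and \ref{prop:WIT classification2}, and the Hilbert polynomial computation of Proposition \ref{prop:RR}) that the authors leave implicit. No discrepancy to report.
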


\section{Moduli spaces of degree zero sheaves for $E_N$} In this section, we give a description of
the connected component of the moduli space $\cM_X(r,0)$ with $r>0$
containing vector bundles when $X$ is a curve of type $E_N$,  that
is, a cycle of $N$ projective lines (cf.~Figure \ref{fig:en}). The
description is achieved by combining two different ingredients; the
first one is the description of indecomposable torsion-free sheaves
on cycles $E_N$ given in \cite{DG01,BBDG07}, and the second one is
the description of (semi)stable line bundles on tree-like curves and
cycles carried out by one of the authors in \cite{LM05a,LM05}.

\begin{figure}[htbp] 
   \centering
   \includegraphics[width=1.5in]{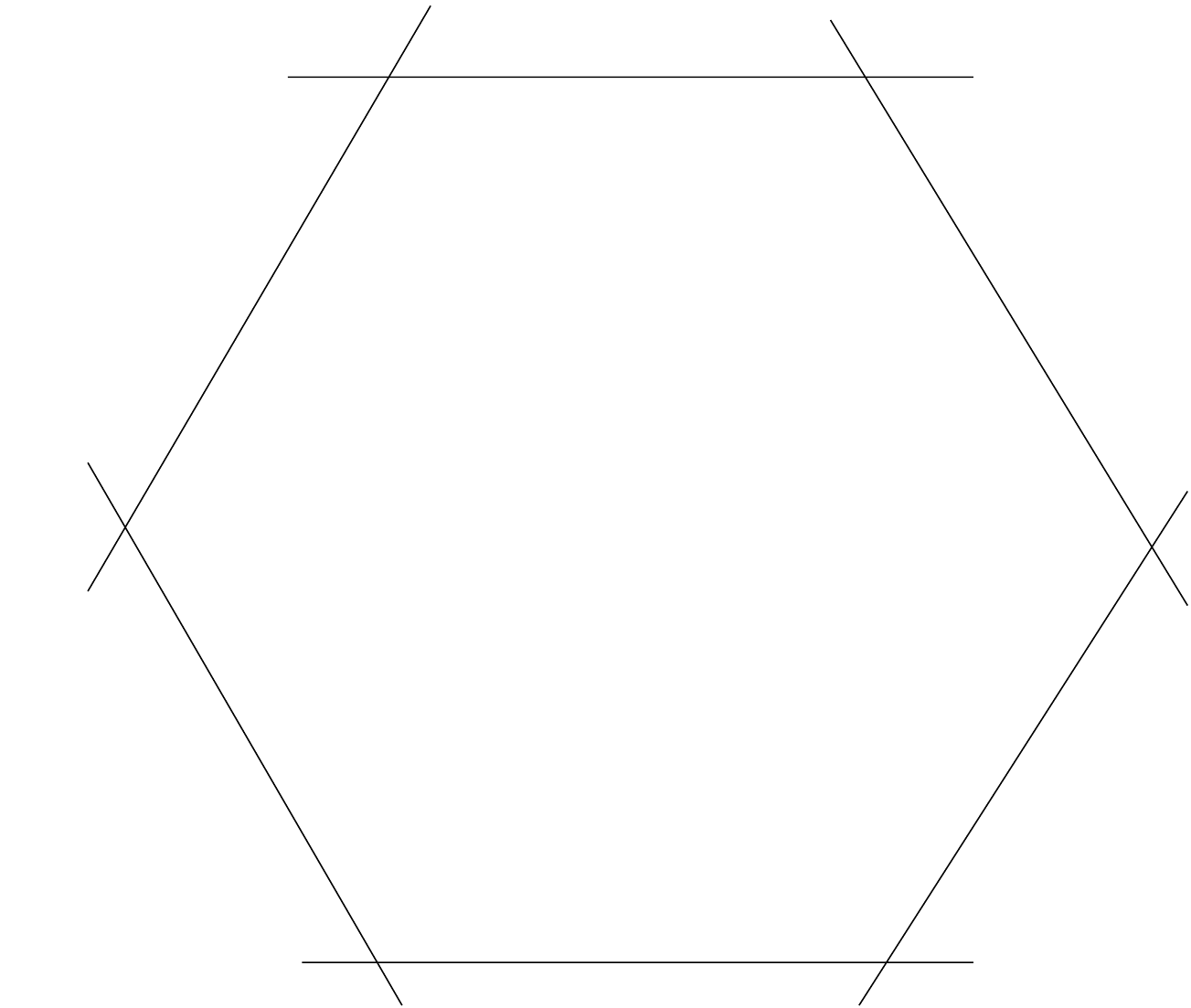}
   \caption{The curve $E_6$}
   \label{fig:en}
\end{figure}

\subsection{Coherent sheaves on reducible curves}
We collect some results about coherent sheaves on reducible curves.
Let $X$ be any projective connected and reduced curve over an
algebraically closed field $k$. Denote by  $C_1,\dots,C_N$ the
irreducible components of $X$ and by $x_1,\dots,x_k$ the
intersection points of $C_1,\dots,C_N$. Let $\cE$ a coherent sheaf
on $X$ and denote
$\cE_{C_i}=(\cE\otimes\mathcal{O}_{C_i})/\text{torsion}$ its
restriction to $C_i$ modulo torsion. Let $r_i=r_i(\cE)$ and $d_i=d_i(\cE)$ be the rank
and the degree of $\cE_{C_i}$.
\begin{defin} The  \emph{multirank}  and  \emph{multidegree} of a coherent sheaf $\cE$ on $X$ are the
$N$-tuples $\underline{r}(\cE)=(r_1,\dots, r_N)$ and
$\underline{d}(\cE)=(d_1,\dots, d_N)$.
\end{defin}
Let $H$ be a polarization on $X$ of degree $h$ and denote by $h_i$
the degree of $H$ on $C_i$. As in \cite{Ses82}, for any pure
dimension one sheaf $\cE$ on $X$, there is an exact sequence
\begin{equation}\label{exactseq} 0\to \cE \to \cE_{C_1}\oplus \dots
\oplus \cE_{C_N}\to T \to 0\,,
\end{equation} where $T$ is a torsion sheaf whose
support is contained in the set $\{ x_1,\dots,x_k\}$. The first arrow in the sequence \eqref{exactseq} is the composition of the canonical morphisms $\cE \to \pi_\ast(\pi^\ast\cE) \to  \pi_\ast(\pi^\ast\cE/tors)$, where $\pi$ is the normalization morphism of $X$.

Let $P_{\cE}(s)=rs+d$ be the Hilbert polynomial of $\cE$ with
respect to $H$. Since the Hilbert polynomial is additive, from the
above exact sequence, one obtains that
$$r=r_1h_1+\dots+r_Nh_N\, .$$ Hence, there is a natural decomposition
\begin{equation}\label{e:des}\cM_X(r,d)=\coprod_{\underline{r} \in \mathbb{Z}^N}
\cM_X(\underline{r},d)
\end{equation}where $\cM_X(\underline{r},d)$ is the moduli space of those semistable
sheaves $\cE$ on $X$ of multirank $\underline{r}$ and the union runs
over all  $N$-tuples $\underline{r}=(r_1,\dots,r_N)$ of
non-negative integers such that $r=r_1h_1+\dots+r_Nh_N$.

This decomposition becomes necessary in the analysis of some moduli
spaces $\cM_X(r,d)$ because, as it was mentioned in Remark
\ref{r:linearity}, if $\Gamma$ is an equivalence of $\cdbc{X}$ and
$\cE$ is a WIT-$\Gamma$ sheaf, it is not true in general that the
coefficients of the Hilbert polynomial of $\Gamma(\cE)$ are linear
functions of the coefficients of the Hilbert polynomial of $\cE$.
This means that the action of an arbitrary equivalence of $\cdbc{X}$
does not send all connected components of the moduli space
$\cM_X(r,d)$ into the same moduli space $\cM_X(r',d')$. Here we have
an example of this fact.

\begin{exe}\label{e:nonlinear} Let $X$ be a curve of type $E_2$, that is,
two rational curves $C_1$ and $C_2$ meeting transversally at two
points with a polarization $H$ such that $h_1=h_2=1$ (Figure \ref{fig:e2}). Let $\calL$ be
a line bundle on $X$ with multidegree
$\underline{d}(\calL)=(d_1,d_2)$ with $d_1\neq d_2$ and take
$\Gamma=\fmf{\delta_\ast \calL}{X}{X}$ the equivalence defined by
twisting by $\calL$. If $\cE$ is a coherent sheaf on $X$ with
Hilbert polynomial $P_{\cE}(s)=rs+d$ and multirank
$\underline{r}(\cE)=(r_1,r_2)$, the Hilbert polynomial of
$\Gamma(\cE)$ is equal to $rs+d+(r_1d_1+r_2d_2)$. Thus, the
connected component $\cM_X((r,0),d)\subseteq \cM_X(r,d)$ is sent by
$\Gamma$ into the moduli space $\cM_X(r, d+rd_1)$ while the
component $\cM_X((0,r),d)$ is mapped into $\cM(r, d+rd_2)$.
\end{exe}
The following proposition provides better invariants. If
$K(\cdbc{X})$ denotes the Grothendieck group of the triangulated
category $\cdbc{X}$ (cf.~\cite{Gro77}), one has $K(\cdbc{X})\simeq
K(\text{Coh}(X))$ and this group is usually denoted $K_\bullet(X)$.

\begin{prop} \label{p:grupoK} Let $X$ be any reduced connected and projective curve. If every irreducible
component of $X$ is isomorphic to $\mathbb{P}^1$ the projective
line, then there is an isomorphism $K_\bullet(X)\simeq
\mathbb{Z}^{N+1}$ where $N$ is the number of irreducible components
of $X$.
Moreover the above isomorphism is defined by sending the class of
any coherent sheaf $[\cE]$ in $K_\bullet(X)$ to
$(\underline{r}(\cE), \chi(\cE))\in \mathbb{Z}^{N+1}$.
\end{prop}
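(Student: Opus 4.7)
The plan is to construct the homomorphism $\phi\colon K_\bullet(X)\to \mathbb{Z}^{N+1}$ by $\phi([\mathcal{E}])=(\underline{r}(\mathcal{E}),\chi(\mathcal{E}))$, verify it is a well-defined surjection, and then prove injectivity by showing that $K_\bullet(X)$ is generated by $N+1$ classes whose images form a $\mathbb{Z}$-basis. For well-definedness, $\chi$ is additive on short exact sequences, and the $i$-th component of $\underline{r}(\mathcal{E})$ coincides with the length of the stalk $\mathcal{E}_{\eta_i}$ at the generic point $\eta_i$ of $C_i$, hence is additive because localization is exact. Surjectivity follows by noting that $\phi([\mathcal{O}_{C_i}])=(e_i,1)$ and $\phi([\mathcal{O}_x])=(0,1)$ for any closed point $x$, and these $N+1$ vectors form a basis of $\mathbb{Z}^{N+1}$.

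For injectivity I would prove that $K_\bullet(X)$ is generated by $[\mathcal{O}_{C_1}],\ldots,[\mathcal{O}_{C_N}]$ and a single class $\eta:=[\mathcal{O}_x]$, which will force $\phi$ to be an isomorphism since its image has the same rank. The first task is to show that $[\mathcal{O}_x]$ does not depend on $x$. For any point $x$ on a component $C_i\cong\mathbb{P}^1$ (smooth or nodal in $X$), the exact sequence $0\to \mathcal{O}_{C_i}\to \mathcal{O}_{C_i}(x)\to \mathcal{O}_x\to 0$ on $C_i$, pushed forward along the closed immersion $\iota_i\colon C_i\hookrightarrow X$, yields $[\mathcal{O}_x]=[(\iota_i)_\ast\mathcal{O}_{C_i}(1)]-[\mathcal{O}_{C_i}]$ in $K_\bullet(X)$, which is independent of $x\in C_i$. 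If $x_j$ is a node lying on both $C_i$ and $C_{i'}$, the same argument with $\iota_{i'}$ gives the equality of the corresponding constants for $C_i$ and $C_{i'}$; since $X$ is connected, the constant $\eta$ is independent of the component as well.

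Next, any torsion sheaf $T$ has a filtration whose quotients are skyscrapers $\mathcal{O}_x$, so $[T]=\chi(T)\,\eta$. Any torsion-free coherent sheaf $\mathcal{F}_i$ on $C_i\cong\mathbb{P}^1$ splits, by Grothendieck's theorem, as $\bigoplus_j\mathcal{O}_{C_i}(a_{ij})$; using $[\mathcal{O}_{C_i}(a)]=[\mathcal{O}_{C_i}]+a\eta$ in $K_\bullet(X)$, we find $[(\iota_i)_\ast\mathcal{F}_i]\in \mathbb{Z}\eta+\mathbb{Z}[\mathcal{O}_{C_i}]$. For a pure dimension one sheaf $\mathcal{E}$, the exact sequence \eqref{exactseq} gives $[\mathcal{E}]=\sum_i[(\iota_i)_\ast\mathcal{E}_{C_i}]-[T]$, so $[\mathcal{E}]$ lies in the required span. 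Finally, an arbitrary coherent sheaf $\mathcal{E}$ fits in $0\to \mathcal{T}(\mathcal{E})\to \mathcal{E}\to \mathcal{E}/\mathcal{T}(\mathcal{E})\to 0$, reducing to the two previous cases. The main technical obstacle is showing the independence of $[\mathcal{O}_x]$ on $x$ across different components and at nodes; everything else is routine d\'evissage, and this step is handled by exploiting connectedness together with the fact that each component is a smooth rational curve on which any two points are linearly equivalent.
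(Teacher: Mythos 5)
Your proof is correct, and it takes a genuinely different route from the paper's. The paper obtains the generators $[\mathcal{O}_{C_1}],\dots,[\mathcal{O}_{C_N}],[\mathcal{O}_x]$ of $K_\bullet(X)$ by invoking the surjectivity of $\pi_*$ for the normalization (a Chow envelope, \cite[Lemma~18.3]{Ful}), and then proves that these generators are actually a basis by means of the Riemann--Roch transformation $\tau_X\colon K_\bullet(X)\to A_\bullet(X)\otimes_{\mathbb Z}\mathbb{Q}$ together with the computation $A_0(X)=\mathbb{Z}[x]$, $A_1(X)=\bigoplus_i\mathbb{Z}[C_i]$; the identification with $(\underline{r},\chi)$ is then read off on the basis. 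You instead establish generation by an explicit d\'evissage (torsion subsheaf, the sequence \eqref{exactseq}, Grothendieck's splitting on each component, and the rational equivalence of points on a connected curve with rational components to pin down $[\mathcal{O}_x]$), and you replace the Riemann--Roch step entirely by the observation that the explicit homomorphism $(\underline{r},\chi)$ sends your $N+1$ generators to a $\mathbb{Z}$-basis of $\mathbb{Z}^{N+1}$, which gives freeness, injectivity and the identification in one stroke. Your argument is more elementary and self-contained (no singular Riemann--Roch, no Chow envelopes) and has the merit of proving the ``moreover'' clause as part of the main construction rather than as an afterthought; the paper's argument is shorter given the cited machinery and computes $A_\bullet(X)$ along the way. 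One small point worth making explicit in your write-up: the additivity of $r_i$ via the length of the stalk at the generic point $\eta_i$ uses that $X$ is reduced, so that $\mathcal{O}_{X,\eta_i}$ is the function field of $C_i$; this is part of the hypothesis, so there is no gap.
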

\begin{proof} Let us denote by $C_1,\dots, C_N$ the irreducible components of the curve
$X$. Since the curve is connected and its irreducible components are
isomorphic to $\mathbb{P}^1$ we conclude that any two points are
rationally equivalent, that is $A_0(X)=\bbZ[x]$, where $[x]$ is the
class of a point of $X$. On the other hand it is well known
\cite[Example 1.3.2]{Ful} that the $n$-th Chow group of an
$n$-dimensional scheme is the free abelian group on its
$n$-dimensional irreducible components, therefore
$A_1=\bbZ[C_1]\oplus\dots \oplus\bbZ[C_N]$.

The normalization $\pi\colon \widetilde X \to X$ is a Chow envelope
of $X$, thus $\pi_*\colon K_\bullet(\widetilde X) \to K_\bullet(X)$
is surjective \cite[Lemma~18.3]{Ful}. Since $\widetilde X=\widetilde
C_1\coprod\dots\coprod\widetilde C_N$ one has
$$K_\bullet(\widetilde X)\simeq K_\bullet(\widetilde
C_1)\oplus\dots\oplus K_\bullet(\widetilde C_N).$$ Taking into
account that $\widetilde C_i\simeq\bbP^1$ we get
$K_\bullet(\widetilde X)=\bigoplus_{i=1}^N\bbZ[\cO_{\widetilde
C_i}]\oplus\bbZ[\cO_{\tilde x_i}]$, where $\tilde x_i\in\widetilde
C_i$. Given any two points $\tilde x_i$, $\tilde y_i$ in $\widetilde
C_i\simeq\bbP^1$ we know that $[\cO_{\tilde x_i}]=[\cO_{\tilde
y_i}]$. Therefore, since $X$ is connected, the surjectivity of
$\pi_*$ implies that $K_\bullet(X)$ is generated by
$$[\cO_{C_1}],\dots,[\cO_{C_N}], [\cO_{x}].$$
By the Riemann-Roch theorem for algebraic schemes \cite[Theorem
18.3]{Ful} there is a homomorphism $\tau_X\colon K_\bullet(X)\to
A_\bullet(X)\otimes_\bbZ\bbQ$ with the following properties:

\begin{enumerate}
\item  For any $k$-dimensional closed subvariety $Y$ of $X$, one has
$\tau_X([\cO_Y])=[Y]+ \mathrm{terms\ of\ dimension} < k$,
\item $(\tau_X)_\bbQ\colon
K_\bullet(X)\otimes_\bbZ\bbQ\xrightarrow{\sim}
A_\bullet(X)\otimes_\bbZ\bbQ$ is an isomorphism.
\end{enumerate}
Using the first
property one easily sees that
$\{\tau_X([\cO_{C_1}]),\dots,
\tau_X([\cO_{C_N}]),\tau_X([\cO_{x}])\}$ is a basis of
$A_\bullet(X)\otimes_\bbZ\bbQ$. Taking into account that
$\{[\cO_{C_1}],\dots,[\cO_{C_N}], [\cO_{x}]\}$ is a system of
generators of $K_\bullet(X)$, the second property of $\tau_X$
implies that it is also a basis.

The final statement follows now straightforwardly by applying the
integer valued mapping $(\underline{r}(-), \chi(-))$ to the basis of
$K_\bullet(X)$ above constructed.
\end{proof}

All the functions defining  the isomorphism of Proposition \ref{p:grupoK}, that is, the ranks
$r_i$ and the Euler characteristic
$\chi$, are additive on exact triangles of $\cdbc{X}$. Hence, any
equivalence  $\Gamma$ of $\cdbc{X}$ induces a group automorphism
$\gamma$ of $K_\bullet(X)$, such that there is a commutative square
$$
   \xymatrix{
       \cdbc{X}\ar[r]^{\Gamma} \ar[d] & \cdbc{X} \ar[d]\\
       K_\bullet(X)\ar[r]^{\gamma} & K_\bullet(X)
       }
$$
where the vertical arrows are the natural ones.

Note that if $\cE$ is a vector bundle on $X$, then $\sum_{i=1}^N
d_i=d$. Thus, the category of vector bundles on $X$ of rank $r$ and
degree $d$ (whose Hilbert polynomial is $P_{\cE}(s)=rhs+d$)
decomposes as
$$\text{VB}_X(r, d)\simeq \coprod_{\underline{d}\in\mathbb{Z}^N}\text{VB}_X(r,\underline{d})\,,$$
where now the union runs over all $\underline{d}=(d_1,\dots,d_N)\in
\mathbb{Z}^N$ such that $\sum_{i=1}^N d_i=d$. However, since for
non-locally free sheaves it is not true that $\sum_{i=1}^N d_i=d$,
there is not a similar  decomposition for the moduli space
$\cM_X(r,d)$.

To finish this subsection, let us show how $\mu_S$-semistability
behaves under direct and inverse images by Galois coverings of
reducible curves. For non-singular projective and irreducible
varieties, similar results were proved by Takemoto in \cite{Tak73}.

\begin{lem}\label{l:DirectInverse} Let $X$ be a projective connected and reduced curve whose irreducible components are smooth. Let $H$ be a polarization on $X$ and let $f\colon Y\to X$ be an \`{e}tale Galois covering of degree $n$ where  $Y$ is also connected.
\begin{enumerate} \item If $\mathcal{E}$ is a torsion-free sheaf on $X$ such that $f^\ast \cE$ is $\mu_S$-semistable with respect to $f^\ast H$, then $\mathcal{E}$ is $\mu_S$-semistable with respect to $H$.
\item A torsion-free sheaf $\cF$ on $Y$ is  $\mu_S$-semistable with respect to  $f^\ast H$ if and only if $f_\ast(\cF)$ is $\mu_S$-semistable with respect to  $H$.
\end{enumerate}
\end{lem}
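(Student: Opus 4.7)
My plan is to reduce both statements to Simpson-slope comparisons, using the identities $P_{f^\ast\cE}(s)=nP_\cE(s)$ and $P_{f_\ast\cF}(s)=P_\cF(s)$, and then to handle the only non-formal step via the Galois structure of $f$. The pushforward identity is immediate from the projection formula and $f^\ast\cO_X(sH)=\cO_Y(sf^\ast H)$. For pullback, the projection formula reduces the claim to showing $\chi(X,\cE(sH)\otimes f_\ast\cO_Y)=n\,P_\cE(s)$; since $f$ is étale, the trace pairing makes $f_\ast\cO_Y$ a self-dual locally free sheaf of rank $n$, so $\det f_\ast\cO_Y$ is $2$-torsion and in particular has degree zero on each smooth component $C_i$. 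Splitting $\cE$ via the sequence $0\to\cE\to\bigoplus_i\cE_{C_i}\to T\to 0$, tensoring with the locally free $f_\ast\cO_Y$, and applying Riemann--Roch componentwise on each $C_i$ then yields $P_{f^\ast\cE}(s)=nP_\cE(s)$. Consequently $\mu_S(f^\ast\cE)=\mu_S(\cE)$ and $\mu_S(f_\ast\cF)=\mu_S(\cF)$.

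Part (1) and one direction of (2) are then formal. For (1), a subsheaf $\cF\hookrightarrow\cE$ pulls back to $f^\ast\cF\hookrightarrow f^\ast\cE$ by flatness of $f$, so the semistability of $f^\ast\cE$ combined with slope preservation forces $\mu_S(\cF)\leq\mu_S(\cE)$; purity of $\cE$ descends from that of $f^\ast\cE$ by faithful flatness. Dually, if $f_\ast\cF$ is $\mu_S$-semistable and $\cG\hookrightarrow\cF$ is a subsheaf, the finite (hence exact) pushforward gives $f_\ast\cG\hookrightarrow f_\ast\cF$, and the same slope argument produces $\mu_S(\cG)\leq\mu_S(\cF)$.

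The main obstacle is the converse direction of (2): that semistability of $\cF$ implies semistability of $f_\ast\cF$. This is the only place where the \emph{Galois} hypothesis is actually needed. I would exploit the canonical base-change isomorphism
\[
f^\ast f_\ast\cF\;\simeq\;\bigoplus_{\sigma\in G}\sigma^\ast\cF,
\]
arising from the decomposition $Y\times_X Y\simeq\coprod_{\sigma\in G}\Gamma_\sigma$ of the fibre product into graphs of Galois elements (this uses connectedness of $Y$). Because each $\sigma\colon Y\to Y$ is an automorphism over $X$, it preserves $f^\ast H$, so every summand $\sigma^\ast\cF$ is $\mu_S$-semistable of the same Simpson slope as $\cF$; a direct sum of $\mu_S$-semistable sheaves of a common slope is $\mu_S$-semistable, whence $f^\ast f_\ast\cF$ is $\mu_S$-semistable of slope $\mu_S(f_\ast\cF)$. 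Any subsheaf $\cG\hookrightarrow f_\ast\cF$ then flat-pulls back to $f^\ast\cG\hookrightarrow f^\ast f_\ast\cF$, and slope preservation propagates the ensuing inequality to $\mu_S(\cG)\leq\mu_S(f_\ast\cF)$. Purity of $f_\ast\cF$ follows from purity of $\cF$ by adjunction: any torsion subsheaf $\cS\hookrightarrow f_\ast\cF$ adjoins to a map $f^\ast\cS\to\cF$ with zero-dimensional-support image, which must vanish by purity of $\cF$, forcing $\cS=0$.
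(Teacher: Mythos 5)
Your proof is correct and follows essentially the same route as the paper: establish $P_{f^\ast\cE}(s)=nP_{\cE}(s)$ via the normalization sequence $0\to\cE\to\oplus_i\cE_{C_i}\to T\to 0$ so that Simpson slopes are preserved, deduce (1) by pulling back destabilizing subsheaves, and reduce the forward direction of (2) to (1) through the Galois trivialization $f^\ast f_\ast\cF\simeq\oplus_{\sigma\in G}\sigma^\ast\cF$. The only (harmless) divergence is that you derive the Euler-characteristic identity from the projection formula and the self-duality of $f_\ast\cO_Y$, where the paper simply invokes smoothness of each component to get $P_{f^\ast\cE_{C_i}}=nP_{\cE_{C_i}}$ and then uses flatness and additivity.
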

\begin{proof} (1) By \eqref{exactseq}, if $C_1,\dots, C_N$ are the irreducible components of $X$, there is an exact sequence $$0\to \cE\to \cE_{C_1}\oplus \dots  \oplus\cE_{C_N}\to  T\to 0\, ,$$ where $\mathcal{E}_{C_i}$ is a vector bundle on $C_i$ for all $i$ and $T$ is a torsion sheaf.
Because $C_i$ is smooth, one has that
$P_{f^\ast\mathcal{E}_{C_i}}(s)=nP_{\mathcal{E}_{C_i}}(s)$.
Moreover, since $f$ is flat,  $$0\to f^\ast \cE\to f^\ast
\cE_{C_1}\oplus \dots \oplus f^\ast \cE_{C_r}\to f^\ast T\to 0$$ is also
an exact sequence. Then the additivity of the Hilbert polynomial
allow us to  conclude that
$P_{f^\ast\mathcal{E}}(s)=nP_{\mathcal{E}}(s)$ where both
polynomials are computed with the corresponding polarizations. Hence
\begin{equation}\label{e:muinversa}
\mu_S(f^\ast \cE)=\mu_S(\cE)
\end{equation} and the result follows.

(2) By (1), to prove the direct implication it suffices to prove
that $f^\ast f_\ast(\cF)$ is semistable with respect to $f^\ast H$. Since any
Galois \`{e}tale covering with $X$ and $Y$ connected is trivialized by
itself, $f^\ast f_\ast(\cF)=\oplus\rho^\ast \cF$ where $\rho$ runs
over the Galois group of $f$. Then the result follows because any
extension of two $\mu_S$-semistable sheaves with the same slope is
$\mu_S$-semistable as well. The converse is true for any finite
morphism.
\end{proof}

It is important to remark that Equation \eqref{e:muinversa} is
true because we are considering the Simpson's slope and not the usual slope, defined for a torsion-free sheaf as the
quotient obtained by dividing the degree by the rank. For smooth
varieties, it is a well-known fact (cf.~\cite{Tak73})
that if $f$ is an unramified covering then the  relation between the usual slopes of $\cE$ and of
$f^\ast \cE$ is given instead by $\mu(f^\ast \cE)=\deg f\cdot\mu(\cE)$.

\subsection{Indecomposable torsion free sheaves on $E_N$} The purpose of this subsection is just to
state some known results about the classification of indecomposable
vector bundles and torsion free sheaves on cycles $E_N$ of
projective lines that we shall use later. This classification was
obtained for the first time by Drozd and Greuel in \cite{DG01} for
arbitrary base fields. Nevertheless, for algebraically closed base
fields, one can find a geometric description of indecomposable
torsion free sheaves on $E_N$ in \cite{BBDG07} that follows the
classical description of vector bundles on elliptic curves given by
Oda in \cite{O71} (see Theorem \ref{t:indecom} below) and allows to
study which of these sheaves are semistable.

Following the same argument that Atiyah used for smooth elliptic
curves and taking into account that $\Ext^1(\cO_{E_N},\cO_{E_N})=k$,
it is possible to inductively prove  the following result.
\begin{lem} \label{l:Atiyah} Let $E_N$ be a cycle of $N$ projective lines. For any integer $m\geq 1$ there is a unique indecomposable vector bundle $\cF_m$
on $E_N$ appearing in the exact sequence $$0\to \cF_{m-1}\to
\cF_m\to \cO_{E_N}\to 0\, ,\quad \cF_1=\cO_{E_N}\,.
$$
\end{lem}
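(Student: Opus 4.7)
The strategy is to imitate Atiyah's classical inductive argument for smooth elliptic curves, exploiting the fact that $E_N$ is connected of arithmetic genus one with trivial dualizing sheaf. I will construct $\cF_m$ recursively as the unique non-trivial extension in $\Ext^1(\cO_{E_N},\cF_{m-1})$, proving simultaneously by induction on $m$ that $\cF_m$ is an indecomposable vector bundle with
\[
\Hom(\cO_{E_N},\cF_m)\simeq k,\qquad \Ext^1(\cO_{E_N},\cF_m)\simeq k.
\]
The base case $m=1$ is immediate: $\cF_1=\cO_{E_N}$ is indecomposable because $E_N$ is connected and reduced, and the two cohomological identities are precisely $h^0(E_N,\cO_{E_N})=1$ and $h^1(E_N,\cO_{E_N})=p_a(E_N)=1$.

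For the inductive step, suppose $\cF_{m-1}$ has been built and satisfies the inductive hypotheses. Since $\Ext^1(\cO_{E_N},\cF_{m-1})\simeq k$, any two non-split extensions differ by a non-zero scalar; rescaling on the quotient $\cO_{E_N}$ yields an isomorphism of extensions, so the middle term $\cF_m$ is unique up to isomorphism. Applying $\Hom(\cO_{E_N},-)$ to the defining sequence produces a long exact sequence in which the connecting map
\[
\Hom(\cO_{E_N},\cO_{E_N})\longrightarrow \Ext^1(\cO_{E_N},\cF_{m-1})
\]
is multiplication by the class of the extension, hence an isomorphism of one-dimensional spaces. It follows that $\Hom(\cO_{E_N},\cF_m)\simeq\Hom(\cO_{E_N},\cF_{m-1})\simeq k$ and $\Ext^1(\cO_{E_N},\cF_m)\simeq\Ext^1(\cO_{E_N},\cO_{E_N})\simeq k$, so the inductive hypothesis propagates.

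The delicate point is indecomposability. Here I use Serre duality on $E_N$ (which is valid because the dualizing sheaf is trivial) to convert the computed $\Ext^1(\cO_{E_N},\cF_m)\simeq k$ into
\[
\Hom(\cF_m,\cO_{E_N})\simeq \Ext^1(\cO_{E_N},\cF_m)^{\ast}\simeq k.
\]
Suppose now $\cF_m\simeq\cA\oplus\cB$ with $\cA,\cB\neq 0$. Then $\Hom(\cA,\cO_{E_N})\oplus\Hom(\cB,\cO_{E_N})\simeq k$, so one summand, say $\Hom(\cB,\cO_{E_N})$, vanishes. The surjection $\cF_m\twoheadrightarrow \cO_{E_N}$ then restricts to zero on $\cB$, so $\cB$ is contained in $\cF_{m-1}$ as a direct summand of $\cF_m$, and hence also as a direct summand of $\cF_{m-1}$, contradicting the inductive indecomposability. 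Finally $\cF_m$ is locally free because it is an extension of locally free sheaves.

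I expect the main obstacle to be the indecomposability step: the naive Ext-computation only controls the behaviour of maps out of $\cO_{E_N}$, and one really needs the Serre-dual input $\Hom(\cF_m,\cO_{E_N})\simeq k$ in order to force any would-be summand of $\cF_m$ to sit inside $\cF_{m-1}$. Every other ingredient is a formal consequence of $h^\ast(\cO_{E_N})=(1,1)$ and the one-dimensionality of $\Ext^1(\cO_{E_N},\cF_{m-1})$ propagated through the long exact sequence.
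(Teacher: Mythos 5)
Your proof is correct and is precisely the inductive Atiyah-style argument that the paper invokes (the paper states the lemma without a written proof, remarking only that it follows by Atiyah's induction from $\Ext^1(\cO_{E_N},\cO_{E_N})=k$); in particular the Serre-duality step producing $\Hom(\cF_m,\cO_{E_N})\simeq k$ is the right replacement for smoothness/irreducibility on the reducible Gorenstein curve $E_N$. One small point to patch in the indecomposability step: if the summand $\cB$ with $\Hom(\cB,\cO_{E_N})=0$ happens to be all of $\cF_{m-1}$, the contradiction is not with the indecomposability of $\cF_{m-1}$ (a sheaf is trivially a direct summand of itself) but with the non-splitness of the chosen extension, since the projection $\cF_m\to\cB=\cF_{m-1}$ then retracts the inclusion $\cF_{m-1}\hookrightarrow\cF_m$; this case should be noted separately.
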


\begin{thm}\label{t:indecom}\emph{\cite[Theorem 19]{BBDG07}} Let $E_N$ be a cycle of $N$ projective lines
and $I_k$ be a chain of $k$ projective lines (cf.~Figure
\ref{fig:ik}). Let $\cE$ be an indecomposable torsion free sheaf on
$E_N$. The following holds:
\begin{enumerate}
\item If $\cE$ is a
vector bundle, there is an \'{e}tale covering $\pi_r\colon E_{rN}\to
E_N$, a line bundle $\calL$ on $E_{rN}$  and a number $m\in \mathbb{N}$ such
that $$\cE\simeq \pi_{r\ast}(\calL\otimes\cF_m)\, .$$
The integers $r$, $m$ are determined by $\cE$. Moreover, when $r>1$ the multidegree
$\underline{d}(\calL)$ of the line bundle $\calL$ is non-periodic.
\item If $\cE$ is not locally free, then there exits a finite map $p_k\colon I_k\to E_N$ (defined as the composition of some  $\pi_r$ and some closed immersion $i\colon I_k\hookrightarrow E_{rN}$) and a line bundle $\calL$ on $I_k$ such that $\cE\simeq p_{k\ast}(\calL)$.
   \end{enumerate}

\end{thm}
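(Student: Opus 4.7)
The plan is to reduce the classification to a problem on the normalization $\nu\colon \widetilde{E_N} = \bigsqcup_{i=1}^N \widetilde{C}_i \to E_N$, where each $\widetilde{C}_i\simeq \mathbb{P}^1$. For a torsion-free sheaf $\cE$ on $E_N$, the pullback $\nu^*\cE$ modulo torsion is a vector bundle on $\widetilde{E_N}$, and $\cE$ is reconstructed from that vector bundle together with gluing data at the $2N$ preimages of the nodes. Applying Grothendieck's splitting theorem on each $\widetilde{C}_i\simeq \mathbb{P}^1$, this reconstruction translates the classification of indecomposable torsion-free sheaves on $E_N$ into the classification of indecomposable representations of the cyclic affine quiver $\widetilde{A}_{N-1}$, with decorations encoding the degrees on each component.

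First I would address part (2). If $\cE$ is torsion free but not locally free at a node $x$, a local analysis of maximal Cohen--Macaulay modules over $\cO_{E_N,x}^{\wedge}\simeq k[[u,v]]/(uv)$ shows that $\cE$ is the pushforward, via the partial normalization at $x$, of a torsion-free sheaf. Iterating at every singular point where $\cE$ fails to be locally free, and first passing to an étale cover $\pi_r$ when necessary in order to separate branches that are identified in $E_N$, one obtains a closed immersion $i\colon I_k\hookrightarrow E_{rN}$ and an indecomposable torsion-free sheaf $\cF$ on $I_k$ with $\cE\simeq \pi_{r\ast}(i_\ast\cF)$. Since $I_k$ is a tree of projective lines whose dual graph is a chain, its indecomposable torsion-free sheaves are line bundles (by Grothendieck's splitting on each component together with the rigidity of string representations of type $A_k$), so $\cF=\calL$ is a line bundle and $\cE\simeq p_{k\ast}\calL$, as asserted.

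For part (1), $\cE$ is a vector bundle, so all gluing matrices are invertible. Traversing the cycle once, their composition is an element of $GL(V)$ for some finite-dimensional vector space $V$; indecomposability of $\cE$ forces this element to be conjugate to a single Jordan block with eigenvalue $\lambda\in k^\ast$. Choose $r$ minimal so that after pulling back along $\pi_r$ and twisting by a suitable line bundle on $E_{rN}$ the monodromy becomes trivial; then $\pi_r^\ast\cE$ has an indecomposable summand $\cE_0$ whose associated representation has trivial monodromy and constant rank $m$ per component. Lemma \ref{l:Atiyah}, extended inductively to any vector bundle with these numerical invariants, identifies $\cE_0$ with $\calL\otimes\cF_m$ for a unique line bundle $\calL$; Galois descent then recovers $\cE\simeq\pi_{r\ast}(\calL\otimes\cF_m)$. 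The integer $r$ is detected by the order of the monodromy and $m$ by the generic rank of $\cE$ divided by $r$; the non-periodicity of $\underline{d}(\calL)$ when $r>1$ is forced by minimality of $r$, because otherwise $\calL\otimes\cF_m$ would descend through an intermediate cover and its pushforward to $E_N$ would split, contradicting the indecomposability of $\cE$.

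The main obstacle I expect is the rigorous translation between indecomposable sheaves and indecomposable quiver representations and, in part (1), the precise interaction of the étale covers $\pi_r$ with the monodromy eigenvalue, so that the triple $(r,\calL,m)$ is a genuine invariant of $\cE$. Establishing this translation, together with the combinatorial classification of the indecomposable representations of $\widetilde{A}_{N-1}$ into ``band'' modules corresponding to (1) and ``string'' modules corresponding to (2), is the technical heart of the argument in \cite{BBDG07,DG01}.
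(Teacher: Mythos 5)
The paper does not actually prove this statement: it is quoted as \cite[Theorem 19]{BBDG07}, with the classification originally due to Drozd and Greuel \cite{DG01}, so there is no internal proof to compare yours against. Your sketch follows the strategy of those references (reduction to a gluing/matrix problem over the normalization, with vector bundles corresponding to ``bands'' and non-locally-free sheaves to ``strings''), and part (2), together with the finite-type argument for the chain $I_k$, is sound in outline.

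In part (1), however, there is a concrete misstep in how you pin down $r$. You propose to ``choose $r$ minimal so that after pulling back along $\pi_r$ and twisting by a suitable line bundle on $E_{rN}$ the monodromy becomes trivial,'' and you assert that the summand $\cE_0$ has trivial monodromy. Since $k$ is algebraically closed, $\Pic^{\underline{0}}(E_N)\simeq k^\ast$ already acts transitively on the possible monodromy eigenvalues, so the eigenvalue can be normalized to $1$ by a twist with $r=1$; your recipe therefore does not determine $r$. Moreover $\calL\otimes\cF_m$ does \emph{not} have trivial monodromy: its cyclic gluing datum is a single $m\times m$ Jordan block whose eigenvalue is the continuous parameter of $\calL$ and whose size is $m$, and neither can be removed by twisting. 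What $r$ actually records is the minimal period of the splitting-type (multidegree) data as one traverses the cycle --- equivalently the length $rN$ of the band --- which is why the theorem requires $\underline{d}(\calL)$ to be non-periodic when $r>1$; your closing remark about non-periodicity identifies the right mechanism, but it should be what \emph{defines} $r$, not a consequence of a monodromy normalization. Also, ``Galois descent'' is not the right name for the final step: $\cE$ is recovered as a direct summand of $\pi_{r\ast}\pi_r^\ast\cE$ via adjunction, indecomposability and a rank count, not by descending $\calL\otimes\cF_m$ along $\pi_r$. These points are fixable, but as written the construction of the invariant triple $(r,\calL,m)$ --- which is the content of the theorem --- is not correctly specified.
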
 Remember that if $\underline{d}=(d_1,\dots, d_N,
d_{N+1},\dots, d_{2N},\dots,d_{(r-1)N},\dots,d_{rN})$, then the
non-periodicity means that $\underline{d}\neq \underline{d}[t]$ for
$t=1,\dots, r-1$ where $$\underline{d}[1]=(d_{N+1},\dots,
d_{2N},\dots, d_{(r-1)N},\dots,d_{rN},d_1,\dots, d_N)$$ and
$\underline{d}[t]=(\underline{d}[t-1])[1]$.

\begin{figure}[h] 
  \centering
  \includegraphics[width=1in]{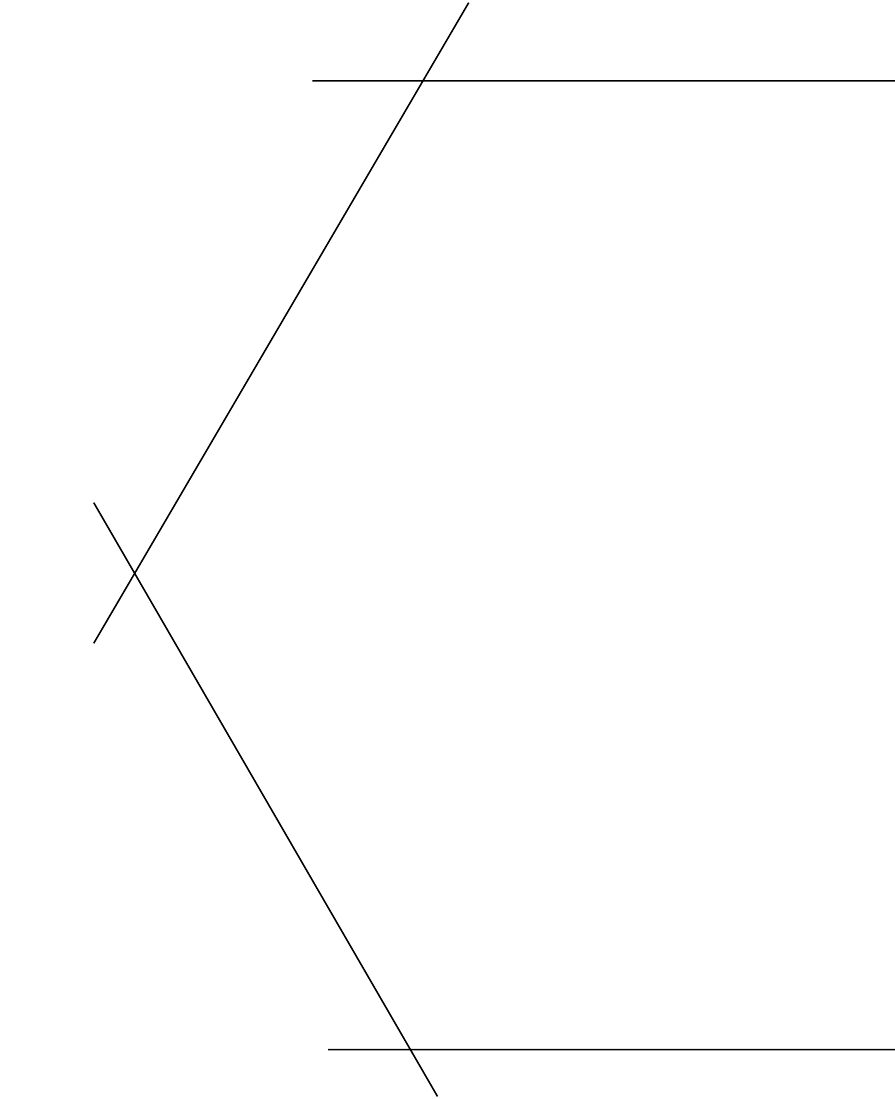}
  \caption{The curve $I_4$}
  \label{fig:ik}
\end{figure}

\subsection{Locally free sheaves on cycles}
\begin{prop}\label{prop:locally free}
Let $\cE$ be a pure dimension one sheaf on $E_N$. Then
$$\sum_{i=1}^N d_i(\cE)-\chi(\cE)\leq 0\,,
$$
and $\sum_{i=1}^N d_i(\cE)-\chi(\cE)= 0$ if and only if $\cE$ is locally free.
\end{prop}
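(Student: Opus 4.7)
The proof plan is to reduce the global statement to a local analysis at the nodes via the exact sequence \eqref{exactseq}. Since each component $C_i\simeq\mathbb{P}^1$ has genus zero, Riemann--Roch on $\mathbb{P}^1$ gives $\chi(\cE_{C_i})=d_i+r_i$, and additivity of the Euler characteristic on \eqref{exactseq} yields
$$
\sum_{i=1}^N d_i \;-\; \chi(\cE) \;=\; \length(T)-\sum_{i=1}^N r_i.
$$
Thus the statement is equivalent to the inequality $\length(T)\le \sum_{i=1}^N r_i$, with equality characterizing local freeness.

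To prove this, I would localize at each node $x_j$ of $E_N$. The completed local ring is $R=k[[u,v]]/(uv)$, whose only finitely generated indecomposable torsion-free modules are $R$ itself together with the two branch modules $k[[u]]=R/(v)$ and $k[[v]]=R/(u)$. Hence the completed stalk of the pure sheaf $\cE$ at the node $x_j$ (say connecting $C_j$, locally $v=0$, and $C_{j+1}$, locally $u=0$) decomposes as
$$
\widehat{\cE}_{x_j}\;\simeq\;R^{a_j}\oplus k[[u]]^{b_j}\oplus k[[v]]^{c_j}.
$$
A summand-by-summand computation of the natural map $\cE_{x_j}\to(\cE_{C_j})_{x_j}\oplus(\cE_{C_{j+1}})_{x_j}$ then shows that each copy of $R$ contributes a length-$1$ cokernel (the usual node-gluing obstruction $k$), while each copy of $k[[u]]$ or $k[[v]]$ maps isomorphically onto the corresponding branch restriction and contributes nothing. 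Reading off ranks on both branches therefore gives
$$
\length(T_{x_j})=a_j,\qquad r_j=a_j+b_j,\qquad r_{j+1}=a_j+c_j.
$$

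Finally, summing over the $N$ nodes, each component $C_i$ is adjacent to exactly two nodes in the cycle, so
$$
2\sum_{i=1}^N r_i \;=\; \sum_{j=1}^N(r_j+r_{j+1}) \;=\; 2\sum_{j=1}^N a_j+\sum_{j=1}^N(b_j+c_j).
$$
Hence $\length(T)=\sum_j a_j\le \sum_i r_i$, with equality if and only if $b_j=c_j=0$ for every $j$, that is, iff $\widehat{\cE}_{x_j}$ is a free $R$-module at every node, i.e.\ iff $\cE$ is locally free. The main obstacle is the local computation itself, which requires invoking the classification of torsion-free modules over the nodal singularity and verifying that the free $R$-summand is precisely the part responsible for the failure of surjectivity onto the two branch restrictions; once this local identification is established, the global inequality is just a combinatorial sum over the cycle.
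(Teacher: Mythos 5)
Your proof is correct, and it takes a genuinely different route from the paper. The paper reduces to indecomposable sheaves by additivity of $\sum_i d_i-\chi$ and then invokes the global classification of Theorem \ref{t:indecom}: a non-locally-free indecomposable is $p_{k\ast}(\calL)$ for a line bundle on a chain $I_k$, which gives $\sum_i d_i-\chi=-1$, while a locally free indecomposable is $\pi_{s\ast}(\calL\otimes\cF_m)$, which gives $0$. You instead work entirely locally: combining the sequence \eqref{exactseq} with Riemann--Roch on each $C_i\simeq\bbP^1$ converts the statement into $\length(T)\leq\sum_i r_i$, and you settle that at each node using the classification of torsion-free modules over $k[[u,v]]/(uv)$ into copies of $R$, $k[[u]]$, $k[[v]]$, together with the observation that only the free summands produce a gluing cokernel. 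Your local computations check out ($\length(T_{x_j})=a_j$, $r_j=a_j+b_j$, $r_{j+1}=a_j+c_j$), and since in a cycle each component meets exactly two nodes the count $2\sum_i r_i=2\sum_j a_j+\sum_j(b_j+c_j)$ gives both the inequality and the equality criterion. What your argument buys is independence from the Drozd--Greuel/BBDG classification theorem (only the elementary finite-CM-type statement for the node is needed) and an explicit formula for the defect, $\chi(\cE)-\sum_i d_i=\tfrac12\sum_j(b_j+c_j)$, which measures the non-free local summands; what the paper's argument buys is brevity, since Theorem \ref{t:indecom} is already in place for the rest of the section. One small point worth making explicit: both proofs tacitly assume $N\geq 2$, so that each $C_i$ is a smooth $\bbP^1$ and $d_i$, $\chi(\cE_{C_i})=d_i+r_i$ are unambiguous (for $N=1$ the bookkeeping with $\cE_{C_1}=\cE$ breaks down); this is the standing hypothesis in this part of the paper.
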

\begin{proof}
Since the function $\sum_{i=1}^N d_i(\cE)-\chi(\cE)$ is additive over direct sums of sheaves, we can assume that $\cE$ is indecomposable. We can then apply Theorem \ref{t:indecom}. If $\cE$ is not locally free, then $\cE \simeq p_{k\ast}(\calL)$, where $\calL$ is a line bundle on a chain $I_k$ of projective lines and $p_k\colon I_k \to E_N$ is a finite morphism. One has
$\chi(\cE)=\chi(\calL)= 1+ \sum_{D} d(\calL_{D})$,
where the sum runs over the irreducible components $D$ of $I_k$. Moreover $\sum_{D} d(\calL_{D})= \sum_{i=1}^N d_i(\cE)$ and then
$\sum_{i=1}^N d_i(\cE)-\chi(\cE)<0$.

If $\cE$ is locally free, then $\cE \simeq \pi_{s\ast}(\calL \otimes \cF_m)$ for a finite morphism $\pi_s\colon E_{sN}\to E_N$ and a line bundle $\calL$ on $E_{sN}$. Thus
$\chi(\cE)=\chi(\calL\otimes \cF_m)=m \chi(\calL)= m\sum_{j=1}^{sN} d(\calL_{D_j})$,
where the sum runs over the irreducible components $D_j$ of $E_{sN}$. Thus
$\sum_{i=1}^N d_i(\cE)-\chi(\cE)= 0$.
\end{proof}

\begin{rem}
Proposition \ref{prop:locally free} is not true for the chain $I_N$. For any pure
dimension one sheaf $\cE$ on $I_N$ one has $\sum_{i=1}^N
d_i-\chi(\cE) < 0$.
\end{rem}

\subsection{Stable sheaves of degree zero on $E_N$}
Let $X=E_N$ be as above, a cycle of projective lines. Suppose now
that the number of irreducible components of $X$ is $N\geq 2$. Fix a
polarization $H$ on $X$ of degree $h$ and let $\cE$ be a coherent
sheaf on $X$. Since $\chi(\cO_X)=0$, the second coefficient of the
Hilbert polynomial, that coincides with its Euler characteristic
$\chi(\cE)$, can be seen in view of Remark \ref{r:RR} as its degree.
Hence, from now on, if the Hilbert polynomial of $\cE$ is
$P_{\cE}(s)=rs$, then we will refer to it as a degree 0 sheaf.

When $r=h$, the structure of the component $\cM_X((1,\dots,
1),0)\subset \cM_X(h,0)$ was determined by one of the authors in
\cite{LM05} and \cite{LM05a}. There, one can find the  following
result.
\begin{lem} \label{l:rank1} Let $\cE$ be a pure dimension one sheaf on $X$ of degree
0 and multirank $\underline{r}(\cE)=(1,\dots, 1)$.
\begin{enumerate}
\item The (semi)stability of $\cE$ does not depend on the polarization.
\item $\cE$ is $\mu_S$-stable if and only if it is a line bundle and its multidegree is $\underline{d}(\cE)=(0,\dots,0)$.
\item If $\cE$ has multirank $\underline{r}(\cE)=(1,\dots, 1)$ and is strictly semistable, then its graded object is $Gr(\cE)\simeq \oplus_{i=1}^N \cO_{C_i}(-1)$, where $C_1,\dots,C_N$ are the irreducible components of $X$.
\end{enumerate}
\end{lem}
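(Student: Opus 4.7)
The plan is as follows. For (1), the Hilbert polynomial of $\cE$ is $P_{\cE}(s)=hs$, so $\mu_S(\cE)=0$. Any proper subsheaf $\cF\subsetneq \cE$ is pure of dimension one by purity of $\cE$, hence $r(\cF)>0$, so the (semi)stability condition $\mu_S(\cF)\leq \mu_S(\cE)$ (resp.\ strict inequality) reduces to $\chi(\cF)\leq 0$ (resp.\ $<0$), which does not involve the polarization $H$.

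For the direct implication in (2), a stable $\cE$ is simple and in particular indecomposable, so Theorem \ref{t:indecom} applies: either $\cE\simeq \pi_{r*}(\calL\otimes \cF_m)$ or $\cE\simeq p_{k*}(\calL)$. In the vector bundle case the multirank of $\cE$ is $(rm,\dots,rm)$, and equality with $(1,\dots,1)$ forces $r=m=1$, so $\cE\simeq \calL$ is a line bundle. In the non-locally free case, the endpoint component of the chain $I_k$ yields, after push-forward of a twist-by-the-endpoint-node exact sequence, a subsheaf of $\cE$ of non-negative Euler characteristic, contradicting stability. Hence $\cE$ is a line bundle on $E_N$. Every subsheaf of $\cE$ has the form $\cE\otimes I$ for a coherent ideal $I\subset \cO_X$, and flatness of $\cE$ gives $\chi(\cE\otimes I)=-\chi(\cE\otimes \cO_X/I)$. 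Taking $I=I_{C_i}$ yields $\chi(\cE\otimes I_{C_i})=-d_i-1$, so stability forces $d_i\geq 0$ for every $i$; combined with $\sum d_i=0$ from Proposition \ref{prop:locally free}, this gives $\underline{d}(\cE)=(0,\dots,0)$.

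For the converse in (2), let $\cE$ be a line bundle with $\underline{d}(\cE)=(0,\dots,0)$ and $I\subsetneq \cO_X$ a non-zero ideal. A Riemann--Roch computation component by component on $E_N=C_1\cup\cdots\cup C_N$ shows that $\chi(\cE\otimes M)=\chi(M)+\sum_i d_i(\cE)\,r_i(M)$ for any coherent $M$, which under our hypothesis reduces to $\chi(\cE\otimes \cO_X/I)=\chi(\cO_X/I)=\chi(\cO_Z)$ for $Z=V(I)$. Since $X$ is reduced and $I\neq 0$, $Z\subsetneq X$: if $Z$ is zero-dimensional, $\chi(\cO_Z)=\mathrm{length}(\cO_Z)\geq 1$, while if $Z$ is one-dimensional with reduced subscheme $C_J\subsetneq E_N$ (a disjoint union of $c\geq 1$ chains), then $\chi(\cO_Z)\geq \chi(\cO_{C_J})=c\geq 1$. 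In either case $\chi(\cE\otimes I)\leq -1<0$, so $\cE$ is stable. This is the step needing the most care, since one must rule out $V(I)=X$ (which uses the reducedness of $E_N$) and correctly account for embedded points and nilpotent thickenings at the nodes.

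For (3), take a Jordan--H\"older filtration $\cE=\cE_0\supset \cE_1\supset\cdots\supset \cE_m=0$ with stable slope-zero factors $F_j=\cE_{j-1}/\cE_j$. Each $F_j$ has multirank in $\{0,1\}^N$ summing to $(1,\dots,1)$ and $\chi(F_j)=0$. The multirank $(0,\dots,0)$ is excluded since a non-zero stable torsion sheaf is $\cO_x$ with $\chi=1\neq 0$; the multirank $(1,\dots,1)$ is also excluded, because the remaining filtration pieces would then be torsion of Euler characteristic zero, hence vanish, forcing $\cE=F_j$ to be stable, against the assumption of strict semistability. Consequently each $F_j$ is supported on a proper connected subcurve, i.e.\ a chain $I_k\subsetneq E_N$. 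The argument of (2) adapted to the chain (using endpoint subsheaves) rules out stable torsion-free sheaves of multirank $(1,\dots,1)$ and $\chi=0$ on $I_k$ when $k\geq 2$, so $k=1$ and $F_j\simeq \cO_{C_{i_j}}(-1)$. Matching $\sum_j \underline{r}(F_j)=(1,\dots,1)$ forces every index $i$ to appear exactly once, yielding $Gr(\cE)\simeq \bigoplus_{i=1}^N \cO_{C_i}(-1)$.
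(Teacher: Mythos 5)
Your proof is correct in its essentials, but note that the paper offers no proof of this lemma: it is quoted from \cite{LM05} and \cite{LM05a}, where the component $\cM_X((1,\dots,1),0)$ is analysed directly. So your argument is necessarily a different, self-contained route, assembled from ingredients the paper does prove: part (1) from the observation that for pure subsheaves of a degree-zero sheaf the slope inequality reduces to $\chi(\cF)\le 0$; part (2) from Theorem \ref{t:indecom} together with the identity $\chi(\cE\otimes M)=\chi(M)+\sum_i d_i(\cE)\,r_i(M)$, which indeed follows from additivity and Proposition \ref{p:grupoK}; and part (3) from additivity of multirank along a Jordan--H\"older filtration. What this buys is independence from the external references and one uniform mechanism, the estimate $\chi(\cO_Z)\ge 1$ for proper non-empty closed subschemes $Z\subsetneq E_N$, whose delicate points (ruling out $V(I)=X$ by reducedness of $E_N$, and $\chi(\cO_Z)\ge\chi(\cO_{Z_{\mathrm{red}}})$ because the nilradical of $\cO_Z$ is supported in dimension zero) you correctly identify and handle. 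The one step stated too loosely is the non-locally-free case of (2), and its reuse in (3): a single endpoint component of the chain $I_k$ need \emph{not} give a subsheaf of non-negative Euler characteristic (take $\underline{d}(\calL)=(-1,0)$ on $I_2$). You must use the subsheaves attached to all initial sub-chains from one end together with the single-component subsheaf at the opposite end: if all of them had negative Euler characteristic, then, the partial degrees being integers summing to $-1$, the degree on the last component would be $\ge 0$, contradicting the condition at that end. With that repair the argument is complete.
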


\begin{lem} \label{l:tensor} Let $\calL$ be a line bundle on $X$. For any integer $m\geq 1$, the vector bundle $\cF_m\otimes \calL$ is $\mu_S$-semistable if and only if $\calL$ is $\mu_S$-semistable. For $m>1$, $\cF_m\otimes \calL$ is never a $\mu_S$-stable sheaf.
\end{lem}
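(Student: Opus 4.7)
My plan is to exploit the defining exact sequence
\[
0\to \cF_{m-1}\to \cF_m\to \cO_{E_N}\to 0
\]
from Lemma \ref{l:Atiyah} together with the local freeness of $\cF_m$, which reduces every assertion to an elementary property of $\calL$. The only preliminary needed is the slope identity $\mu_S(\cF_m\otimes \cN)=\mu_S(\cN)$, valid for any pure dimension one sheaf $\cN$ on $E_N$. To prove it, I would tensor the defining sequence with $\cN$; flatness of $\cF_m$ keeps the sequence exact, and additivity of the Hilbert polynomial together with induction on $m$ gives $P_{\cF_m\otimes \cN}(s)=m\,P_{\cN}(s)$, so the two sheaves share the same Simpson slope.

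For the implication \emph{$\calL$ semistable $\Rightarrow$ $\cF_m\otimes\calL$ semistable} I would argue by induction on $m$, the case $m=1$ being trivial. The inductive hypothesis gives $\mu_S$-semistability of $\cF_{m-1}\otimes\calL$, which by the slope identity has the same slope as $\calL$. Since any extension of two $\mu_S$-semistable sheaves of equal slope is $\mu_S$-semistable, the sequence $0\to\cF_{m-1}\otimes\calL\to\cF_m\otimes\calL\to\calL\to 0$ exhibits $\cF_m\otimes\calL$ as such an extension. Conversely, if $\calL$ fails to be $\mu_S$-semistable, any destabilizing subsheaf $\cM\hookrightarrow\calL$ remains a proper pure dimensional subsheaf after tensoring by $\cF_m$ (locally $\cF_m\otimes\cM\simeq\cM^{\oplus m}$ is pure, and flatness preserves strict inclusion), and the slope identity gives
\[
\mu_S(\cF_m\otimes\cM)=\mu_S(\cM)>\mu_S(\calL)=\mu_S(\cF_m\otimes\calL),
\]
so $\cF_m\otimes\calL$ is destabilized in turn.

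For the last assertion, when $m>1$ the defining sequence itself exhibits $\cF_{m-1}\otimes\calL$ as a nonzero proper subsheaf of $\cF_m\otimes\calL$ with the same Simpson slope, which precludes $\mu_S$-stability. The argument is essentially formal: no serious obstacle is anticipated, the only delicate point being the need to keep track that tensoring by $\cF_m$ preserves both purity and strict inclusion of subsheaves, which follows immediately from local freeness.
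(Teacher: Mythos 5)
Your proof is correct and shares most of its skeleton with the paper's: the identity $P_{\cF_m\otimes \calL}(s)=mP_{\calL}(s)$ extracted from the defining extensions of Lemma \ref{l:Atiyah}, the induction on $m$ using closure of semistable sheaves of fixed slope under extensions for the implication ``$\calL$ semistable $\Rightarrow$ $\cF_m\otimes\calL$ semistable'', and the subsheaf $\cF_{m-1}\otimes\calL$ of equal slope to rule out stability for $m>1$. Where you genuinely diverge is the remaining implication. The paper argues with destabilizing \emph{quotients}: it invokes Lemma~3.3 of \cite{LM05}, a structural result saying that a non-semistable line bundle on a reducible curve is destabilized by its restriction $\calL_Z$ to a proper subcurve $Z$, and then $\cF_m\otimes\calL_Z$ destabilizes $\cF_m\otimes\calL$. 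You instead take an arbitrary destabilizing subsheaf $\cM\hookrightarrow\calL$, use exactness of $-\otimes\cF_m$ (local freeness) to keep it a proper subsheaf, and apply the slope identity to $\cM$ itself --- which requires the small extra observation, which you supply, that $P_{\cF_m\otimes\cN}(s)=mP_{\cN}(s)$ holds for \emph{any} coherent $\cN$ because $\operatorname{Tor}_1(\cO_{E_N},\cN)=0$ keeps the tensored sequences exact. Your route is more elementary, avoids the external citation, and actually proves the lemma with $\calL$ replaced by any pure dimension one sheaf; the paper's route gets the same conclusion faster by leaning on the known classification of destabilizing quotients of line bundles on reducible curves. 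Both are sound.
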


\begin{proof} Using the exact sequences that define $\cF_m$ in Lemma \ref{l:Atiyah} one gets that $P_{\cF_m\otimes \calL}(s)=mP_{\calL}(s)$ so that $\mu_S(\cF_m\otimes \calL)=\mu_S(\calL)$ for any $m\geq 1$. Thus, the statement about the non-stability $\cF_m\otimes \calL$ for $m>1$ is straightforward.

Assume that $\cF_m\otimes \calL$ is $\mu_S$-semistable. If
$\calL$ is not $\mu_S$-semistable, by Lemma 3.3 in
\cite{LM05}, there exists a proper subcurve $Z\subset X$ such that
$\mu_S(\calL)>\mu_S(\calL_Z)$ where $\calL_Z$ is
the restriction of $\calL$ to $Z$. Since $P_{\cF_m\otimes
\calL_Z}(s)=mP_{\calL_Z}(s)$, the quotient $\cF_m\otimes
\calL_Z$ contradicts the semistability of $\cF_m\otimes
\calL$.  The converse is proved by induction on $m$ taking
into account that the category of semistable sheaves of fixed slope
is closed under extensions.
\end{proof}

In analogy to what happens for smooth elliptic curves and
irreducible  projective curves of arithmetic genus one, the
following theorem proves that there are no stable degree 0 sheaves
of higher rank.
\begin{thm} \label{t:stables}Let $X$ be a polarized curve of type $E_N$ with $N\geq 2$. Let $\cE$ be a sheaf of pure dimension 1 on $X$ with Hilbert polynomial
$P_{\cE}(s)=rs$ and multirank $\underline{r}(\cE)=(r_1,\dots,
r_N)$. If $\cE$ is $\mu_S$-stable, then either it is isomorphic to
$\cO_{C_i}(-1)$ for some $i=1,\dots, N$  or it is a locally free
sheaf with multirank $\underline{r}(\cE)=(1,\dots,1)$ and
multidegree $\underline{d}=(0,\dots,0)$.
\end{thm}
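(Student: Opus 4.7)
The plan is to combine the classification of indecomposable torsion-free sheaves on $E_N$ (Theorem \ref{t:indecom}) with the (semi)stability criteria of Lemmas \ref{l:DirectInverse}, \ref{l:rank1}, and \ref{l:tensor}. Since $\cE$ is $\mu_S$-stable, it is simple and hence indecomposable, so Theorem \ref{t:indecom} puts it in one of two forms: either $\cE \simeq p_{k*}\calL$ for a line bundle $\calL$ on a chain $I_k = D_1 \cup \dots \cup D_k$, or $\cE \simeq \pi_{r*}(\calL \otimes \cF_m)$ for a line bundle $\calL$ on $E_{rN}$.

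In the non-locally-free case, set $b_l = \deg(\calL|_{D_l})$. Since $\pi_r$ is \'etale it restricts to an isomorphism on each irreducible component of $E_{rN}$, so $p_k$ restricts to an isomorphism $D_l \iso C_{i(l)}$ onto some irreducible component $C_{i(l)} \subset E_N$. Because $p_k$ is finite, $p_{k*}$ is exact, and the canonical surjection $\calL \twoheadrightarrow \calL|_{D_l}$ yields a surjection $\cE \twoheadrightarrow \cO_{C_{i(l)}}(b_l)$ on $E_N$. The hypothesis $\chi(\cE) = 0$ forces $\chi(\calL) = 1 + \sum_l b_l = 0$, so $\sum_l b_l = -1$. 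If $k \ge 2$, some $b_l \le -1$; the corresponding quotient has slope $(b_l+1)/h_{i(l)} \le 0 = \mu_S(\cE)$, and it is proper since the support of $\cE$ meets at least two distinct components of $E_N$ while the quotient is supported on a single one. This contradicts stability, so $k = 1$ and $\cE \simeq \cO_{C_i}(-1)$.

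In the locally-free case, assume first $r > 1$. A computation of Euler characteristics gives $\chi(\calL) = \deg(\calL) = 0$, so $\calL$ has multirank $(1,\dots,1)$ and degree zero on $E_{rN}$. Lemmas \ref{l:DirectInverse}(2) and \ref{l:tensor} imply that the semistability of $\cE$ forces the semistability of $\calL$; by Lemma \ref{l:rank1}(2), $\calL$ could only be stable if its multidegree were $(0,\dots,0)$, which is periodic and therefore contradicts the non-periodicity assertion of Theorem \ref{t:indecom}(1). Hence $\calL$, and consequently $\calL \otimes \cF_m$ (for $m \ge 2$ by Lemma \ref{l:tensor}, since $\calL \otimes \cF_m$ is never stable then), is strictly semistable. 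A proper subsheaf of $\calL \otimes \cF_m$ of equal slope pushes forward via $\pi_r$ (finite flat, hence faithfully exact and slope-preserving in the sense of the proof of Lemma \ref{l:DirectInverse}) to a proper subsheaf of $\cE$ of equal slope, contradicting the stability of $\cE$. Thus $r = 1$; Lemma \ref{l:tensor} then forces $m = 1$, so $\cE$ is a line bundle on $E_N$, and Lemma \ref{l:rank1}(2) identifies its multirank and multidegree as $(1,\dots,1)$ and $(0,\dots,0)$.

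The crux is the locally-free case with $r > 1$, where the Atiyah-type non-periodicity of $\calL$ must be converted into a concrete destabilizing subsheaf by descending along $\pi_r$; the key inputs are the slope-preservation and faithful exactness of finite flat pushforward.
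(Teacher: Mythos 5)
Your proof is correct. The locally free half is essentially the paper's own argument: both invoke Theorem \ref{t:indecom}(1), use Lemma \ref{l:DirectInverse}(2) and Lemma \ref{l:tensor} to transfer semistability to $\calL$, exhibit an equal-slope proper subsheaf via the exactness and Hilbert-polynomial preservation of finite pushforward, and derive the contradiction between Lemma \ref{l:rank1}(2) and the non-periodicity of $\underline{d}(\calL)$ (the paper first forces $m=1$ using the subsheaf $\pi_{s\ast}(\calL\otimes\cF_{m-1})$ and then shows $\calL$ would have to be stable; you instead show $\calL\otimes\cF_m$ is strictly semistable and push the destabilizing subsheaf down --- same mechanism, reordered). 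Where you genuinely diverge is in ruling out non-locally-free stable sheaves other than $\cO_{C_i}(-1)$: the paper does not appeal to the classification at this stage, but restricts $\cE$ to each component, splits $\cE_{C_i}\simeq\oplus_j\cO_{\mathbb{P}^1}(\alpha_{i,j})$ by Grothendieck's theorem, uses stability applied to these line-bundle quotients to force $d_i\ge 0$, and then concludes local freeness from the numerical criterion $\sum_i d_i(\cE)-\chi(\cE)\le 0$ with equality iff locally free (Proposition \ref{prop:locally free}). You instead apply Theorem \ref{t:indecom}(2) directly to write $\cE\simeq p_{k\ast}\calL$ and, for $k\ge 2$, produce an explicit quotient $\cO_{C_{i(l)}}(b_l)$ of non-positive slope. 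Both routes are valid and ultimately rest on the same classification (Proposition \ref{prop:locally free} is itself proved from Theorem \ref{t:indecom}); yours is more hands-on, the paper's slightly more economical since the numerical criterion does the case analysis for it. Two cosmetic points: your $r$ for the covering degree collides with the $r$ in $P_{\cE}(s)=rs$ from the statement (the paper writes $\pi_s$ and $\bar r$ precisely to avoid this), and the properness of the quotient $\cO_{C_{i(l)}}(b_l)$ is most robustly justified by the strict drop in the leading coefficient of the Hilbert polynomial rather than by counting components of the support.
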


\begin{proof}
Assume that $\cE$ is not isomorphic to any of the sheaves $\cO_{C_i}(-1)$,
$i=1,\dots, N$. By \eqref{exactseq}, we have an exact sequence
$$0\to \cE \to \cE_{C_1}\oplus \dots
\oplus \cE_{C_N}\to T \to 0 $$  where $C_1,\dots, C_N$ denote the
irreducible components of $X$. Let us consider a component $C_i$ such that $\cE_{C_i}\neq 0$. Then $\cE_{C_i}$ is a
vector bundle on $C_i\simeq \mathbb{P}^1$ of rank $r_i$ and degree
$d_i$, and
the Grothendieck description of vector bundles on the projective
line gives an isomorphism $$\cE_{C_i}\simeq
\oplus_{j=1}^{r_i}\cO_{\mathbb{P}^1}(\alpha_{i,j})$$ where the
integers $\alpha_{i,j}$ satisfy $\sum_{j=1}^{r_i}\alpha_{i,j}=d_i$. Moreover, since we are assuming that $\cE$ is not isomorphic to $\cO_{C_i}(-1)$, the sheaf
$\cO_{\mathbb{P}^1}(\alpha_{i,j})$ is a strict quotient of $\cE$  for every $j=1, \dots, r_i$, and the
stability of $\cE$ imposes that $\alpha_{i,j}\geq 0$ so that $d_i\geq 0$. By Proposition \ref{prop:locally free},
$\cE$ is a locally free sheaf.

It remains to show that the rank $\bar r=r_1=\dots=r_N$ of the vector bundle $\cE$ is one. Assume that $\bar r>1$. Since any stable
vector bundle is indecomposable, we can apply Theorem \ref{t:indecom}. Notice that we can exclude the case $\cE=\calL\otimes \cF_{\bar r}$ because the sheaves $\calL\otimes \cF_{\bar r}$ are strictly semistable by Lemma \ref{l:tensor}. Then, there
is an \'{e}tale covering $\pi_s\colon E_{sN}\to E_N$, a line bundle
$\calL$ on $E_{sN}$ whose multidegree $\underline{d}(\calL)$ is
non-periodic,  and a number $m\in \mathbb{N}$, such that $$\cE\simeq
\pi_{s\ast}(\calL\otimes\cF_m)\, .$$
 If $m>1$, the sheaf
$\pi_{s\ast}(\calL\otimes\cF_m)$ is not stable because its subsheaf
$\pi_{s\ast}(\calL\otimes \cF_{m-1})$ has the same slope. Hence, $m=1$
and  $\cE\simeq \pi_{s\ast}(\calL)$. Since $\pi_s$ is a finite
morphism, $\chi(\calL)=\chi(\cE)=0$, and the stability of $\cE$
implies that $\calL$ has to be a stable line bundle. By Lemma
\ref{l:rank1}, the multidegree of $\calL$ is
$\underline{d}=(0,\dots, 0)$ which contradicts the non-periodicity.
Then, $\cE$ is a line bundle and we conclude the proof.
\end{proof}

This is related to the following result due to  L. Bodnarchuck, presented by her in VBAC-2007: If  $\cE$ is a simple vector bundle of rank $r$, multidegree $\underline{d}=(d_1,\dots, d_N)$ and degree $d$ on $E_N$, one has $(r, d)=(r, d_1, \dots, d_N)=1$. Moreover, if these conditions are satisfied, the determinant gives an equivalence between the category of simple vector bundles of rank $r$ and multidegree $(d_1,\dots, d_N)$ on $E_N$ and $\Pic^{\underline{d}}(E_N)$. See \cite[Thm.~1.2.2, Rem.~1.2.3]{Bodn08} where the result is proved for Kodaira curves of type II, III and IV and stated for the cycles $E_N$, $N\leq 3$.

Using the results on the moduli space $\cM_X^s((1,\dots,1),0)$
given in \cite[Theorem 4.1]{LM05a} for $r=h$, we can summarize the
structure of the open set of stable degree 0 sheaves as follows.

\begin{cor} Let $X$ be a curve of type $E_N$ with $N\geq 2$ and $H$ a polarization on it of degree $h$. Let $\cM^s_X(r,0)$  be the open subset of stable sheaves with Hilbert polynomial $P(s)=rs$. The following holds:
\begin{enumerate} \item If $r=h$, all the components of $\cM^s_X(r,0)$ given by $\eqref{e:des}$ are empty except $\cM_X^s((1,\dots,1),0)$ which is isomorphic to the multiplicative group $\boldsymbol\mu =k^\ast$. Moreover, the compactification of the component $\cM_X^s((1,\dots,1),0)$ is isomorphic to a rational curve with one node.
    \item If $r=h_i$ for some $i=1,\dots, N$, then  $\cM^s_X(r,0)$ is a single point.
    \item Otherwise, $\cM^s_X(r,0)$ is empty.  \end{enumerate}
\end{cor}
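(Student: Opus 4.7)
The plan is to combine Theorem \ref{t:stables} with the decomposition \eqref{e:des} into a direct case analysis on the Hilbert polynomial rank $r$. By Theorem \ref{t:stables} every $\mu_S$-stable sheaf of degree $0$ belongs to one of two families: (a) the sheaves $\cO_{C_i}(-1)$, each with Hilbert polynomial $h_i s$ and multirank the $i$-th standard basis vector $e_i$; or (b) the line bundles with multirank $(1,\dots,1)$ and multidegree $(0,\dots,0)$, which have Hilbert polynomial $h s$ since $h=h_1+\dots+h_N$. The whole corollary is then obtained by matching the prescribed $r$ against the integers $h_i$ and $h$.

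First I would dispose of case (3): if $r$ coincides with neither $h$ nor any $h_i$, no stable sheaf has Hilbert polynomial $rs$, so every summand of \eqref{e:des} is empty and $\cM_X^s(r,0)=\varnothing$. In case (2), the hypothesis $N\geq 2$ together with the ampleness of $H$ (so $h_j>0$ for every $j$) gives the strict inequality $h_i<h$, which rules out family (b) and leaves only the $\cO_{C_j}(-1)$ with $h_j=r$ as candidates; each of them is rigid and lies in a distinct multirank-component $\cM_X^s(e_j,0)$ of \eqref{e:des}, so each such component reduces to a single reduced point. Finally, in case (1), the same inequality $h_i<h$ rules out family (a), so the only contributing stable sheaves are the multirank $(1,\dots,1)$, multidegree $(0,\dots,0)$ line bundles, and all components of $\cM_X^s(h,0)$ other than $\cM_X^s((1,\dots,1),0)$ vanish. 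The identification of this remaining component with $\boldsymbol\mu=k^\ast$ and the description of its natural compactification as a rational curve with one node is then cited directly from \cite[Theorem~4.1]{LM05a}.

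The substantive inputs are Theorem \ref{t:stables}, which pins down which sheaves are stable, and the external Theorem~4.1 of \cite{LM05a}, which supplies the moduli-theoretic identification in case (1); the rest of the argument is bookkeeping with multiranks against \eqref{e:des}. The only minor subtlety, and arguably the closest thing to an obstacle, is that in case (2) the phrase ``a single point'' has to be read componentwise along \eqref{e:des}, because whenever $h_j=h_i$ for some $j\neq i$ the sheaves $\cO_{C_i}(-1)$ and $\cO_{C_j}(-1)$ share the same Hilbert polynomial while sitting in distinct connected components of $\cM_X(r,0)$; each component nevertheless reduces to a point by rigidity, which is what the statement is asserting.
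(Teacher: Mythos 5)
Your proposal is correct and matches the paper's (implicit) argument: the corollary is stated as a direct consequence of Theorem \ref{t:stables} combined with the description of $\cM_X^s((1,\dots,1),0)$ from \cite[Theorem~4.1]{LM05a}, which is exactly the case analysis you carry out. Your observation that in case (2) the assertion ``a single point'' must be read per multirank component of \eqref{e:des} (since several components $C_j$ may share the same $h_j$) is a fair and correct reading of a slight imprecision in the statement.
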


\begin{cor} \label{c:JHfactors}Let $\cE$ be a semistable sheaf on $X$ with Hilbert polynomial $P(\cE)=rs$. If $\cF$ is a Jordan-H\"{o}lder factor of $\cE$, then $\cF$ is isomorphic either to one of the sheaves $\cO_{C_i}(-1)$, where $C_i$ are the  irreducible components of $X$, or to a line bundle $\calL$ on $X$ of multidegree $\underline{d}(\calL)=(0,\dots,0)$.
\end{cor}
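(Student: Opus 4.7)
The plan is to observe that the statement is an essentially immediate consequence of Theorem \ref{t:stables} once the definition of Jordan--H\"older factors in the Simpson setting is unwound. Recall that a Jordan--H\"older filtration of a $\mu_S$-semistable sheaf $\cE$ is a filtration $0=\cE_0\subset \cE_1\subset\dots\subset\cE_\ell=\cE$ by pure subsheaves whose successive quotients $\cF_j=\cE_j/\cE_{j-1}$ are $\mu_S$-stable and all have the same Simpson slope as $\cE$. So the JH factors are, in particular, $\mu_S$-stable sheaves of pure dimension one with $\mu_S(\cF_j)=\mu_S(\cE)$.

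First I would compute the slope of $\cE$. Since $P_\cE(s)=rs$ we have $d(\cE)=0$ and $r(\cE)=r>0$, so $\mu_S(\cE)=0$. Next I would translate this to each JH factor $\cF$: as $\cF$ is pure of dimension one (hence $r(\cF)>0$) and $\mu_S(\cF)=d(\cF)/r(\cF)=0$, we deduce $d(\cF)=0$. Therefore the Hilbert polynomial of $\cF$ has the form $P_\cF(s)=r(\cF)\,s$, i.e., $\cF$ is a degree zero pure sheaf on $X$.

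At this point I would apply Theorem \ref{t:stables} directly to $\cF$. That theorem asserts that any $\mu_S$-stable pure dimension one sheaf on the cycle $E_N$ with Hilbert polynomial of the form $r'(s)s$ (degree zero) is either isomorphic to $\cO_{C_i}(-1)$ for some irreducible component $C_i$, or is a line bundle of multirank $(1,\dots,1)$ and multidegree $(0,\dots,0)$. One only needs to check that the hypothesis ``$P_\cF(s)=r'(s)s$'' is exactly what has been established in the previous step, and that $\cF$ is $\mu_S$-stable by definition of JH factor.

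There is essentially no obstacle: the corollary is a direct reading of Theorem \ref{t:stables} applied factor by factor. The only subtlety worth mentioning explicitly in the write-up is the passage from the Hilbert polynomial of $\cE$ to those of its JH factors, i.e., verifying that each $\cF_j$ is itself a degree zero sheaf so that Theorem \ref{t:stables} genuinely applies; this follows because the Simpson slope is constant along a JH filtration and the JH factors are torsion-free (of finite slope).
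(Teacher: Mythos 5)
Your proposal is correct and is precisely the argument the paper intends: the corollary is stated without proof immediately after Theorem \ref{t:stables}, the point being exactly that each Jordan--H\"older factor is a stable pure dimension one sheaf of the same (zero) Simpson slope, hence of Hilbert polynomial $r's$, to which the theorem applies directly. Your explicit verification that the factors have degree zero and positive $r$ is the only content needed, and it is carried out correctly.
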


\begin{cor} Let $\underline{r}=(r_1,\dots,r_N)\in \mathbb{Z}^N$ such that  $r=r_1h_1+\dots+r_Nh_N$. The dimension of the connected component $\cM((r_1,\dots,r_N),0)$ is equal to the minimum of the $r_i$ with $i=1,\dots, N$.
\end{cor}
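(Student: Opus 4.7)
The plan is to reduce the dimension computation to Theorem~\ref{t:sym} via the Jordan--H\"{o}lder description of polystable representatives. First I would invoke Corollary~\ref{c:JHfactors}: every S-equivalence class in $\cM_X((r_1,\dots,r_N),0)$ is represented by a polystable sheaf
$$
\bigoplus_{i=1}^N \cO_{C_i}(-1)^{a_i} \;\oplus\; \bigoplus_{j=1}^b \calL_j,
$$
where the $\calL_j$ are line bundles of multidegree $(0,\dots,0)$. A short multirank computation (noting that $\cO_{C_i}(-1)$ contributes the standard basis vector $e_i$ while each $\calL_j$ contributes $(1,\dots,1)$) shows that $(a_1+b,\dots,a_N+b) = (r_1,\dots,r_N)$, forcing $a_i = r_i - b$, and the constraint $a_i \ge 0$ then forces $0 \le b \le m := \min_i r_i$.

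Next I would exhibit a bijection of S-equivalence classes
$$
\cM_X((m,\dots,m),0) \longleftrightarrow \cM_X((r_1,\dots,r_N),0), \qquad [\cF] \longmapsto \Bigl[\cF \oplus \bigoplus_i \cO_{C_i}(-1)^{r_i - m}\Bigr].
$$
Theorem~\ref{t:sym} identifies the source with $Sym^m E_1$; the crucial observation is that the nodal point of $E_1$ represents the polystable sheaf $\bigoplus_i \cO_{C_i}(-1)$, so a tuple in $Sym^m E_1$ having $m - b$ nodal entries and $b$ entries in the smooth part $\Pic^{(0,\dots,0)}(X)$ reproduces exactly the polystable shapes from Step~1. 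Since $Sym^m E_1$ is $m$-dimensional, this identifies $\cM_X((r_1,\dots,r_N),0)$ (at least set-theoretically) with an $m$-dimensional variety.

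The main obstacle will be lifting the set-theoretic bijection to a morphism of coarse moduli schemes of the appropriate dimension: this requires constructing a flat family over $\cM_X((m,\dots,m),0)$ realizing the direct-sum operation and invoking the universal property. If this route proves delicate, I would fall back to a direct dimension count from Step~1: the stratum with fixed $b$ is parametrized by $Sym^b \Pic^{(0,\dots,0)}(X)$, of dimension $b$ since the identity component of the Picard scheme of $X$ is $1$-dimensional (as $h^1(\cO_X)=1$). The upper bound $\dim \le m$ follows because no stratum exceeds $b = m$, while the lower bound $\dim \ge m$ comes from an explicit $m$-parameter family of polystable sheaves in the top stratum.
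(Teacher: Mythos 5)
Your ``fallback'' argument is in fact the paper's own proof: the authors list the polystable representatives $(\oplus_{i=1}^u\calL_i)\oplus(\oplus_{j=1}^N\cO_{C_j}(-1)^{\oplus v_j})$ with $u+v_j=r_j$ (hence $0\le u\le\min_i r_i$) exactly as in your Step~1, invoke Corollary~\ref{c:JHfactors} to see that every point of the component arises this way, and then conclude from the exact sequence $1\to k^\ast\to\Pic^0_s(X)\to\prod_i\Pic^0(C_i)\to 1$ (so $\Pic^0_s(X)\simeq k^\ast$ is $1$-dimensional, which is your $h^1(\cO_X)=1$ observation) that the stratum with $u$ line-bundle factors has dimension $u$ and the whole component has dimension $\min_i r_i$. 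So on that route you and the paper agree, including the slight terseness about why the top stratum genuinely realizes the dimension (one needs the parametrization by $Sym^u\Pic^0_s(X)$ to be a constructible map into the coarse moduli space, which is implicit in both treatments). Your primary route --- transporting everything to $\cM((m,\dots,m),0)\simeq Sym^m E_1$ by adding $\oplus_i\cO_{C_i}(-1)^{r_i-m}$ --- is genuinely different from what the paper does, and the obstacle you flag is real and not cosmetic: a bijection on closed points between varieties over an algebraically closed field does not control dimension (Theorem~\ref{t:sym} itself is only proved for balanced multirank, precisely by upgrading such a bijection to a scheme morphism via a kernel and the classifying-map property of \cite[Corollary~2.65]{BBH08}), so without the flat family realizing $\cF\mapsto\cF\oplus\bigoplus_i\cO_{C_i}(-1)^{r_i-m}$ that step proves nothing about dimension. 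Since the flat family is easy to write down (it is the direct sum of a universal-type family with a constant sheaf), this route could be completed and would buy you slightly more --- an identification of the component with $Sym^m E_1$ up to the usual coarse-moduli caveats, not just its dimension --- but as submitted only your fallback is complete, and it coincides with the paper.
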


\begin{proof}  Suppose that $r_1$ is the minimum of the $r_i$'s. Any sheaf of the form
\begin{equation}\label{e:grad}
(\oplus_{i=1}^u\calL_i)\oplus (\oplus_{j=1}^{N}\mathcal{O}_{C_j}(-1)^{\oplus v_j})\,,
\end{equation}
where $u$ is at most $r_1$, $\calL_i$ are line bundles on $X$ all of them of multidegree $(0,\dots, 0)$ and $u+v_j=r_j$ for all $j$, is a semistable sheaf that defines a point in $\cM((r_1,\dots,r_N),0)$. By Corollary \ref{c:JHfactors}, if $\cE$ is   a semistable sheaf of degree 0 and multirank $\underline{r}(\cE)=(r_1,\dots, r_N)$, its graded object is of the form given by Equation \ref{e:grad}. Since the group of stable line bundles $\Pic_s^0(X)$ of degree 0 on $X$ is determined by the  exact sequence
$$
1\to \kappa^\ast\to \Pic_s^0(X)\to \prod_{i=1}^N \Pic^0(C_i)\to 1\, ,
$$ one gets that the dimension of this component is equal to $r_1$.
\end{proof}

In the case of a rational curve with one node or one  cusp, as
happens also for smooth elliptic curves, it is known \cite{BuKr05}
that all the Jordan-H\"{o}lder factors of any indecomposable sheaf are
isomorphic to each other. This is no longer true for cycles $X=E_N$ of projective lines, as we will now prove.

\begin{lem}\label{lem:Gr Atiyah}
For $m>1$, the graded object of the Atiyah indecomposable vector
bundle $\cF_m$ is $Gr(\cF_m)=\oplus_{i=1}^m
\mathcal{O}_X$
\end{lem}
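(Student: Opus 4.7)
My plan is to proceed by induction on $m$, using the defining exact sequence
$$0 \to \cF_{m-1} \to \cF_m \to \cO_X \to 0$$
from Lemma \ref{l:Atiyah}, together with the classification of stable degree $0$ sheaves given by Theorem \ref{t:stables} and the stability criterion of Lemma \ref{l:rank1}.

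First I would record the preliminary fact that each $\cF_m$ is $\mu_S$-semistable of slope $0$. Indeed, $\cO_X$ has multirank $(1,\dots,1)$ and multidegree $(0,\dots,0)$, hence is $\mu_S$-stable by Lemma \ref{l:rank1}; so by Lemma \ref{l:tensor} (applied with $\calL=\cO_X$) the bundle $\cF_m = \cF_m\otimes\cO_X$ is $\mu_S$-semistable. A straightforward additivity computation on the defining exact sequence shows that the Hilbert polynomial of $\cF_m$ is $mhs$, so $\mu_S(\cF_m)=0$, matching that of $\cO_X$.

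For the induction, the base case $m=1$ is immediate since $\cF_1=\cO_X$ is itself stable, so $Gr(\cF_1)=\cO_X$. Assume the statement for $m-1$, and fix a Jordan--H\"older filtration
$$0 = \cG_0 \subset \cG_1 \subset \cdots \subset \cG_{m-1} = \cF_{m-1}$$
of $\cF_{m-1}$ whose successive quotients are all isomorphic to $\cO_X$. Pulling this filtration forward through the inclusion $\cF_{m-1}\hookrightarrow\cF_m$ and appending $\cF_m$ itself produces a chain
$$0 = \cG_0 \subset \cG_1 \subset \cdots \subset \cG_{m-1} = \cF_{m-1}\subset \cF_m,$$
whose last successive quotient is $\cF_m/\cF_{m-1}\simeq\cO_X$, which is stable of slope $0$. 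Hence this is a Jordan--H\"older filtration of $\cF_m$, and taking the associated graded gives $Gr(\cF_m)\simeq Gr(\cF_{m-1})\oplus\cO_X \simeq \bigoplus_{i=1}^{m}\cO_X$, as desired.

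There is no real obstacle here; the only point requiring a moment of care is verifying that the successive quotients of a Jordan--H\"older filtration of $\cF_{m-1}$, pulled forward inside $\cF_m$, remain the same (which is automatic because $\cF_{m-1}\hookrightarrow \cF_m$ is an inclusion of sheaves), and that the appended quotient $\cO_X$ is indeed stable---both facts having already been established above. Note that the result is consistent with Theorem \ref{t:stables} and Corollary \ref{c:JHfactors}: the only stable sheaves that can appear as Jordan--H\"older factors of a locally free semistable sheaf of degree $0$ and multirank $(m,\dots,m)$ are line bundles of multidegree $(0,\dots,0)$, and the indecomposability of $\cF_m$ together with the structure of its defining extension forces each factor to be precisely $\cO_X$.
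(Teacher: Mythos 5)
Your proof is correct and follows essentially the same route as the paper's, which likewise deduces the result from the defining exact sequences of Lemma \ref{l:Atiyah} together with the stability of $\cO_X$ guaranteed by Lemma \ref{l:rank1}; you have simply written out the induction and the concatenation of Jordan--H\"older filtrations that the paper leaves implicit.
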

\begin{proof} This follows from the exact sequences that define $\cF_m$ in Lemma \ref{l:Atiyah} and from the fact that the structural sheaf $\cO_X$ of any cycle $X$ is stable by Lemma \ref{l:rank1}.
\end{proof}

\begin{prop}\label{p:graded} Let $\cE$ be a strictly semistable indecomposable sheaf on $X$ with Hilbert polynomial $P(s)=rs$. If $\cE$ is not locally free, its graded object is $Gr(\cE)\simeq \oplus_{i\in A} \cO_{C_i}(-1)$ for a subset $A\subseteq \{1,\dots,N\}$. If $\cE$ is locally free of rank $r$, then $Gr(\cE)$ is isomorphic either to $\calL^{\oplus r}$ for a line bundle $\calL$ of multidegree $\underline{d}(\calL)=(0,\dots,0)$, or to $ \oplus_{i=1}^N \cO_{C_i}(-1)^{\oplus r}$. \end{prop}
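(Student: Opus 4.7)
The plan is to apply the classification of indecomposable torsion-free sheaves on $E_N$ given by Theorem \ref{t:indecom}, then exploit Corollary \ref{c:JHfactors} (which restricts the possible Jordan--H\"older factors), Lemma \ref{l:DirectInverse}(2) (compatibility of $\mu_S$-semistability with \'etale pushforward along $\pi_s\colon E_{sN}\to E_N$), and Lemmas \ref{l:rank1}, \ref{l:tensor}, \ref{lem:Gr Atiyah} to compute $Gr(\cE)$ explicitly, splitting into the locally free and non-locally free cases provided by the classification.

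\textbf{Locally free case.} Write $\cE\simeq \pi_{s\ast}(\cM\otimes \cF_m)$ as in Theorem \ref{t:indecom}(1), with $\cM$ a line bundle on $E_{sN}$. Lemma \ref{l:DirectInverse}(2) applied to $\pi_s$ yields that $\cM\otimes\cF_m$ is $\mu_S$-semistable, and Lemma \ref{l:tensor} then forces $\cM$ to be semistable; additivity of $\chi$ on the Atiyah filtration of $\cF_m$ combined with $\chi(\cE)=0$ gives $\chi(\cM)=0$. If $\underline{d}(\cM)=(0,\dots,0)$, then $\cM$ is stable by Lemma \ref{l:rank1}(2); but the non-periodicity clause in Theorem \ref{t:indecom}(1) forbids this multidegree when $s>1$, so $s=1$ and $\cE=\cM\otimes\cF_m$. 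The filtration of Lemma \ref{l:Atiyah} has stable quotients all isomorphic to $\cM$, so $Gr(\cE)\simeq \cM^{\oplus m}$, which is the first alternative. Otherwise $\cM$ is strictly semistable of degree zero on $E_{sN}$, and Lemma \ref{l:rank1}(3) gives $Gr(\cM)\simeq \bigoplus_{j=1}^{sN}\cO_{D_j}(-1)$; refining the filtration of $\cM\otimes\cF_m$ by the Jordan--H\"older filtration of each of the $m$ successive quotients $\cM$ and pushing forward through the \'etale morphism $\pi_s$ (which restricts to an isomorphism on each component $D_j$ and realises each $C_i$ as the image of exactly $s$ such components) yields $Gr(\cE)\simeq \bigoplus_{i=1}^N \cO_{C_i}(-1)^{\oplus sm}$; since the constant rank of $\cE$ is $sm$, this is the second alternative.

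\textbf{Non-locally free case.} Here Theorem \ref{t:indecom}(2) writes $\cE=\pi_{s\ast}(i_\ast\cL)$ with $i\colon I_k\hookrightarrow E_{sN}$ a closed immersion realising $I_k$ as a proper subcurve. By Lemma \ref{l:DirectInverse}(2), $i_\ast\cL$ is $\mu_S$-semistable on $E_{sN}$. The decisive observation is that any Jordan--H\"older subquotient of $i_\ast\cL$ is supported inside $I_k$, whereas a line bundle of multidegree $(0,\dots,0)$ on $E_{sN}$ has full support; consequently no such line bundle can appear as a Jordan--H\"older factor, and Corollary \ref{c:JHfactors} applied on $E_{sN}$ forces every factor to be one of the $\cO_{D_j}(-1)$. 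Matching multiranks on $E_{sN}$ (where $i_\ast\cL$ has rank $1$ on each $D_j\subset I_k$ and $0$ elsewhere) shows $Gr(i_\ast\cL)\simeq \bigoplus_{D_j\subset I_k}\cO_{D_j}(-1)$, and pushing forward by $\pi_s$ gives $Gr(\cE)\simeq \bigoplus_{D_j\subset I_k}\cO_{\pi_s(D_j)}(-1)$, which after setting $A=\pi_s(\{D_j\subset I_k\})\subset\{1,\dots,N\}$ is the stated form.

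\textbf{Main obstacle.} The delicate point is the support argument in the non-locally free case: we must rule out degree-zero line bundles on $E_{sN}$ as Jordan--H\"older subquotients of $i_\ast\cL$, which rests on the fact that $I_k$ embeds as a \emph{proper} subcurve and that line bundles are torsion-free with full support. A secondary technical issue is verifying that the components of $I_k$ map injectively to components of $E_N$ under $\pi_s$, so that the set $A$ is genuinely a subset and not a multiset; this should follow from combining indecomposability of $\cE$ with the strict semistability hypothesis (together with Theorem \ref{t:stables}, which excludes stable non-locally-free sheaves other than $\cO_{C_i}(-1)$ and thus constrains the admissible $k$).
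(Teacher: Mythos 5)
Your argument is correct in substance and, in the locally free case, essentially reproduces the paper's proof: both reduce via Theorem \ref{t:indecom}(1) and Lemmas \ref{l:DirectInverse}, \ref{l:tensor} to the semistability of the line bundle upstairs, split according to whether its multidegree is $(0,\dots,0)$ (stable, giving $\calL^{\oplus r}$, with non-periodicity forcing $s=1$) or not (strictly semistable, giving $\oplus_i\cO_{C_i}(-1)^{\oplus r}$ after refining the Atiyah filtration and pushing forward by the finite, hence exact, $\pi_{s\ast}$). Where you genuinely diverge is the non-locally free case. The paper works directly on the chain $I_k$: it quotes \cite[Theorem 4.5]{LM05} for the fact that there are no stable line bundles of degree $-1$ on $I_k$ and that the unique strictly semistable one has graded object $\oplus_D\cO_D(-1)$, then pushes the Jordan--H\"older filtration forward by $p_k$. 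You instead transfer only as far as the cycle $E_{sN}$, apply Theorem \ref{t:stables}/Corollary \ref{c:JHfactors} there, and eliminate the multidegree-zero line bundles as possible factors by the observation that every Jordan--H\"older subquotient of $i_\ast\calL$ is supported on the proper subcurve $I_k$; the multirank count then pins down the factors. This is a legitimate alternative that stays inside the paper's own results and avoids the external classification of semistable line bundles on tree-like curves, at the modest cost of the (correct) support argument and of checking $\chi(i_\ast\calL)=0$ so that Corollary \ref{c:JHfactors} applies on $E_{sN}$. Finally, the issue you flag at the end --- whether the components of $I_k$ map injectively to components of $E_N$, i.e.\ whether $A$ is a set rather than a multiset --- is not resolved by your argument, but it is not resolved by the paper's proof either (which silently asserts the subset form after pushing forward by $p_k$); so this is a shared imprecision in the statement rather than a gap specific to your approach, and you are right not to claim more than you prove there.
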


\begin{proof} If $\cE$ is not locally free, by Theorem \ref{t:indecom}, $\cE\simeq p_{k\ast}(\calL)$ where $\calL$ is a line bundle of degree -1 on $I_k$ and $k$, $p_k$ and $\calL$ are determined by $\cE$. Since $p_k=\pi_r\circ i$ for some Galois covering $\pi_r$ and some closed immersion $i$, using Lemma \ref{l:DirectInverse} above and \cite[Lemma 3.2]{LM05}, one has that $\cE$ is $H$-semistable if and only if $\calL$ is $p_k^\ast(H)$-semistable. By \cite[Theorem 4.5]{LM05}, there are not stable line bundles of degree -1 on $I_k$ and there is exactly one strictly semistable line bundle whose graded object is $Gr(\calL)\simeq \oplus_D \mathcal{O}_{D}(-1)$ where the sum runs over all the irreducible components $D$ of $I_k$. Since $p_k$ is a finite morphism,  $p_{k\ast}$ is an exact functor, and then the direct image by $p_k$ of a Jordan-H\"{o}lder filtration for $\calL$ is a filtration for $\cE$. Hence the graded object is also $Gr(\cE)\simeq \oplus_{i\in A} \cO_{C_i}(-1)$ for a subset $A\subseteq \{1,\dots,N\}$.

Suppose now that $\cE$ is a vector bundle.  Assume first that $\cE=\calL\otimes \cF_r$ for a line bundle $\calL$ of multidegree $(0,\dots,0)$. By Lemma \ref{l:tensor}, $\calL$ is semistable;  if it is stable, then $Gr(\cE)=\calL^{\oplus r}$ by Lemma \ref{lem:Gr Atiyah}; if $\calL$ is strictly semistable, then its Jordan-H\"older factors are $\cO_{C_1}(-1),\dots, \cO_{C_N}(-1)$ by Corollary \ref{c:JHfactors}, and again by Lemma \ref{lem:Gr Atiyah}, one has that $Gr(\cE)\simeq \oplus_{i=1}^N \cO_{C_i}(-1)^{\oplus r}$.

By Theorem
\ref{t:indecom}, the only remaining case is when $\cE\simeq \pi_{s\ast}(\calL\otimes\cF_m)$ for an
\'{e}tale covering $\pi_s\colon E_{sN}\to X=E_N$, a line bundle $\calL$ on
$E_{sN}$ with degree zero and non-periodic multidegree,  and a number $m\in \mathbb{N}$. By Lemmas
\ref{l:DirectInverse} and \ref{l:tensor}, $\cE$ is $H$-semistable
if and only if $\calL$ is $\pi_r^\ast H$-semistable. Using
Lemma \ref{l:rank1}, we see that the line bundle $\calL$ is
not stable because otherwise one would have
$\underline{d}(\calL)=(0,\dots,0)$ and this contradicts the
non-periodicity of $\underline{d}(\calL)$. Proceeding as above one sees that the graded object of $\calL\otimes\cF_m$ is $\oplus \cO_C(-1)^{\oplus m}$, where the sum runs over all the irreducible components $C$ of $E_{sN}$. Since $\pi_s$ is a finite morphism, one sees that $Gr(\cE)\simeq\oplus_{i=1}^N \cO_{C_i}(-1)^{\oplus r}$.
\end{proof}

This shows that in this case there exist indecomposable vector
bundles whose Jordan-H\"{o}lder factors are non-perfect.

\subsection{The biggest component of the moduli space}\label{ss:bigcomp}

In this subsection we describe completely the component
$\cM_X((\bar r,\dots, \bar r),0)\subset \cM_X(r,0)$ ($r=\bar rh$)
of semistable sheaves of multirank $(\bar r,\dots, \bar r)$ and
degree zero for the curve $E_N$ (cf.~Figure \ref{fig:en}) with
respect to an arbitrary polarization $H$.

For any smooth elliptic curve or a rational curve with one node or one cusp, it is well known
that the moduli space $\cM^{s}((1),0)$ is isomorphic to the curve. This is no longer true for
reducible fibers. In the particular case of $E_N$, the
moduli space $\cM((1,\dots,1),0)$ is isomorphic to a rational curve with a node $E_1$. This
was proved in \cite{Ca2} if $N=2$ and in \cite{LM05a} for any $N\geq 2$.

\begin{figure}[ht] 
 \centering
\includegraphics[width=1.5in]{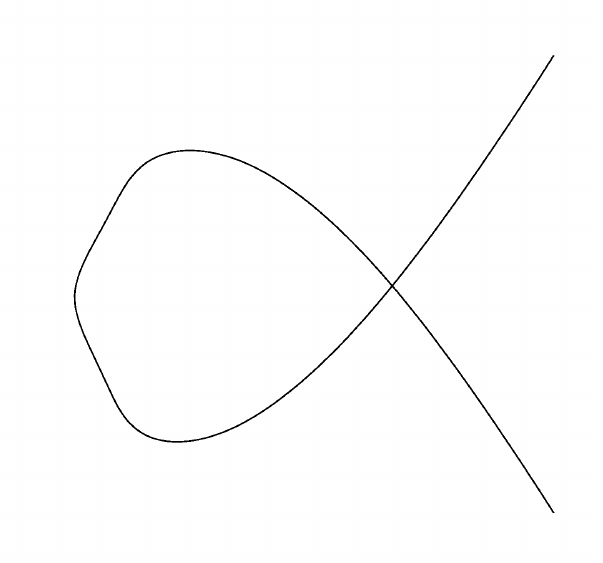} \caption{The rational curve with a node $E_1$} \label{fig:nodal}
\end{figure}

Let us describe the isomorphism $\cM((1,\dots,1),0)\iso E_1$,
following \cite[Proposition~3.2]{LM05a}, and the inverse isomorphism. Let
$C_1,\dots,C_N$ be the rational components of $X=E_N$ ordered
cyclically and let us denote by $\gamma\colon X \to E_1$ the
morphism which contracts $C_2,\dots,C_N$ and gives an isomorphism
$C_1-\{x_1,x_N\}\iso E_1- \{\bar z\}$, where $\{x_1,x_N\}$ are the
intersection points of $C_1$ with the other components and $\bar z$
is the singular point of $E_1$. Let us consider the sheaf on
$X\times X$
\begin{equation} \label{e:univ}
\cE=\cI_\Delta^\ast\otimes\pi_1^\ast\cO_X(-y_0)\,,
\end{equation}
where  $\cI_\Delta$ is the ideal sheaf of the diagonal immersion
$\delta$, $\pi_1$ is the canonical projection onto the first factor,
and $y_0$ is a fixed smooth point of $C_1$. For every point $y\in X$
the restriction $\cE_y$ of $\cE$ to $X\times\{y\}$ is a semistable
pure dimension one sheaf of multirank $(1,\dots,1)$ and degree $0$
\cite[Proposition~3.2]{LM05}. By \cite[Corollary~2.65]{BBH08}, the
Fourier-Mukai functor $\fmf{\cE}{X}{X}$ induces a classifying
morphism $\varphi\colon X\to \cM((1,\dots,1),0)$. Moreover,
for all points $y\in C_2\cup\dots\cup C_N$, the sheaves $\cE_y$ are
S-equivalent \cite[Proposition~3.2]{LM05a}, and then $\varphi$ factors
through a morphism
$$
\bar\varphi\colon E_1 \to \cM((1,\dots,1),0)\,,
$$
which one proves to be an isomorphism \cite{LM05a}. We shall give here a different proof (see Proposition \ref{p:isocase1}).

The smooth
points $y\in C_1$ are mapped to stable line bundles
$\mathfrak{m}_y^\ast \otimes \cO_X(-y_0)$ and all the remaining points
are mapped to the strictly semistable sheaves, all of them
S-equivalent to $\cO_{C_1}(-1)\oplus\dots\oplus \cO_{C_N}(-1)$.

Let us describe the action on $\dbc{X}$ of the quasi-inverse
$\fmf{\cE^\ast[1]}{X}{X}$ of $\fmf{\cE}{X}{X}$. Using the exact
sequence $0\to \cI_\Delta \to \cO_{X\times X}\to \delta_\ast\cO_X
\to 0$ and flat cohomology base-change, we get for every complex
$\cplx F$  in $\dbc{X}$ an exact triangle
$$
\fmf{\cE^\ast[1]}{X}{X}(\cplx F) \to p^\ast(\bR p_\ast(\cplx
F\otimes\cO_X(y_0))[1] \to \cplx F\otimes\cO_X(y_0)[1] \to
\fmf{\cE^\ast[1]}{X}{X}(\cplx F) [1]\,,
$$
where $p$ is the projection of $X$ onto one point. Applying this
formula to the Jordan-H\"older factors $\cO_{C_i}(-1)$ we have
\begin{align*}
\fmf{\cE^\ast[1]}{X}{X}(\cO_{C_1}(-1))& = \cI_{C_1}[1] \,,\quad \text{$\cI_{C_1}$ being the ideal sheaf of $C_1$ in $X$,}\\
\fmf{\cE^\ast[1]}{X}{X}(\cO_{C_j}(-1))& = \cO_{C_j}(-1)\,,\quad
\text{for $j>1$.}
\end{align*}
We now consider the integral functor $\dbc{X} \to \dbc{E_1}$
obtained as the composition of the  quasi-inverse
$\fmf{\cE^\ast[1]}{X}{X}$ of $\fmf{\cE}{X}{X}$ and the derived
push-forward $\bR\gamma_\ast\colon \dbc{X}\to \dbc{E_1}$. This is
the integral functor with kernel
\begin{equation}\label{e:kernel}
\cplx{K}=\bR (1\times \gamma)_\ast \cE^\ast [1]\,.
\end{equation}
\begin{lem} \label{l:inverse} If $\cF$ is a semistable sheaf on $X$ of  multirank $(1,\dots,1)$ and degree 0, then $\fmf{\cplx K}{X}{E_1}(\cF)=\cO_{z}$ for a uniquely determined point $z\in E_1$. Moreover $z$ is the singular point of $E_1$ if and only if $\cF$ is strictly semistable.
\end{lem}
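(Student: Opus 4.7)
The plan is to combine the factorization $\fmf{\cplx K}{X}{E_1}=\bR\gamma_\ast\circ\fmf{\cE^\ast[1]}{X}{X}$ recorded before the lemma with the dichotomy given by Lemma~\ref{l:rank1}: any such $\cF$ is either a stable line bundle of multidegree $(0,\dots,0)$, or a strictly semistable sheaf whose graded object is $\bigoplus_{i=1}^N\cO_{C_i}(-1)$.

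For the strictly semistable case, I would use the explicit computations of $\fmf{\cE^\ast[1]}{X}{X}$ on Jordan-H\"older factors already given in the excerpt. For $j>1$, the map $\gamma$ contracts $C_j$ to the node $\bar z$ and $\bR\Gamma(\mathbb{P}^1,\cO(-1))=0$, so $\bR\gamma_\ast\cO_{C_j}(-1)=0$. For the component $C_1$, pushing forward the short exact sequence $0\to\cI_{C_1}\to\cO_X\to\cO_{C_1}\to 0$ and using that $\bR\gamma_\ast\cO_X=\cO_{E_1}$ and that $\gamma|_{C_1}\colon C_1\to E_1$ is precisely the normalization $\pi\colon\tilde E_1\to E_1$, one gets a triangle whose cohomology sequence identifies $\bR\gamma_\ast\cI_{C_1}[1]$ with the cokernel of $\cO_{E_1}\hookrightarrow\pi_\ast\cO_{\tilde E_1}$, namely $\cO_{\bar z}$. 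A Jordan-H\"older filtration of $\cF$ whose quotients are among the $\cO_{C_i}(-1)$ (each appearing exactly once) combined with the exactness of $\fmf{\cplx K}{X}{E_1}$ on triangles then shows, by induction on the filtration steps, that all contributions vanish except for the one coming from $\cO_{C_1}(-1)$; therefore $\fmf{\cplx K}{X}{E_1}(\cF)=\cO_{\bar z}$.

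For the stable case I would instead apply directly the exact triangle for $\fmf{\cE^\ast[1]}{X}{X}$ displayed just before the lemma, with input $\cF$. The sheaf $\cF\otimes\cO_X(y_0)$ has multidegree $(1,0,\dots,0)$ and Euler characteristic one. Serre duality together with the elementary observation that a global section of a line bundle of multidegree $(-1,0,\dots,0)$ must vanish identically---the restriction to $C_1$ is zero and the vanishing propagates around the cycle through the gluing conditions at the nodes---gives $h^1(X,\cF\otimes\cO_X(y_0))=0$, so $\bR\Gamma(X,\cF\otimes\cO_X(y_0))=k$ concentrated in degree zero. The triangle reduces to
$$
\fmf{\cE^\ast[1]}{X}{X}(\cF)\to\cO_X[1]\to\cF\otimes\cO_X(y_0)[1]\to+1
$$
whose middle arrow is a global section $s$. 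Inspecting $s$ component by component, a cyclic propagation argument shows that $s|_{C_j}$ is a non-vanishing constant for every $j>1$, while $s|_{C_1}$ is a section of $\cO_{\mathbb{P}^1}(1)$ having a unique simple zero at a smooth point $y\in C_1\setminus\{x_1,x_N\}$ (the nodes must lie outside the zero locus, otherwise the vanishing propagates cyclically and forces $s\equiv 0$). The cokernel of $\cO_X\to\cF\otimes\cO_X(y_0)$ is then $\cO_y$, whence $\fmf{\cE^\ast[1]}{X}{X}(\cF)=\cO_y$ and finally $\fmf{\cplx K}{X}{E_1}(\cF)=\bR\gamma_\ast\cO_y=\cO_{\gamma(y)}$, a skyscraper at a smooth point $z=\gamma(y)\in E_1\setminus\{\bar z\}$.

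Uniqueness of $z$ is automatic since a skyscraper $\cO_z$ determines its support. The main obstacle I expect is the component-by-component analysis in the stable case: one must determine precisely where the section $s$ vanishes without recourse to the a priori identification $\cF\simeq\mathfrak{m}_y^\ast\otimes\cO_X(-y_0)$, and argue that the cyclic combinatorics of $E_N$ forces the zero to be a smooth point of $C_1$.
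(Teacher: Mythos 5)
Your proof is correct, and the strictly semistable half is essentially the paper's own argument: the paper also reduces to the Jordan--H\"older factors, obtaining $\fmf{\cplx K}{X}{E_1}(\cO_{C_j}(-1))=0$ for $j>1$ and $\fmf{\cplx K}{X}{E_1}(\cO_{C_1}(-1))=\bR\gamma_\ast(\cI_{C_1})[1]=\cO_{\bar z}$; it just computes the latter by noting that $\cI_{C_1}$ is supported on the contracted locus $C_2\cup\dots\cup C_N$ with $h^0=0$ and $h^1=1$, whereas you identify $R^1\gamma_\ast\cI_{C_1}$ with $\Coker(\cO_{E_1}\hookrightarrow \pi_\ast\cO_{\tilde E_1})$ --- an equivalent computation, valid since $\gamma|_{C_1}$ is indeed the normalization of $E_1$ and $\bR\gamma_\ast\cO_X=\cO_{E_1}$. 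The genuine divergence is in the stable case. The paper simply invokes the fact, taken from the discussion preceding the lemma and ultimately from \cite[Proposition~3.2]{LM05a}, that every stable $\cF$ equals $\fmf{\cE}{X}{X}(\cO_y)=\mathfrak m_y^\ast\otimes\cO_X(-y_0)$ for a smooth point $y\in C_1$, whence $\fmf{\cplx K}{X}{E_1}(\cF)=\bR\gamma_\ast\cO_y=\cO_{\gamma(y)}$ is immediate. You instead feed an arbitrary stable $\cF$ (a line bundle of multidegree $(0,\dots,0)$ by Lemma \ref{l:rank1}) directly into the exact triangle: your Serre-duality and cyclic-propagation argument giving $h^1(\cF\otimes\cO_X(y_0))=0$ and $h^0=1$ is sound, and the same propagation correctly forces the evaluation section to be nowhere vanishing on $C_2,\dots,C_N$ and at the nodes, hence to have a single simple zero at a smooth point $y\in C_1\setminus\{x_1,x_N\}$, so that $\fmf{\cE^\ast[1]}{X}{X}(\cF)=\Coker\bigl(\cO_X\to\cF\otimes\cO_X(y_0)\bigr)=\cO_y$ and $\fmf{\cplx K}{X}{E_1}(\cF)=\cO_{\gamma(y)}$ with $\gamma(y)\neq\bar z$. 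This costs you the component-by-component analysis you flagged as the main obstacle, but it buys self-containedness: it does not presuppose that the classifying map $\varphi$ hits every stable point, and in fact re-proves that surjectivity, since applying $\fmf{\cE}{X}{X}$ back yields $\cF\simeq\fmf{\cE}{X}{X}(\cO_y)$.
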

\begin{proof}
If $\cF$ is stable, then $\cF=\fmf{\cE}{X}{X}(\cO_y)$ for a smooth
point $y\in C_1$, so that  $\fmf{\cplx
K}{X}{E_1}(\cF)=\cO_{\gamma(y)}$.  Let us now compute the image of
the Jordan-H\"older factors. If $j>1$, the restriction of $\gamma$
to $C_j$ factors through the singular point $\bar z$ of $E_1$, so
that
$$
\fmf{\cplx K}{X}{E_1}(\cO_{C_j}(-1))= \bR \gamma_\ast
(\cO_{C_j}(-1)) = 0\,.
$$
In the case $j=1$, the ideal  $\cI_{C_1}$ is supported on
$C_2\cup\dots \cup C_N$ and one has $H^0(X,\cI_{C_1})=0$, $\dim H^1(X,\cI_{C_1})=1$. As above, the
restriction of $\gamma$ to $C_2\cup\dots \cup C_N$ factors through
the singular point $\bar z$ of $E_1$ and one has
$$
\fmf{\cplx K}{X}{E_1}(\cO_{C_1}(-1))= \bR \gamma_\ast (\cI_{C_1})[1]
= \cO_{\bar z}\,.
$$
It follows that if $\cF$ is strictly semistable, by applying
$\fmf{\cplx K}{X}{E_1}$ to a Jordan-H\"older filtration, one obtains
$\fmf{\cplx K}{X}{E_1}(\cF)=\cO_{\bar z}$.
\end{proof}

\begin{prop} \label{p:isocase1} The integral functor $\fmf{\cplx K}{X}{E_1}$ induces a morphism
$$
\eta\colon \cM((1,\dots,1),0) \to E_1\,,
$$ which is the inverse of
$\bar\varphi$.
\end{prop}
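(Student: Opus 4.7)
The plan is to derive $\eta$ from the integral functor $\fmf{\cplx K}{X}{E_1}$ via the universal property of the coarse moduli space, and then to verify that it is a two-sided inverse of $\bar\varphi$.

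First, I construct $\eta$. For any scheme $T$ and any $T$-flat family $\cG$ on $X\times T$ whose fibres are $\mu_S$-semistable sheaves of multirank $(1,\dots,1)$ and degree $0$, the relative integral functor with kernel the pullback of $\cplx K$ to $X\times E_1\times T$ produces a complex on $E_1\times T$. By Lemma \ref{l:inverse} together with flat base change, each of its fibres is isomorphic to $\cO_{z(t)}$ for a uniquely determined $z(t)\in E_1$; since every fibre is a skyscraper of length one, cohomology and base change force this complex to be a sheaf concentrated in degree zero, flat over $T$ of constant relative length one. It is therefore the structure sheaf of the graph of a uniquely determined morphism $\tau_\cG\colon T\to E_1$. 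The assignment $\cG\mapsto\tau_\cG$ is compatible with base change and depends only on the fibrewise $S$-equivalence class of $\cG$, so the universal property of the coarse moduli space $\cM=\cM((1,\dots,1),0)$ yields the desired morphism $\eta\colon\cM\to E_1$.

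Next, I verify $\eta\circ\bar\varphi=\Id_{E_1}$. For $y\in E_1\setminus\{\bar z\}$, let $\tilde y\in C_1$ be its preimage under $\gamma|_{C_1\setminus\{x_1,x_N\}}$. Then $\bar\varphi(y)=[\cE_{\tilde y}]$ with $\cE_{\tilde y}\simeq \cO_X(\tilde y-y_0)$, so $\cE_{\tilde y}\otimes\cO_X(y_0)\simeq \mathfrak{m}_{\tilde y}^\ast$. Since $\omega_X\simeq\cO_X$, Serre duality gives $H^1(X,\mathfrak{m}_{\tilde y}^\ast)\simeq H^0(X,\mathfrak{m}_{\tilde y})^\ast$; any global section of $\cO_X(-\tilde y)$ must vanish on $C_1\cong\mathbb{P}^1$ and, being zero at the nodes, propagates to zero on the whole cycle, so $H^0(X,\mathfrak{m}_{\tilde y})=0$. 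Combined with $\chi(\mathfrak{m}_{\tilde y}^\ast)=1$, this gives $\bR\Gamma(X,\mathfrak{m}_{\tilde y}^\ast)\simeq k$ in degree zero. Plugging $\cplx F=\cE_{\tilde y}$ into the defining triangle of $\fmf{\cE^\ast[1]}{X}{X}$ recorded in the excerpt, the middle term reduces to $\cO_X[1]$, and rotating identifies $\fmf{\cE^\ast[1]}{X}{X}(\cE_{\tilde y})$ with the cokernel $\cO_{\tilde y}$ of the canonical inclusion $\cO_X\hookrightarrow\mathfrak{m}_{\tilde y}^\ast$. Therefore
\[
\fmf{\cplx K}{X}{E_1}(\cE_{\tilde y})=\bR\gamma_\ast\cO_{\tilde y}=\cO_{\gamma(\tilde y)}=\cO_y,
\]
so $\eta(\bar\varphi(y))=y$. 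The case $y=\bar z$ follows from Lemma \ref{l:inverse} applied to the strictly semistable $\bar\varphi(\bar z)$. Reducedness of $E_1$ promotes these pointwise identities to the scheme-theoretic equality $\eta\circ\bar\varphi=\Id_{E_1}$; in particular $\bar\varphi$ is a closed immersion.

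Finally, by Theorem \ref{t:stables} every $\mu_S$-stable class in $\cM$ is represented by a line bundle of multidegree $(0,\dots,0)$, which is necessarily of the form $\cO_X(y-y_0)=\cE_y$ for a smooth $y\in C_1$ and therefore lies in the image of $\bar\varphi$; by Lemma \ref{l:rank1} the unique strictly semistable $S$-equivalence class is $\bar\varphi(\bar z)$. Hence $\bar\varphi$ is surjective on closed points, and combined with the closed immersion property and reducedness of $\cM((1,\dots,1),0)$ we conclude that $\bar\varphi$ is an isomorphism whose inverse is $\eta$. The principal technical obstacle lies in the first paragraph: rigorously showing, via cohomology and base change, that the relative transform of $\cG$ is a $T$-flat sheaf in degree zero of constant relative length one, which is what enables the extraction of a morphism to $E_1$ from an arbitrary flat family and the subsequent invocation of the universal property of $\cM$.
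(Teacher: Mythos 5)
Your proposal is correct and follows essentially the same route as the paper: construct $\eta$ from Lemma \ref{l:inverse} via the classifying-morphism property of the coarse moduli space (the paper simply cites \cite[Corollary~2.65]{BBH08} where you unwind the base-change argument), check $\eta\circ\bar\varphi=\Id$ on closed points, and promote this to a scheme-theoretic identity using that $E_1$ and $\cM((1,\dots,1),0)$ are projective and reduced. The only (harmless) variation is in the final step: the paper also verifies $\bar\varphi\circ\eta=\Id$ on closed points and repeats the density/reducedness argument on $\cM((1,\dots,1),0)$, whereas you instead observe that $\bar\varphi$ is a section of the separated morphism $\eta$, hence a closed immersion, and conclude from its surjectivity on closed points (via Theorem \ref{t:stables} and Lemma \ref{l:rank1}) together with reducedness of the moduli space.
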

\begin{proof} The morphism $\eta$  exists by  Lemma \ref{l:inverse} and \cite[Corollary~2.65]{BBH08}. One checks directly that $\eta\circ\bar\varphi$ coincides with the identity on the closed points. Since $E_1$ is separated, the subscheme $Z\hookrightarrow E_1$ of coincidences of $\eta\circ\bar\varphi$ and the identity is closed; moreover  the closed points are dense because $E_1$ is projective, and thus, $Z$ is topologically equal to $E_1$. Since $E_1$ is reduced, $Z$ is algebraically equal to $E_1$ as well, so that $\eta\circ\bar\varphi$ is equal to the identity. Taking into account that $\cM((1,\dots,1),0)$ is projective and reduced, the same argument proves that $\bar\varphi\circ \eta$ is the identity as well.
\end{proof}

Our next aim is to find the relationship between the moduli space $\cM((\bar r,\dots,\bar r),0)$ of semistable pure dimension one sheaves of
multirank $(\bar r,\dots,\bar r)$ and degree $0$ on a cycle $E_N$,  and the symmetric product $Sym^{\bar r} E_1$ of the rational curve with a node.


The following result is known but we could not find a suitable reference:

\begin{lem}\label{lem:reduced}
The moduli space $\cM((\bar r,\dots,\bar r),0)$ is reduced.
\end{lem}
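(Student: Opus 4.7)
The plan is to realize $\cM((\bar r,\dots,\bar r),0)$ as a GIT quotient of an open subscheme of a Quot scheme and then invoke Strickland's theorem \cite{Str82} to conclude reducedness. Concretely, following Simpson's construction \cite{Simp96a}, fix $m\gg 0$ large enough for every $\mu_S$-semistable sheaf $\cE$ with Hilbert polynomial $P(s)=\bar r h s$ to be $m$-regular, and set $V=k^{P(m)}$. Then $\cM((\bar r,\dots,\bar r),0) \simeq R^{ss}/\!\!/SL(V)$, where $R^{ss}$ is the open subscheme of $\operatorname{Quot}(V\otimes\cO_X(-m),P)$ parametrizing GIT-semistable quotients whose underlying sheaf has multirank $(\bar r,\dots,\bar r)$.

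Since $SL(V)$ is reductive and GIT quotients by reductive groups preserve reducedness, the claim is reduced to showing that $R^{ss}$ itself is reduced. Strickland's theorem from \cite{Str82} provides exactly this for Quot schemes of pure dimension one sheaves on projective curves whose only singularities are ordinary nodes. As the cycle $E_N$ is a reduced projective curve whose only singularities are transverse crossings of rational components, the hypotheses of Strickland's result are fulfilled, and one obtains the reducedness of $R^{ss}$, hence of $\cM((\bar r,\dots,\bar r),0)$.

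The main obstacle is that, by Theorem \ref{t:stables}, the moduli space $\cM((\bar r,\dots,\bar r),0)$ contains no stable points once $\bar r\geq 2$, so one cannot fall back on a direct deformation-theoretic argument at a smooth stable point; all of the reducedness must come from the ambient Quot scheme. The careful step is to match the precise hypotheses of Strickland's statement to the present setup, in particular confirming that the graded objects arising from Proposition \ref{p:graded} appear as closed points of the scheme structure on $R^{ss}$ that Strickland's result certifies to be reduced. Once this verification is carried out, the descent of reducedness through the reductive GIT quotient is automatic.
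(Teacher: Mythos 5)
Your overall architecture --- Simpson's GIT presentation $\cM((\bar r,\dots,\bar r),0)\simeq R^{ss}/\!\!/SL(V)$, reducedness of $R^{ss}$, and descent of reducedness through the quotient by a reductive group (invariants of a reduced ring form a reduced ring) --- is sound, and it is in fact the route behind the reference the paper gives: the paper supplies no argument of its own, only the citation of Seshadri's Ast\'erisque (Huiti\`eme partie, Th\'eor\`eme 18 for $\bar r=2$) together with Strickland's paper \cite{Str82} for the ingredient that was missing in higher rank. Your observation that Theorem \ref{t:stables} leaves no stable points in this component for $\bar r\ge 2$, so that one cannot argue at a smooth stable point, is also correct and relevant.

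There is, however, a genuine gap at exactly the step you flag as ``the careful step.'' Strickland's theorem is not a statement about Quot schemes of pure sheaves on nodal curves, and it does not ``provide exactly'' the reducedness of $R^{ss}$: it is a statement about determinantal varieties, namely that the scheme of pairs of matrices $(A,B)$ with $AB=0$ and $BA=0$ (the conormal variety of a determinantal variety) is reduced, with normal Cohen--Macaulay components. The bridge from this algebraic statement to the Quot scheme is Seshadri's local analysis: a torsion-free sheaf on a curve with ordinary nodes is locally of the form $\cO^a\oplus\mathfrak{m}^b$ at each node, and the completed local ring of the Quot scheme at the corresponding point is identified, up to a formally smooth factor, with the completed local ring of such a variety of matrix pairs; only after this identification does Strickland's result yield reducedness of $R^{ss}$, hence of the moduli space. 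That identification is the substantive content of the proof, and it cannot be replaced by the assertion that the hypotheses of Strickland's theorem ``are fulfilled'' because $E_N$ has only nodal singularities. As written, your argument attributes to \cite{Str82} a theorem it does not contain and omits the deformation-theoretic step that actually connects the two.
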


A proof of Lemma \ref{lem:reduced} in the case $\bar r=2$ can be found in \cite[Huiti\^{e}me partie, Theor\`eme 18]{Ses82}. Seshadri shows also in \cite{Ses82} that the case $\bar r>2$ follows from  a property of certain determinantal varieties, that was not proved at the time; however the property was established later by Strickland \cite{Str82} and this completed the proof (cf.~also \cite{Se08}).

\begin{thm}\label{t:sym} Assume $\bar r>1$. There exists a scheme isomorphism
$$
\cM((\bar r,\dots,\bar r),0) \iso Sym^{\bar r} E_1\,.
$$
\end{thm}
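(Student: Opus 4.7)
The plan is to construct mutually inverse morphisms between the two spaces by extending the Fourier--Mukai machinery developed for the rank-one case in Proposition \ref{p:isocase1} and Lemma \ref{l:inverse}. Specifically, I will promote the integral functor $\fmf{\cplx K}{X}{E_1}$ from a map on single objects to a classifying morphism of moduli spaces.

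For the forward map, let $\cF$ be a semistable sheaf of multirank $(\bar r,\dots,\bar r)$ and degree $0$. By Corollary \ref{c:JHfactors}, the Jordan--H\"older factors of $\cF$ are either stable line bundles $\calL_1,\dots,\calL_a$ of multidegree $(0,\dots,0)$ or sheaves of the form $\cO_{C_i}(-1)$, and the multirank hypothesis forces each $\cO_{C_i}(-1)$ to appear with the same multiplicity $\bar r-a$. Applying $\fmf{\cplx K}{X}{E_1}$ inductively along a Jordan--H\"older filtration and using Lemma \ref{l:inverse} on the simple factors (so that each $\calL_j$ contributes one skyscraper $\cO_{z_j}$ at a smooth point, each copy of $\cO_{C_1}(-1)$ contributes $\cO_{\bar z}$, and each copy of $\cO_{C_j}(-1)$, $j>1$, contributes zero), I expect $\fmf{\cplx K}{X}{E_1}(\cF)$ to be a pure dimension zero sheaf on $E_1$ of total length $a+(\bar r-a)=\bar r$. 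Together with the standard base-change properties of integral functors applied to a local universal family, this yields a classifying morphism into $\cM_{E_1}(0,\bar r)\simeq Sym^{\bar r} E_1$, giving
$$
\tilde\eta:\cM((\bar r,\dots,\bar r),0)\to Sym^{\bar r}E_1.
$$

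For the inverse, I will pull back the universal family realizing $\bar\varphi:E_1\iso\cM((1,\dots,1),0)$ along the $\bar r$ projections $E_1^{\bar r}\to E_1$, form the direct sum along $X$ to obtain an $S_{\bar r}$-equivariant family on $X\times E_1^{\bar r}$ of semistable sheaves with multirank $(\bar r,\dots,\bar r)$ and degree zero, and descend to $X\times Sym^{\bar r}E_1$. The universal property of $\cM$ produces
$$
\tilde\varphi:Sym^{\bar r}E_1\to\cM((\bar r,\dots,\bar r),0),
$$
which on a $0$-cycle $\sum n_i[z_i]$ outputs the $S$-equivalence class of $\bigoplus_i \bar\varphi(z_i)^{\oplus n_i}$. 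A direct comparison on closed points, combined with the analysis above, shows that $\tilde\eta\circ\tilde\varphi$ and $\tilde\varphi\circ\tilde\eta$ are the identity on closed points. Since both $\cM((\bar r,\dots,\bar r),0)$ (by Lemma \ref{lem:reduced}) and $Sym^{\bar r}E_1$ (as $E_1$ is reduced) are projective and reduced, agreement on closed points forces equality of the two compositions, giving the desired scheme isomorphism.

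The main technical obstacle I foresee is Step~2, the construction of the universal family over $Sym^{\bar r}E_1$ by $S_{\bar r}$-equivariant descent from $E_1^{\bar r}$: one must verify that the family of direct sums is flat over $Sym^{\bar r}E_1$ and that the $S_{\bar r}$-action lifts to the family in a way compatible with descent, despite the fact that $E_1$ is singular at $\bar z$ and that at the diagonals of $E_1^{\bar r}$ several JH-factors collapse to isomorphic summands, so that the linearization is not free on the nose. A secondary subtlety in Step~1 is to verify that no higher cohomology sheaf is introduced when one passes from the graded object to a non-split extension, which I expect to handle via the long exact sequence associated to $\fmf{\cplx K}{X}{E_1}$ applied to the filtration, observing that the only potentially dangerous factors $\cO_{C_j}(-1)$ with $j>1$ map to zero on both sides and therefore cannot create a $\calH^{-1}$ term.
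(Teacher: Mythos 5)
Your forward map coincides with the paper's: the paper applies $\fmf{\cplx K}{X}{E_1}$ along a Jordan--H\"older filtration of $\cF$, obtains a skyscraper sheaf of total length $\bar r$ (a length-$a_i$ subscheme at a smooth point for each stable line bundle factor $\calL_i^{\oplus a_i}$, and a length-$(\bar r-u)$ subscheme at $\bar z$ from the $\cO_{C_j}(-1)$ blocks), and invokes the classifying-morphism result to get $\eta_r\colon \cM((\bar r,\dots,\bar r),0)\to Sym^{\bar r}E_1$. The concluding rigidity argument (agreement on closed points plus projectivity and reducedness of both sides, the latter by Lemma \ref{lem:reduced}) is also exactly the paper's.

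The gap is in your Step 2, and it is precisely the obstacle you flag: the $S_{\bar r}$-equivariant family on $X\times E_1^{\bar r}$ does \emph{not} descend to $X\times Sym^{\bar r}E_1$. Over a diagonal point $(z,\dots,z)$ the stabilizer permutes the $n_i$ isomorphic summands $\bar\varphi(z)^{\oplus n_i}$, so it acts nontrivially on the fiber of the family; this is a genuine obstruction to descent of the sheaf, not a technicality to be checked, and no universal family over $Sym^{\bar r}E_1$ is produced this way. The paper sidesteps the issue entirely: it never descends a family, but instead composes $E_1^{\bar r}\to \cM((1,\dots,1),0)^{\bar r}\xrightarrow{\oplus}\cM((\bar r,\dots,\bar r),0)$ and observes that this morphism of schemes is $S_{\bar r}$-invariant, hence factors through the categorical quotient $Sym^{\bar r}E_1$ to give $\bar\varphi_{\bar r}$. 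Since $\cM((\bar r,\dots,\bar r),0)$ is only a coarse moduli space, factoring the \emph{morphism} through the quotient requires nothing about families, and injectivity on closed points then follows from Corollary \ref{c:JHfactors}. If you replace your descent construction by this factorization, the rest of your argument goes through as written.
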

\begin{proof}
Using a smooth point $y_0$ of $C_1$ as above, we
construct an isomorphism $\bar\varphi\colon E_1 \to
\cM((1,\dots,1),0)$. Since the direct sum of $\bar r$ semistable
sheaves of  multirank $(1,\dots,1)$ and degree 0 is semistable of
multirank $(\bar r,\dots,\bar r)$ and degree 0, we have a morphism
$$
E_1\times\overset{\overset{\bar r}{\smile}}{\cdots}\times E_1 \to
\cM((1,\dots,1),0)
\times\overset{\overset{\bar r}{\smile}}{\cdots}\times
\cM((1,\dots,1),0) \xrightarrow{\oplus}
\cM((\bar r,\dots,\bar r),0)\,.
$$
 This morphism factors through the $\bar r$-th symmetric product $Sym^{\bar r} E_1$ and then induces a morphism $\bar\varphi_{\bar r} \colon Sym^{\bar r} E_1\to \cM((\bar r,\dots,\bar r),0)$, which is one-to-one on closed points by Corollary \ref{c:JHfactors}.

If $\cF$ is a semistable sheaf of multirank $(\bar r, \dots, \bar r)$ and degree 0, by Corollary \ref{c:JHfactors}, its graded object with respect to a Jordan-H\"older filtration is
$Gr(\cF)=\calL_1^{\oplus a_1}\oplus\dots\oplus \calL_m^{\oplus a_m}\oplus (\oplus_{j=1}^{N}\mathcal{O}_{C_j}(-1))^{\oplus (\bar r-u)}$, where $\calL_i$ are stable line bundles of degree 0 and $u=a_1+\dots +a_m$. If $\cplx K$ is the kernel
defined by Equation \eqref{e:kernel}, using a Jordan-H\"older filtration of $\cF$ as in the proof of Lemma \ref{l:inverse}, one sees that
$$
\fmf{\cplx K}{X}{E_1}(\cF)=\cO_{Z_1}\oplus \dots \oplus \cO_{Z_m}\oplus \cO_{\bar Z}\,,
$$
where $Z_i$ is a zero dimensional closed subscheme of $E_1$ of length $a_i$ supported at a smooth point $z$ and $\bar Z$ is a zero dimensional closed subscheme  of length $\bar r-u$ supported at the singular point $\bar z$. Then $\fmf{\cplx K}{X}{E_1}(\cF)$ is a skyscraper sheaf of length $\bar r$ on $E_1$ and by \cite[Corollary 2.65]{BBH08} there exists a morphism
$$
\eta_r\colon \cM((\bar r,\dots,\bar r),0) \to Sym^{\bar r} E_1\,,
$$
which is the inverse of $\bar\varphi_{\bar r}$ on closed points. Since $Sym^{\bar r} E_1$ is projective and reduced, proceeding as in the proof of Proposition \ref{p:isocase1} we see that $\eta_r \circ \bar\varphi_{\bar r}\colon Sym^{\bar r} E_1 \to Sym^{\bar r} E_1$ is the identity. Since $ \cM((\bar r,\dots,\bar r),0)$ is reduced, a similar argument shows that $(\bar\varphi_{\bar r})\circ (\eta_r)\colon \cM((\bar r,\dots,\bar r),0) \to \cM((\bar r,\dots,\bar r),0)$ is the identity as well.
\end{proof}

\begin{rem} \label{r:othercases} Arguing as in Proposition \ref{prop:RR} and Corollary \ref{c:isom}, one gets that the equivalences $\Phi$ and $\Psi$ in Section 1 induce isomorphisms
\begin{align*}
\cM((\bar r,\dots,\bar r),d) & \simeq \cM((d-\bar r,\dots,d-\bar r),-d)\qquad \text{for $d > \bar r$} \\
\cM((\bar r,\dots,\bar r),d) & \simeq \cM((\bar r-d,\dots,\bar r-d),d)\qquad \text{for $d \leq \bar r $} \\
\cM((\bar r,\dots,\bar r),d)& \simeq \cM((\bar r,\dots,\bar r),\bar r h+d)\, .
\end{align*}
Thus, for any integers $\lambda,\mu\in \mathbb{Z}$, one gets that the moduli spaces $\cM((\bar r,\dots,\bar r),d)$ where $d=\lambda h \bar r$ and $\cM((r_0,\dots,r_0),d_0)$ where $r_0=\mu\lambda \bar r\pm \bar r$ and $d_0=\pm\lambda h \bar r$ are also isomorphic to the $\bar r$-th symmetric product $ Sym^{\bar r} E_1$ of the nodal curve.
\end{rem}
Using the results obtained so far, we give a complete description of
all moduli spaces $\cM_X(r,d)$ of semistable sheaves on the curve
$E_2$  (cf.~Figure \ref{fig:e2}) with respect to a polarization of
the minimum possible degree $h=2$.
Note that if $h=2$, the case  $2r_0/h\leq d_0< r_0$
in Corollary \ref{c:isom} is not possible. One then has:

\begin{cor}\label{c:casoE2} Let $X$ be a curve of type $E_2$ with a polarization $H$ of degree $h=2$, and let $(r,d)$ be a pair of integers with $r\geq 0$. The moduli space $\cM_X(r,d)$ of semistable sheaves with Hilbert polynomial $P(s)=rs+d$ on $X$ is isomorphic either to the $d_0$-th symmetric power $Sym^{d_0}(X)$ of the curve or to $\cM(r_0,0)$. Moreover, if $r_0$ is even, then the  biggest connected component of $\cM(r_0,0)$ is isomorphic to the  symmetric power $Sym^{r_0/2} E_1$ of the nodal curve $E_1$.

\end{cor}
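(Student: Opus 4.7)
The plan is to derive both assertions directly from the results already established, without any new computation. For the first part, I would invoke Corollary \ref{c:isom}(2): when $h=2$, the range $2r_0/h\le d_0<r_0$ collapses to the empty condition $r_0\le d_0<r_0$, so only two cases survive, namely $\cM_X(r,d)\simeq\cM_X(0,d_0)\simeq Sym^{d_0}(X)$ with $d_0>0$, or $\cM_X(r,d)\simeq\cM_X(r_0,0)$ with $r_0>0$. This settles the first statement immediately.

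For the biggest connected component of $\cM_X(r_0,0)$, I would first use that $H$ is ample together with $h_1+h_2=h=2$ to force $h_1=h_2=1$. The multirank decomposition \eqref{e:des} then reads
\[
\cM_X(r_0,0)=\coprod_{r_1+r_2=r_0}\cM_X((r_1,r_2),0),
\]
with the union taken over pairs of non-negative integers. The dimension formula recorded in the corollary following Corollary \ref{c:JHfactors} asserts that each piece has dimension $\min(r_1,r_2)$; under the constraint $r_1+r_2=r_0$ with $r_0$ even, this quantity is uniquely maximised at $(r_0/2,r_0/2)$, so the biggest component is precisely $\cM_X((r_0/2,r_0/2),0)$.

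Finally, I would apply Theorem \ref{t:sym} with $\bar r=r_0/2$ to conclude
\[
\cM_X((r_0/2,r_0/2),0)\simeq Sym^{r_0/2} E_1,
\]
which is the asserted identification. Since the substantive work is entirely absorbed in Corollary \ref{c:isom} and Theorem \ref{t:sym}, I expect no genuine obstacle; the only mildly delicate point is confirming the uniqueness of the maximising multirank when $r_0$ is even, which is immediate from the strict inequality $\min(r_1,r_2)<r_0/2$ whenever $r_1\ne r_2$ with $r_1+r_2=r_0$.
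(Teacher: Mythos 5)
Your proposal is correct and follows essentially the same route as the paper: the first claim is exactly the observation that the third case of Corollary \ref{c:isom}(2) is vacuous when $h=2$, and the second claim is Theorem \ref{t:sym} applied to the component $\cM_X((r_0/2,r_0/2),0)$, with your appeal to the dimension formula merely making explicit why this is the biggest component. The only minor point is that Theorem \ref{t:sym} assumes $\bar r>1$, so for $r_0=2$ you should instead cite Proposition \ref{p:isocase1} (or the discussion preceding it), which gives $\cM_X((1,1),0)\simeq E_1=Sym^1E_1$.
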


\def\cprime{$'$}

\end{document}